\newcommand{\eps}{\varepsilon}
\newcommand{\E}{\mathbb{E}}
\newcommand{\Z}{\mathbb{Z}}
\newcommand{\R}{\mathbb{R}}
\newcommand{\N}{\mathbb{N}}
\newcommand{\var}[1]{\mathrm{Var}\left(#1\right)}
\newcommand{\x}{\boldsymbol{x}}
\newcommand{\y}{\boldsymbol{y}}
\newcommand{\uu}{\boldsymbol{u}}
\newcommand{\eexp}[1]{\exp\left\{#1\right\}}
\newcommand{\bvar}[1]{\mathrm{Var}\bigg(#1\bigg)}
\title{Quasi-Monte Carlo for unbounded integrands with importance sampling\thanks{Submitted to the editors DATE.
		\funding{This work of the second author was funded by the National Science Foundation of China (No. 720711119). And the third author was funded by the National Science Foundation of China (No. 12071154), Guangdong Basic and Applied Basic Research Foundation (No. 2021A1515010275). }}
  }
\author{Du Ouyang\thanks{Department of Mathematical Sciences, Tsinghua University, Beijing 100084, People's Republic of China (\email{oyd21@mails.tsinghua.edu.cn}).}\and
	 Xiaoqun Wang\thanks{Department of Mathematical Sciences, Tsinghua University, Beijing 100084, People's Republic of China (\email{wangxiaoqun@mail.tsinghua.edu.cn}).} \and Zhijian He\thanks{Corresponding author. School of Mathematics, South China University of Technology, Guangzhou 510641, People's Repulic of China (\email{hezhijian@scut.edu.cn}).}}
\begin{document}
	
	\maketitle
	
	% REQUIRED
	\begin{abstract}
		We consider the problem of estimating an expectation $ \E\left[ h(W)\right]$ by quasi-Monte Carlo (QMC) methods, where $ h $ is an unbounded smooth function on $ \R^d $ and $ W$ is a standard normal distributed random variable. To study rates of convergence for QMC on unbounded integrands, we use a smoothed projection operator  to project the output of $W$ to a bounded region, which differs from the strategy of avoiding the singularities along the boundary of the unit cube $ [0,1]^d $ in \cite{owen2006a}.
        The error is then bounded by the quadrature error of the transformed integrand and the projection error. If the function $h(\x)$ and its mixed partial derivatives do not grow too fast as the Euclidean norm $\lvert\bm x\rvert$ goes to infinity, we obtain an error rate of $O(n^{-1+\eps})$ for QMC and randomized QMC (RQMC) with a sample size $n$ and an arbitrarily small $\eps>0$. However, the rate turns out to be $O(n^{-1+2M+\eps})$ if the functions grow exponentially with a rate of $O(\exp\{M\lvert\bm x\rvert^2\})$ for a constant $M\in(0,1/2)$. Superisingly, we find that using importance sampling with t distribution as the proposal can improve the root mean squared error of RQMC from $O(n^{-1+2M+\eps})$ to $O( n^{-3/2+\eps})$ for any $M\in(0,1/2)$.
	\end{abstract}
	
	% REQUIRED
	\begin{keywords}
		Projection method, Growth condition, Quasi-Monte Carlo, Importance sampling
	\end{keywords} 
	
	% REQUIRED
	%\begin{AMS}
	%	41A63, 65D30, 97N40
	%\end{AMS}
	
\section{Introduction}\label{sec:intro}

Quasi-Monte Calro (QMC) is an efficient quadrature method to numerically solve the integral problems on the unit cube $ \left[ 0,1\right]^d$. Unlike the Monte Carlo (MC) method, the QMC method uses low-discrepancy sequences instead of  random sequences (see~\cite{caflisch1998,Niederreiter1992,caflisch1997,dick2010,owen2013}). If the integrand has bounded
variation in the sense of Hardy and Krause (BVHK), then by using the Koksma-Hlawka inequality, QMC with $n$ quadrature points yields a deterministic error of $O(n^{-1}(\log n)^{d})$, which is asymptotically faster than the MC rate $O(n^{-1/2})$.

In many problems of financial engineering and stochastic control, the underlying solutions can be formulated as expectations of the form $\E[h(W)]$ with respect to a standard normal distribution $W$ (see~\cite{hull2003,glasserman2004,zhang2020}). To estimate $\E[h(W)]$, QMC quadrature rule takes
\begin{equation}\label{eq:qmcapproximate}
 \widehat I_n(h) = \frac{1}{n}\sum_{j = 1}^{n} h\circ \Phi^{-1}(\boldsymbol{y}_j),
\end{equation}
where $\{\boldsymbol{y}_j\}_{j=1}^n$ is a low-discrepancy point set in $[0,1]^{d}$, $\Phi(x)$ is the cumulative distribution function (CDF) of $N(0,1)$ satisfying
\begin{equation}\label{eq:distribute}
    \Phi(x) = \int_{-\infty}^x \frac{1}{\sqrt{2\pi}} e^{-u^2/2} du,
\end{equation}
$\circ$ is the composite operator and $ \Phi^{-1}(\y)$ is the inverse of $\Phi$ acting on each component of the argument $\y$. 
Since unbounded functions cannot be BVHK, the Koksma-Hlawka inequality fails to provide the $O(n^{-1}(\log n)^{d})$ error rate for unbounded functions. This paper is devoted to providing comprehensive error analysis for \eqref{eq:qmcapproximate}, in which the integrand $h\circ \Phi^{-1}$ may have singularities along the boundary of the unit cube $[0,1]^d$.
\begin{comment}
The variation of $ h\circ \Phi^{-1}$ in the sense of Hardy and Krause is 
\begin{equation}\notag
    V_{\mathrm{HK}}(h\circ \Phi^{-1}) = \int_0^1 \bigg|\frac{\partial h\circ \Phi^{-1}(y)}{\partial y}\bigg| dy .
\end{equation}
Now we compute the integrand above

\begin{equation}\notag
    \begin{aligned}
        \frac{\partial h\circ \Phi^{-1}(y)}{\partial y} & = \frac{\partial h\circ \Phi^{-1}(y)}{\partial \Phi^{-1}(y)}\frac{\partial \Phi^{-1}(y)}{ \partial y }= h^{\prime}\cdot \frac{1}{\frac{1}{\sqrt{2\pi}} e^{-\frac{x^2}{2}}}\\
        & = h^{\prime} \cdot \sqrt{2\pi}e^{\frac{x^2}{2}} .
    \end{aligned}
\end{equation}
Therefore
\begin{equation}\notag
    \begin{aligned}
    V_{\mathrm{HK}}(h\circ \Phi^{-1}) &= \int_{-\infty}^{\infty} |h^{\prime}|\sqrt{2\pi}e^{\frac{x^2}{2}}\cdot \Phi^{\prime}(x) dx\\
    & = \int_{-\infty}^{\infty} |h^{\prime}(x)|dx .
    \end{aligned}
\end{equation}
In most cases, such as when $ h$ is a linear function, the above integral is infinite. So if applying Koksma-Hlawka inequality directly, we cannot prove that QMC is better than MC. However, the experiment results always show that QMC outperforms the MC even if the random variables are normal distributed.
\end{comment}
 
%引用文献

Owen~\cite{owen2006a} studied the QMC error for unbounded functions on $ \left( 0,1\right)^d$. He found that QMC error attains a rate of $O(n^{-1+\max_j A_j + \eps})$ if the integrand satisfies the boundary growth condition
\begin{equation}\notag
    \left| \partial^{\uu}f(\y)\right| \le B\prod_{j =1}^d\min\left( y_j,1-y_j\right)^{-A_j-\mathbf{1}_{j\in \uu}}
\end{equation}
for some $ A_j >0$, $ 0< B< \infty$, and all $ \uu\subseteq 1{:}d= \left\{ 1,\dots,d\right\}$, where $ \partial^{\uu}f(\y)$ denotes the mixed partial  derivative of $f(\y)$ with respect to $y_j$ with $j\in \uu$ and $\eps>0$ is arbitrarily small.
Moreover, randomized QMC (RQMC) method yields the same rate of $ O(n^{-1+\max_j A_j + \eps})$ for the mean error. Recently, He et al.~\cite{he2023} found that this rate also holds for root mean squared error (RMSE) when using scrambled nets (a commonly used RQMC method \cite{owen1995}). It is easy to see that when $A_j$ are all arbitrarily small (we call it the ``QMC-friendly" growth condition), the convergence rate achieves the optimal case $O(n^{-1+ \eps})$. In this article, we propose a projection based quasi-Monte Carlo (P-QMC) method and further refine the boundary growth conditions considered in Owen~\cite{owen2006a}. Using P-QMC or RQMC method, we obtain better convergence results for different boundary growth conditions. 

%Note that the boundary growth of the function in Owen~\cite{owen2006a} is very fast, so the convergence order depends on the factor $ \max_j A_j$. In Section \ref{sec:growth}, we will discuss this problem in detail. 
There are some related work on studying unbounded integrands in the context of QMC. Kuo et al.~\cite{kuo2006a} studied the problem of multivariate integration over $ \R^d$. They considered the case where the intergrand belongs to some weighted tensor product reproducing kernel Hilbert space and proved that good randomly shifted lattice rules can be constructed component by component to achieve a worst case error of order $ O(n^{-1/2})$. Moreover, Kuo et al.~\cite{kuo2010} improved the results by proving that a rate of convergence close to the optimal order $ O(n^{-1})$ can be achieved with an appropriate choice of parameters for the function
space. Based on this, Nichols and Kuo~\cite{kuo2014} extended the theory of Kuo et al.~\cite{kuo2010} in several non-trivial directions. %including a new error analysis for the CBC construction of lattice rules with general nonproduct weights, the introduction of an unanchored variant for the setting, the use of coordinate-dependent weight functions in the norm, and the strategy for fast CBC construction with POD weights.
In the above work, the constants in the big-$ O$ bounds can be independent
of dimension $ d$ under appropriate conditions on the weights of the function space. Nuyens and Suzuki~\cite{nuyens2023} introduced a method that scales lattice rules from the unit cube $ [0,1]^d$ to properly sized
boxes on $ \R^d$ and achieved higher-order convergence that matches the
smoothness of the integrand in a certain Sobolev space. Basu and Owen~\cite{basu2016b}
 studied three quadrature methods for integrands on the square
$ [0,1]^2$ that may become singular as the point approaches the diagonal line $ x_1 = x_2$.

Importance sampling (IS) is an efficient variance reduction method in the context of MC (see~\cite{glasserman1999b,kuk1999,owen2000,dick2019,zhang2021}). However, IS cannot improve the convergence rate of MC. It is natural to ask a question: ``Can IS accelerate the convergence rate in QMC?". He et al.~\cite{he2023} followed the framework of Owen~\cite{owen2006a} to show that using a proper IS in RQMC can retain the RMSE rate of $O(n^{-1+ \eps})$ under the ``QMC-friendly" growth condition. In our framework, by employing suitable IS to slow down the growth of the integrand, the error rate of RQMC is improved from $O(n^{-1+\max_j A_j+\eps})$ to $ O(n^{-3/2+\eps})$, in which the growth condition is not necessarily ``QMC-friendly". %And we also can deal with the boundary growth conditions that are not ``QMC-friendly" which requires the $ \max_j A_j$ is arbitrary small (see Definition in~\cite{owen2006a,he2023} or Section~\ref{sec:growth}).

A key strategy in \cite{owen2006a} is to employ an auxiliary function that has a low variation to approximate the unbounded integrand. However, the auxiliary function used in \cite{owen2006a} is not smooth enough so that it does not meet the smoothness requirement in Owen~\cite{owen2008} for establishing the $ O(n^{-3/2+\eps})$ error rate. Differently, the function constructed based on the projection method is smooth, allowing us to address this issue and obtain the desired rate of convergence when using IS.

In this paper, we prove that when the smooth integrand grows at rate of \\$ O(\eexp{M|\x|^k})$ with $ M >0 $ and $ 0<k<2$, the convergence rates of the P-QMC and RQMC methods is $ O(n^{-1+\eps})$. By Owen~\cite{owen2006a}, the convergence rate of RQMC turns out to be $ O(n^{-1+2M+\eps})$ if the integrand grows extremely fast with a rate of $ O(\eexp{M|\x|^2})$ for a constant $ 0<M<1/2$. However, we demonstrate that with an  appropriate IS, the convergence rate of RQMC improves from $O(n^{-1+2M+\eps}) $ to $ O(n^{-3/2+\eps})$. Our work theoretically establishes that suitable IS can accelerate the convergence rate of QMC. Some integrands after pre-integration (also known as conditioning) \cite{griewank2018high,zhang2020} arsing from option pricing and smooth loss functions in deep learning both satisfy our growth conditions. Leveraging the findings in this paper, we can obtain the convergence rates of the QMC methods for these problems.

%This paper is organized as follows. In Section 2, we introduce some fundamental concepts of QMC and RQMC methods. In Section 3, we discuss the function growth conditions in Owen~\cite{owen2006a} and define several specific function growth classes. In Section 4, we propose the P-QMC method and present its convergence results. In Section 5, we discuss the P-QMC and P-RQMC methods based on importance sampling and theoretically prove that the convergence rates would be significantly improved after applying importance sampling. In Section 6, numerical experiments are conducted to validate the theoretical results.
	 
The structure of this paper is as follows. Section~\ref{sec:pre} introduces the basic concepts of QMC and RQMC methods. Section~\ref{sec:growth} focuses on the function growth conditions as described in Owen~\cite{owen2006a}, and categorizes these into specific function growth classes. In Section~\ref{sec:pqmc}, we propose the P-QMC method and obtain the convergence results with respect to different growth classes. Section~\ref{sec:is} discusses the P-QMC and RQMC methods using importance sampling, demonstrating that the convergence rates can be significantly improved by using a proper IS. Section~\ref{sec:numerical} provides numerical experiments to confirm the theoretical results. Section~\ref{sec:conclusion} concludes the paper. We put lengthy but useful results about the operations of growth classes in Appendix.

\section{Preliminaries}\label{sec:pre} 

In this article, all norms that appear are Euclidean norms. In order to avoid ambiguity, bold symbols, such as $ \x$ and $ \y$, are used to represent vectors, and normal symbols are used to represent scalars. For example, we use $ \y_j$ to represent an element of a low-discrepancy point set, and use $ y_j$ to represent a  component of a $ d$-dimensional point $ \y$. Also, $ \x$ is always used to represent a point in $ \R^{d}$ and $ \y$ is always used to represent a point in $ [0,1]^d$. Denote $1{:}d = \left\{ 1,2,\dots,d\right\}.$ Let $ \uu $ be a subset of $1{:}d$, and $ |\uu|$ be the number of elements in $ \uu$. Let $\boldsymbol{a}_{\uu}{:}\boldsymbol{b}_{-\boldsymbol{u}}$ be  a vector in $ \R^d$ whose $ j$-th component is $ a_j$ if $ j \in \uu$ and $b_j$ otherwise.

	\subsection{Quasi-Monte Carlo methods}
	QMC method is a quadrature rule for approximating the integration of functions over the unit cube $[0,1]^d$. Instead of using random points in Monte Carlo method, QMC method uses deterministic low-discrepancy sequences.
The uniformity of a point set is measured by discrepancy defined below.
\begin{definition}
    For the point set $P = \{\y_1,\dots,\y_n\}$ in the unit cube $ [0,1]^d$, the star discrepancy of $P$ is defined as
    \begin{equation}\notag
        D^*_n(P) := \sup_{B\in \mathcal{B} }\left|\frac{1}{n}\sum_{j = 1}^n \mathbf{1}_B(\y_j) - \lambda(B)\right|,
    \end{equation}
    where the $\lambda(\cdot)$ is the Lebesgue measure and $\mathcal{B}$ is the family of all subintervals in $[0,1]^d$ of the form $\prod_{j =1}^d [0,u_j)\subset [0,1]^d$. 
\end{definition}

%Another important concept of QMC method is the variation in the sense of Hardy and Krause. For the convenience, we only introduce the definition for smooth functions. There are some notations that will be used later.

In this paper, we work on smooth functions, which are defined below.
\begin{definition}\label{def:smooth}
    
A function $f(\x)$ defined over $K\subseteq \R^d$ is called a \textbf{smooth function} if for any $ \uu \subseteq 1{:}d$, $ \partial^{\uu} f $ is continuous. Let $ \mathcal{S}^d(K)$ be the class of such smooth functions.
\end{definition}

For smooth functions, we have a brief definition for the variation in the sense of Hardy and Krause.
\begin{definition}\label{def:HK}
    If $ f \in \mathcal{S}^d([0,1]^d)$, then the variation of $f$ in the sense of Hardy and Krause is 
    \begin{equation}\notag
        V_{\mathrm{HK}}(f) = \sum_{\varnothing \ne \uu \subseteq 1{:}d}  \int_{\left[0,1\right]^d} \left|\partial^{\boldsymbol{u}}f\left( \boldsymbol{y_u}{:}\mathbf{1}_{-\boldsymbol{u}}\right)\right| d\boldsymbol{y}.
    \end{equation}
\end{definition}

The Koksma-Hlawka inequality \cite{Hlawka1972} provides an error bound for QMC quadrature rule, i.e., 
    \begin{equation}\notag
        \left|\frac{1}{n}\sum_{j = 1}^n f(\boldsymbol{y}_j) - \int _{[0,1]^d} f(\boldsymbol{y}) d\boldsymbol{y} \right| \le V_{\mathrm{HK}}(f) D^*_n\left(\{\boldsymbol{y}_1,\dots,\boldsymbol{y}_n\}\right).
    \end{equation}
Note that the error bound depends on the star discrepancy of the sequence used for QMC methods. There are several kinds of low-discrepancy sequences, such as Sobol' sequece, Halton sequence and Faure sequence (see~\cite{Niederreiter1992,glasserman2004}) with the star discrepancy of $
O(n^{-1}(\log n)^{d})$ for the first $n$ points. Therefore, if the function $f$ is BVHK, then QMC can attain a  rate of convergence $O(n^{-1+\eps})$, where $\eps>0$ is arbitrarily small for hiding the logarithm term. In this paper, we focus on digital nets.
	
\begin{definition}
    An elementary interval of $ [0,1)^{d}$ in base b is an interval of the form
    \begin{equation}\notag
        E = \prod_{j =1}^d \left[\frac{t_j}{b^{k_j}},\frac{t_j+1}{b^{k_j}}\right)
    \end{equation}
    for nonnegative integers $ k_j$ and $ t_j<b^{k_j}$.
\end{definition}
	
\begin{definition}\label{def:tmd-net}
    Let $\lambda,t,m$ be integers with $ m\ge 0,\ 0\le t\le m$, and $ 1\le \lambda < b$. A point set $ \left\{ \y_j\right\}$ of $ \lambda b^m$ points is called a $ \left( \lambda,t,m,d\right)$-net in base b if every elementary interval in base $ b$ of volume $ b^{t-m}$ contains $ \lambda b^{t}$ points of the point set and no elementary interval in base b of volume $ b^{t-m-1}$ contains more than $ b^{t}$ points of the point set.
\end{definition}	

If $ \lambda = 1$, then we use the notation $ (t,m,d)$-net instead. Every $ (t,m,d)$-net in base $ b\ge 2$ is a low-discrepancy point set with $ n = b^m$, whose star discrepancy becomes $
O(n^{-1}(\log n)^{d-1})$. We refer to Niederreiter~\cite{Niederreiter1992}  for more details.
%\subsection{Randomized quasi-Monte Carlo methods}

Owen~\cite{owen1995} provided a scrambling method to randomize the QMC points, resulting in a kind of RQMC methods. The randomized points satisfies $ \y_j \sim U[0,1]^d$ and retain the net property. Owen~\cite{owen1997b,owen2008} showed that the scrambled net achieves a convergence rate of $ O(n^{-3/2+\eps})$ for smooth integrands, which is better than the rate $ O(n^{-1+\eps})$ for the unscrambled net. The following two propositions are taken from Owen~\cite{owen1997b}.
\begin{proposition}\label{prop:scr1}
    If $ \left\{ \boldsymbol{a}_j\right\}$ is a $ (\lambda,t,m,d)$-net in base b and $ \left\{ \y_j\right\}$ is the scrambled version of $ \left\{ \boldsymbol{a}_j\right\}$, then $ \left\{ \y_j\right\}$ is a $ (\lambda,t,m,d)$-net in base b with probability $ 1$.
    
\end{proposition}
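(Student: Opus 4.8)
The plan is to show that Owen's nested scrambling acts on the family of elementary intervals of any fixed resolution as a measure-preserving permutation, so that the purely combinatorial counting conditions defining a $(\lambda,t,m,d)$-net are left invariant. Recall that scrambling operates coordinatewise on base-$b$ digits: if $a_{jk}=\sum_{i\ge 1}a_{jki}b^{-i}$ is the $k$-th coordinate of $\boldsymbol{a}_j$, then the scrambled coordinate is $y_{jk}=\sum_{i\ge 1}\tilde a_{jki}b^{-i}$, where the first digit is permuted by a random permutation $\pi_k$ of $\{0,\dots,b-1\}$ and the $i$-th digit is permuted by a random permutation $\pi_{k;a_{jk1}\cdots a_{jk(i-1)}}$ depending on the preceding $i-1$ digits, all permutations being mutually independent and uniform. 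First I would record the standard technical point underlying the ``with probability $1$'' qualifier: for each coordinate with a terminating base-$b$ expansion, the infinitely many independent permutations applied to its trailing zeros make the scrambled coordinate a non-$b$-adic-rational number almost surely; hence, with probability $1$, no scrambled point lies on the boundary of any elementary interval, and membership of $y_{jk}$ in a half-open base-$b$ interval is determined unambiguously by its leading digits.

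The key step is a structural lemma. Fix a resolution vector $\boldsymbol{c}=(c_1,\dots,c_d)$ and consider the elementary intervals $E=\prod_{k=1}^d[t_k b^{-c_k},(t_k+1)b^{-c_k})$. Membership $\boldsymbol{x}\in E$ depends only on the first $c_k$ base-$b$ digits of $x_k$ for each $k$. Because the nested permutation applied to the $i$-th digit of coordinate $k$ depends only on digits $1,\dots,i-1$ of that coordinate, the map sending the digit prefix $(a_{jk1},\dots,a_{jkc_k})$ to its scrambled prefix $(\tilde a_{jk1},\dots,\tilde a_{jkc_k})$ is, for each realization of the relevant finitely many permutations, a bijection of the set of length-$c_k$ prefixes onto itself. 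Consequently the scrambling map $\Gamma\colon\boldsymbol{x}\mapsto\Gamma(\boldsymbol{x})$ carries each elementary interval $E$ of resolution $\boldsymbol{c}$ onto another elementary interval $E'$ of the same resolution (hence the same volume), and $E\mapsto E'$ is a permutation of the finite family of all such intervals. In particular $\boldsymbol{a}_j\in E$ if and only if $\y_j=\Gamma(\boldsymbol{a}_j)\in E'$.

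Granting this lemma, the conclusion follows by counting. Both defining conditions of Definition~\ref{def:tmd-net} are phrased solely through the numbers of points contained in elementary intervals of prescribed volume. For any volume $b^{t-m}$ the relevant intervals are exactly those with $\sum_k c_k=m-t$; for each such resolution $\boldsymbol{c}$ the lemma shows that the collection of point counts is merely permuted by scrambling, so if every count equals $\lambda b^t$ before scrambling it still equals $\lambda b^t$ afterward. Likewise, for volume $b^{t-m-1}$, i.e.\ $\sum_k c_k=m-t+1$, the maximal count over intervals of each resolution is preserved, so the bound ``no more than $b^t$ points'' is retained. Taking the intersection over the finitely many relevant resolutions, together with the measure-zero boundary exception, yields the net property with probability $1$.

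The main obstacle is the structural lemma, specifically verifying that the nested dependence of the digit permutations does not break the prefix bijection: one must check that two distinct length-$c_k$ prefixes cannot be carried to the same scrambled prefix, which hinges on each $\pi_{k;\cdots}$ being itself a bijection indexed by exactly the preceding digits. A secondary care is needed to make the ``probability $1$'' precise, ensuring that the only excluded realizations are those placing a scrambled point on an interval boundary, an event of probability zero.
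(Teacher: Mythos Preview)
Your argument is correct and is essentially the standard proof due to Owen: nested uniform scrambling induces, at each fixed resolution vector $\boldsymbol{c}$, a bijection of the family of elementary intervals onto itself, so the multiset of point counts in intervals of any prescribed volume is merely permuted, and both clauses of Definition~\ref{def:tmd-net} are therefore inherited; the ``with probability~$1$'' caveat disposes of the measure-zero event that a scrambled coordinate lands on a $b$-adic rational.

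There is nothing to compare against in this paper, however: the authors do not supply their own proof of Proposition~\ref{prop:scr1} but simply quote it from Owen~\cite{owen1997b} (see the sentence immediately preceding the proposition). Your write-up is a faithful reconstruction of Owen's original argument, so it is consistent with---indeed, it fills in---what the paper only cites.
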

	
\begin{proposition}\label{prop:scr2}
    Let $ \boldsymbol{a}$ be a point in $ [0,1]^{d}$ and $ \y$ is the scrambled version of $ \boldsymbol{a}$. Then $ \y$ has the uniform distribution on $ \left[ 0,1\right]^d$.
\end{proposition}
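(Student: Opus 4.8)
The plan is to reduce the multivariate claim to a one-dimensional statement about the base-$b$ digits of a scrambled scalar, exploiting that Owen's nested uniform scramble acts independently on each of the $d$ coordinates. Thus it suffices to show that if $a\in[0,1]$ is a fixed scalar with base-$b$ expansion $a=\sum_{k\ge 1}a_k b^{-k}$ with $a_k\in\{0,\dots,b-1\}$, and $y=\sum_{k\ge 1}y_k b^{-k}$ is its scramble, then $y\sim U[0,1]$; the full result then follows by forming the product of $d$ independent uniform coordinates.

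First I would recall the construction of the scrambled digits. The scramble assigns to each finite digit-prefix a uniformly random permutation of $\{0,\dots,b-1\}$, all chosen mutually independently, and sets $y_1=\pi_{\varnothing}(a_1)$ and $y_k=\pi_{a_1\cdots a_{k-1}}(a_k)$ for $k\ge 2$. The central observation is that for the \emph{fixed} input $a$, the permutations that actually act along its digit path, namely $\pi_{\varnothing},\pi_{a_1},\pi_{a_1a_2},\dots$, sit at distinct nodes of the scrambling tree and are therefore independent. Since a uniformly random permutation sends any fixed symbol to a uniformly random symbol, each $y_k$ is uniform on $\{0,\dots,b-1\}$; and because distinct digits are produced by distinct, independent permutations, the sequence $(y_k)_{k\ge 1}$ is i.i.d.\ uniform on $\{0,\dots,b-1\}$.

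Next I would invoke the elementary fact that a real number whose base-$b$ digits are i.i.d.\ uniform on $\{0,\dots,b-1\}$ is distributed as $U[0,1]$. This can be verified by checking that such a $y$ lands in each $b$-adic elementary interval $[t b^{-m},(t+1)b^{-m})$ with probability $b^{-m}$, since only the first $m$ digits matter and each contributes an independent factor $1/b$; as these intervals generate the Borel $\sigma$-algebra on $[0,1]$, the law of $y$ coincides with Lebesgue measure. Applying this argument coordinatewise and using the independence of the scrambles across the $d$ coordinates then yields $\y\sim U[0,1]^d$.

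The step I expect to require the most care is the independence and uniformity of the digit sequence: one must argue precisely that conditioning on the fixed input $a$ selects a single path through the scrambling tree, so that exactly one permutation per level is consulted and these permutations are distinct, hence independent, random objects. A minor technical point is the non-uniqueness of base-$b$ expansions at $b$-adic rationals; since that set has Lebesgue measure zero it does not affect the distribution of $y$, but I would fix the expansion convention consistently so that the digit map is well defined.
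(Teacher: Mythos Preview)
Your argument is correct and is essentially the standard proof of this fact, as given (for instance) in Owen's original papers on scrambled nets. Note, however, that the present paper does not prove this proposition at all: it simply quotes it from Owen~\cite{owen1997b} (see the sentence ``The following two propositions are taken from Owen~\cite{owen1997b}'' immediately preceding Propositions~\ref{prop:scr1} and~\ref{prop:scr2}). So there is no ``paper's own proof'' to compare against; you have supplied a valid proof where the paper gives only a citation. Your identification of the one subtle step---that for a \emph{fixed} input $a$ the permutations consulted lie at distinct nodes of the scrambling tree and are therefore independent, making the output digits i.i.d.\ uniform---is exactly the crux of Owen's argument, and your handling of the $b$-adic expansion ambiguity is appropriate.
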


\section{Growth conditions}\label{sec:growth}
To drive the convergence rate of RQMC method for unbounded functions over the unit cube $ (0,1)^d$, Owen~\cite{owen2006a} introduced the boundary growth condition for smooth functions defined on $ (0,1)^{d}$,
\begin{equation}\label{eq:owencondition}
    \left| \partial^{\uu}f(\y)\right| \le B\prod_{j =1}^d\min\left( y_j,1-y_j\right)^{-A_j-\mathbf{1}_{j\in \uu}}
\end{equation}
for some $ A_j >0$, $ 0< B< \infty$, and all $ \uu\subseteq 1{:}d$.
By compositing the inverse distribution function, our target integrand is then $ f(\y) = h\circ \Phi^{-1}(\y)$, which is defined in $ (0,1)^{d}$. Therefore, the growth condition for function $ h$ is 
\begin{equation}\label{eq:owengrowth2}
    \bigg|\partial^{\uu}\left(h\circ \Phi^{-1}\right) (\y)\bigg| \le B\prod_{j =1}^d \min\left( y_j,1-y_j\right)^{-A_j - \mathbf{1}_{j\in \uu}}.
\end{equation}

Since $\Phi$ is a specific function, we now work out an equivalent condition on $h(\boldsymbol{y})$ for ensuring \eqref{eq:owengrowth2}.
Let $ \x = \Phi^{-1}(\y)$. Then the left hand side of~\eqref{eq:owengrowth2} is 
\begin{equation}\notag
    \bigg|\partial^{\uu}\left(h\circ \Phi^{-1}\right) (\y)\bigg| = \bigg|\partial^{\uu}h(\x)\prod_{j\in \uu} \frac{d\Phi^{-1}(y_j)}{dy_j}\bigg| = \bigg|\partial^{\uu}h(\x)\prod_{j\in \uu} \frac{1}{\varphi(x_j)}\bigg|,
\end{equation}
where $ \varphi(x) = \frac{1}{\sqrt{2\pi}}\eexp{-x^2/2}$ is the Gaussian probability density. Since
\begin{equation}\notag
\begin{aligned}
     \min\left( y_j,1-y_j\right) &= \min\left\{ \Phi(x_j),1-\Phi(x_j)\right\} = 1-\Phi(|x_j|),
\end{aligned}
\end{equation}
the inequality \eqref{eq:owengrowth2} is equivalent to
\begin{equation}\notag
    \bigg|\partial^{\uu}h\left( \x\right)\bigg| \le B \prod_{j=1}^d\left( 1-\Phi(|x_j|)\right)^{-A_j - \mathbf{1}_{j\in \uu}} \prod_{j \in \uu} \varphi(x_j): = G(\x).
\end{equation}
Note that for any $ \eps >0$, if $ |x_j|$ is large enough then 
\begin{equation}
    \eexp{x_j^2/2} \le \left( 1-\Phi(|x_j|)\right)^{-1} \le \eexp{(1+\eps)x_j^2/2} \notag.
\end{equation}
Consequently, for any $ \eps > 0$, there exist $ B_1$ and  $ B_2$ such that
\begin{align}\label{eq:owencondition_in_R}
    B_1\eexp{\sum_{j=1}^d \frac{A_j}{2}x_j^2} \le G(\x) \le B_2 \eexp{\sum_{j=1}^d\frac{A_j+\eps}{2}x_j^2}.
\end{align}

As a result, the inequality \eqref{eq:owengrowth2} implies $|\partial^{\uu}h( \x)|=O(\exp\{\sum_{j=1}^d\frac{A_j+\eps}{2}x_j^2\})$. On the other hand, if $|\partial^{\uu}h( \x)|=O(\exp\{\sum_{j=1}^d\frac{A_j}{2}x_j^2\})$, then the inequality \eqref{eq:owengrowth2} holds, leading to a mean error rate of $ O(n^{-1+\max_j A_j+\eps})$ for RQMC \cite{owen2006a}. To achieve the convergence rate $ O(n^{-1+\eps})$, all $ A_j$ must be arbitrarily small. This leads to the definition of the ``QMC-friendly" condition.

\begin{definition}
    We say that $ h(\boldsymbol{x})$ satisfies the ``QMC-friendly" condition if for any fixed $M>0$, there exists $ B>0 $ such that $$\sup_{\uu \subseteq 1{:}d} \left|\partial^{\boldsymbol{u}}h(\boldsymbol{x})\right| \le Be^{M|\x|^2}.$$ 
\end{definition}

%As a result, if $ h$ satisfies the ``QMC-friendly" condition, it cannot grow faster than some functions like $ \eexp{M|x|^k}$ where $ k<2$ and $ M>0$. 

We next refine the growth conditions in Owen~\cite{owen2006a} and define some slower-growing function classes (relatively smaller), and use the P-QMC method to obtain better convergence results. 

\begin{definition}\label{def:growth class}
For $ M>0,\ B>0$ and $ k>0$, define polynomial growth class,
    \begin{equation}\notag
        G_p(M,B,k):= \left\{h \in \mathcal{S}^d(\R^d)  :\sup_{\uu \subseteq 1{:}d} \left|\partial^{\boldsymbol{u}}h(\boldsymbol{x})\right| \le M|\x|^k+B\right\} ,
    \end{equation}
    and define exponential growth class,
    \begin{equation}\notag
        G_e(M,B,k):= \left\{h\in \mathcal{S}^d(\R^d) :\sup_{\uu \subseteq 1{:}d} \left|\partial^{\boldsymbol{u}}h(\boldsymbol{x})\right| \le Be^{M|\x|^k}\right\},
    \end{equation}
    where $\mathcal{S}^d(\R^d)$ is the class of smooth functions given in Definition~\ref{def:smooth}.
    We say that $h$ has {\bf polynomial growth} if there exists $M>0, B>0, k>0$ such that $h\in G_p(M,B,k)$, and $h$ has {\bf exponential growth of order $ k$} if there exists $M>0, B>0$ such that $h \in G_e(M,B,k)$. 
\end{definition}

When the order $ k =2$, there are functions in the growth class that grow too fast to satisfy the ``QMC-friendly" condition. This leads to the definition of fast growth class.
\begin{definition}\label{def:fast}
    If $ M < 1/2,\ B>0 $, then we call $ G_e(M,B,2) $ the \textbf{fast growth class}. We say that $ h$ has  \textbf{fast growth}, if $ h$ belongs to the fast growth class.
\end{definition}

Note that in the definition of fast growth class, we have reached the minimum restriction on $ M$. Because if $ M\ge 1/2$, there are cases where the integral is infinity. And when $ M \ge 1/4$, the variance can be infinity, therefore, the MC method does not converge.

 The growth conditions we have defined are a further subdivision of Owen's growth condition. If $ h$ satisfies Owen's growth condition~\eqref{eq:owengrowth2}, then there are three situations,
\begin{enumerate}[label = \Roman*),align = left]
  \item $ h$ has polynomial growth;\label{condition:poly}
  \item $ h$ has exponential growth of order $ k<2$;\label{condition:exp}
  \item $ h$ has fast growth.\label{condition:fast}
\end{enumerate}

Note that \ref{condition:poly} and \ref{condition:exp} constitute the ``QMC-friendly" condition. With the help of the projection operator defined later, if $ h$ satisfies condition \ref{condition:poly} or \ref{condition:exp}, P-QMC and RQMC method will yield better results. 

In option pricing, under the well known Black-Scholes model \cite{glasserman2004}, the payoff functions can be written as 
\begin{equation}\notag
    h(W) = f(\eexp{AW}),
\end{equation}
where $ A \in \R^{d\times d}$ is a constant matrix. If $ f$ and its derivatives up to order $ d$ have polynomial growth, then by Theorem~\ref{them:composition} in Appendix, we obtain $ h \in G_e(M^{\prime},B,1)$ for some $ M^{\prime} >0$ and $ B>0$. However, the function $f$ usually has kinks or jumps in option pricing. To reclaim the efficiency of QMC, it was suggested to use the conditioning method to smooth the integrand to obtain a new function  $ f \in C^{d-1}(\R^{d-1})$. We refer to \cite{griewank2018high,zhang2020} for more details. From this perspective, our findings in this paper can be applied  for many problems in financial engineering.

Now, we focus on the situation where $ h$ satisfies the fast growth. By relation~\eqref{eq:owencondition_in_R}, if $ h \in G_e(M,B,2) $ then $ h $ satisfies~\eqref{eq:owengrowth2} with $ \max_j A_j = 2M$, and therefore, by the results of Owen~\cite{owen2006a}, RQMC achieves a mean error rate of $ O\left( n^{-1+2M+\eps}\right)$. From this standpoint, $ M<1/2$ is a very weak constraint; otherwise, it would not converge. In Section~\ref{sec:is}, we will prove that with a suitable IS, the P-QMC method achieves the convergence rate of $ O(n^{-1+\eps}) $ and the RQMC method achieves the convergence rate of $ O(n^{-3/2+\eps})$ for $ M <1/2$. As a result, we can conclude that using IS accelerates the convergence rate of QMC.

\section{The P-QMC method}\label{sec:pqmc}

In this section, we propose a method, called projection based quasi-Monte Carlo method (P-QMC). After composite the projection operator and inverse distribution function, the modified integrand defined on the unit cube $ \left[ 0,1\right]^d$ is of BVHK, and therefore error analysis can be achieved when using QMC method to the modified integrand.

\subsection{Projection operator}
We introduce the concept of a projection operator, which projects the $\R^d$ space to a bounded region $ \left[ -R,R\right]^d$ for a constant $ R>0$.
\begin{definition}\label{def:projection}
    If $x \in \R $, for any $R > 0 $, we define the projection operator $\widehat P_R: \R \rightarrow \R $ as 
    \begin{equation}\notag
        \widehat P_R(x) = {\rm arg}\min_{y \in [-R,R]} |y-x| = \left\{ \begin{array}{cl}
            x, & x \in \left[ -R,R\right] \\
            R, & x > R\\
            -R, & x< -R
        \end{array}\right..
    \end{equation}
    If $\x = \left( x_1,\dots,x_d\right) \in \R^d$, then $\widehat P_R :\R^d\rightarrow\R^d$ acts on each component of $\x$, i.e., $\widehat P_R(\x)$ $=$ $(\widehat P_R(x_1),\ldots,\widehat P_R(x_d))$.
\end{definition}

From the definition of $\widehat P_R$, we can see that $\widehat P_R$ is a function from $\R^d$ to the cube $[-R,R]^d$ and that it projects each component of a vector in $ \R^d$ from $ \R $ to $ [-R,R]$. The operator $\widehat P_R$ is continuous but not differentiable at some points. We next modify $\widehat P_R$ to make it smooth. The following lemma gives us a modification for the one-dimensional case, and the multidimensional case can be defined in the component-wise way.
\begin{lemma}\emph{(One-dimensional case)}\label{lemma:projection}
    For any $\eps\in (0,R)$, there exists a modification of the projection operator $\widehat P_R$, denoted as $P_R$ , which is defined on $[-\infty,\infty]$ and satisfies
    \begin{enumerate}[label=\roman*),align = left]
        \item $P_R(x) = \widehat P_R(x) = x \ \ for\  x \in [-R+\eps,R-\eps]$;\label{condition:proj1}
        \item $P_R $ has continuous derivative of order 1, i.e., $P_R \in C^1(\R)$;\label{condition:proj2}
        \item $|\frac{d P_R(x)}{dx}|\le \mathbf{1}_{\{|x|\le R\}}$;\label{condition:proj3}
        \item $|x|\mathbf{1}_{\{|x|\le R-\eps\}}+(R-\eps)\mathbf{1}_{\{|x|> R-\eps\}}\le |P_R(x)| \le |x|\mathbf{1}_{\{|x|\le R\}}+ R\mathbf{1}_{\{|x| > R\}}$;\label{condition:proj4}
        \item $ P_R(\infty) = \lim_{x\rightarrow \infty} P_R(x) $ and $ P_R(-\infty) = \lim_{x\rightarrow -\infty} P_R(x) $.\label{condition:proj5}
    \end{enumerate}
\end{lemma}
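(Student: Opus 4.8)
The plan is to prescribe the derivative of $P_R$ rather than the function itself, which turns all five requirements into elementary statements about a single auxiliary ``plateau'' function. Concretely, I would first fix a continuous even function $\rho\colon\R\to[0,1]$ with $\rho(t)=1$ for $|t|\le R-\eps$, $\rho(t)=0$ for $|t|\ge R$, and $0\le\rho(t)\le 1$ everywhere; any continuous interpolation on the two transition intervals $[R-\eps,R]$ and $[-R,-R+\eps]$ works (for definiteness one may take the linear ramp $\rho(t)=(R-|t|)/\eps$ there). Such a $\rho$ exists by inspection. I would then define
\begin{equation}\notag
    P_R(x)=\int_0^x \rho(t)\,dt ,\qquad P_R(\pm\infty)=\pm\int_0^{R}\rho(t)\,dt,
\end{equation}
and check that this $P_R$ has all the stated properties. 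Because $\rho$ is even, $P_R$ is odd, so it suffices to verify everything for $x\ge 0$ and then invoke $|P_R(x)|=P_R(|x|)$.

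The verifications I would carry out are as follows. For \ref{condition:proj1}, on $[-R+\eps,R-\eps]$ the integrand is identically $1$, so $P_R(x)=x=\widehat P_R(x)$. Property \ref{condition:proj2} is the fundamental theorem of calculus: since $\rho$ is continuous, $P_R\in C^1(\R)$ with $P_R'=\rho$. Property \ref{condition:proj3} is then immediate, as $|P_R'(x)|=\rho(x)\le 1=\mathbf{1}_{\{|x|\le R\}}$ for $|x|\le R$ and $\rho(x)=0=\mathbf{1}_{\{|x|\le R\}}$ for $|x|>R$. Property \ref{condition:proj5} holds by construction, the limits being the finite numbers $\pm\int_0^R\rho$, since $\rho$ is supported in $[-R,R]$.

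The only part needing genuine case analysis is \ref{condition:proj4}, the two-sided bound on $|P_R(x)|=P_R(x)$ for $x\ge 0$, and this is where I expect the bookkeeping (rather than any real difficulty) to concentrate. I would split into three regions using $0\le\rho\le 1$: for $0\le x\le R-\eps$ one has $P_R(x)=x$, matching both bounds; for $R-\eps<x\le R$, writing $P_R(x)=(R-\eps)+\int_{R-\eps}^x\rho$ gives $R-\eps\le P_R(x)\le x$ because $0\le\int_{R-\eps}^x\rho\le x-(R-\eps)$; and for $x>R$, since $\rho$ vanishes beyond $R$ we get $P_R(x)=(R-\eps)+\int_{R-\eps}^{R}\rho$, where the integral lies in $[0,\eps]$ (its integrand is at most $1$ on an interval of length $\eps$), whence $R-\eps\le P_R(x)\le R$. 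These are exactly the upper and lower envelopes in \ref{condition:proj4}. The crucial quantitative input throughout is simply the sandwich $0\le\rho\le 1$ together with $\operatorname{supp}\rho\subseteq[-R,R]$; no smoothness beyond continuity of $\rho$ is required, since only $C^1$ regularity of $P_R$ is claimed.
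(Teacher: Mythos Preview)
Your proposal is correct and essentially coincides with the paper's proof: the paper writes down an explicit piecewise formula for $P_R$ (constant outside $[-R,R]$, identity on $[-R+\eps,R-\eps]$, quadratic on the two transition intervals) and simply asserts the five properties, whereas you prescribe the derivative $\rho$ and integrate, which with your linear ramp choice $\rho(t)=(R-|t|)/\eps$ reproduces exactly the paper's quadratic pieces. Your derivative-first framing makes the verification of \ref{condition:proj2}--\ref{condition:proj4} slightly more transparent and shows that any continuous plateau $\rho$ would do, but the construction and the level of argument are the same.
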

\begin{proof}
    There are many constructive methods for proving this lemma. For instance, we adopt a quadratic function to replace the part that $P_R \ne \widehat P_R$, so that the piecewise function constructed is smooth. Define $P_R$ as:
    \begin{equation}\label{eq:smoothprojection}
        P_R(x) = \left\{\begin{array}{ll}
            -R+\frac{\eps}{2}, \ &\ x\in [-\infty,-R]\\
            \frac{1}{2\eps}x^2 + \frac{R}{\eps}x +\frac{(R-\eps)^2}{2\eps},
            \ &\ x\in(-R,-R+\eps)\\
            x,\ &\ x\in [-R+\eps,R-\eps]\\
            -\frac{1}{2\eps}x^2 + \frac{R}{\eps}x - \frac{(R-\eps)^2}{2\eps},\ &\ x\in (R-\eps,R)\\
            R-\frac{\eps}{2},\ &\ x\in [R,\infty]
        \end{array}\right. .
    \end{equation}
    It is easy to verify that the $P_R$ satisfies the conditions in Lemma~\ref{lemma:projection}.
\end{proof}
\begin{figure}[htbp]
  \centering
  \includegraphics[width=0.7\textwidth]{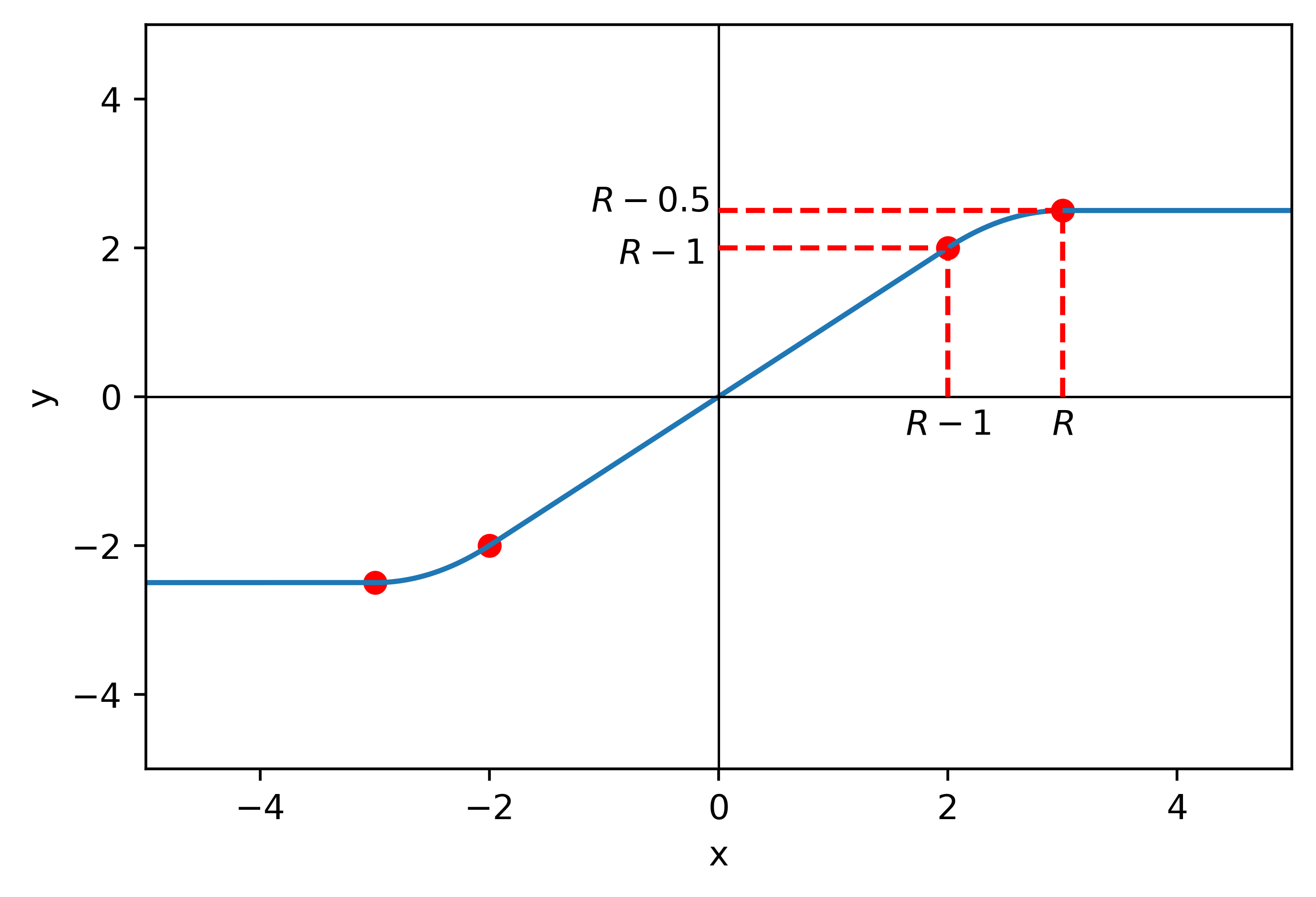}
  \caption{One-dimensional smoothed projection operator with $\eps = 1$}
  \label{fig:projection}
\end{figure}

From now on, we will call $ P_{R}$ the \textbf{projection operator} if each component of $ P_R $ satisfies the conditions in Lemma~\ref{lemma:projection} and with a slight abuse of terminology, call $ R $ the \textbf{projection radius}. For convenience, we take $\eps = 1 $ in Lemma~\ref{lemma:projection} and drop the dimension notation if there is no misunderstanding.  Figure~\ref{fig:projection} shows the one-dimensional smoothed projection operator with $\eps = 1$.

\begin{proposition}\label{prop:projection}
    Denote $ H = \left[ -R+1,R-1\right]^d$. If $ \x \in H$, then $ \x = P_R(\x)$. If $ \x \in \R^d \setminus H$, then for any point $ \boldsymbol{\xi}$ lies on the line segment connecting points $ \x$ and $ P_R(\x)$, we have $ |\boldsymbol{\xi}| \ge R-1$.
\end{proposition}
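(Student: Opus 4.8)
The plan is to prove the two assertions separately. The first is immediate: if $\x \in H = [-R+1,R-1]^d$, then each coordinate satisfies $x_j \in [-R+1,R-1]$, so with $\eps = 1$ part~\ref{condition:proj1} of Lemma~\ref{lemma:projection} gives $P_R(x_j) = x_j$ for every $j$, whence $P_R(\x) = \x$.

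For the second assertion I would parametrize the segment as $\boldsymbol{\xi}(t) = (1-t)\x + t\,P_R(\x)$ for $t \in [0,1]$ and argue coordinate by coordinate. The core claim is that, for each $j$, $|\xi_j(t)| \ge |P_R(x_j)|$. This rests on two properties of the one-dimensional operator: it shrinks magnitudes, $|P_R(x)| \le |x|$ (which follows from the upper bound in part~\ref{condition:proj4} of Lemma~\ref{lemma:projection}, since $R \le |x|$ on $\{|x| > R\}$), and it preserves sign, i.e.\ $P_R(x)$ lies between $0$ and $x$ (which is evident from the explicit construction~\eqref{eq:smoothprojection}, as $P_R$ is odd and nondecreasing with $P_R(0)=0$). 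Granting these, $\xi_j(t)$ is a convex combination of the same-signed numbers $x_j$ and $P_R(x_j)$ with $|P_R(x_j)| \le |x_j|$, so it lies in the closed interval between them and therefore $|\xi_j(t)| \ge |P_R(x_j)|$.

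Next I would use the hypothesis $\x \notin H$ to extract a coordinate $j_0$ with $|x_{j_0}| > R-1$. For that coordinate, the lower bound in part~\ref{condition:proj4} of Lemma~\ref{lemma:projection} gives $|P_R(x_{j_0})| \ge R-1$, so combining with the coordinate-wise claim yields $|\xi_{j_0}(t)| \ge |P_R(x_{j_0})| \ge R-1$. Since the Euclidean norm dominates any single coordinate, $|\boldsymbol{\xi}(t)| \ge |\xi_{j_0}(t)| \ge R-1$ for every $t \in [0,1]$, which is the desired bound for all $\boldsymbol{\xi}$ on the segment.

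The main obstacle is the coordinate-wise inequality $|\xi_j(t)| \ge |P_R(x_j)|$: it is the only place where one must invoke more than the magnitude bounds of Lemma~\ref{lemma:projection}, namely the sign-preservation of $P_R$. I expect to justify sign-preservation directly from the explicit piecewise formula~\eqref{eq:smoothprojection} rather than from the abstract conditions, since those conditions control only $|P_R|$ and $|P_R'|$ and do not by themselves pin down the sign of $P_R(x)$.
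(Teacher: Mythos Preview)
Your proposal is correct and follows essentially the same route as the paper: parametrize the segment, pick a coordinate $j_0$ with $|x_{j_0}|>R-1$, use that $x_{j_0}$ and $P_R(x_{j_0})$ have the same sign to get $|\xi_{j_0}|\ge R-1$, and bound the Euclidean norm from below by that coordinate. The only difference is cosmetic: the paper writes the convex combination directly as $(1-\lambda)|x_{j_0}|+\lambda|P_R(x_{j_0})|\ge R-1$ (using both $|x_{j_0}|\ge R-1$ and $|P_R(x_{j_0})|\ge R-1$) and simply asserts the same-sign property, whereas you phrase it via the intermediate inequality $|\xi_{j_0}(t)|\ge|P_R(x_{j_0})|$ and are more explicit that sign preservation must be read off the formula~\eqref{eq:smoothprojection} rather than the abstract conditions of Lemma~\ref{lemma:projection}.
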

\begin{proof}
    It suffices to prove the second statement. There is a $ 0\le \lambda \le 1$, such that $ \boldsymbol{\xi} = (1-\lambda)\x + \lambda P_R(\x)$. Let $ \boldsymbol{\xi} = (\xi_1,\dots,\xi_d)$. For any $ \x \in \R^d \setminus \left[ -R+1,R-1\right]^d$, there is an index $j\in 1{:}d$ such that $ |x_j| \ge |P_R(x_j)| \ge R-1$. Therefore, $$ |\boldsymbol{\xi}| \ge |\xi_j|  = |(1-\lambda)x_j + \lambda P_R(x_j)| = (1-\lambda)|x_j| + \lambda |P_R(x_j)|\ge R-1 ,$$
    where the second equality holds because $ x_j$ and $ P_R(x_j)$ have the same sign.
\end{proof}

%Further more, assume that the projection radius $ R$ is large enough so that no errors occur in the following arguments.

\subsection{Error analysis for the P-QMC method}

In this section, we discuss how to modify the integrand through the projection operator $ P_R$ to achieve BVHK and prove the corresponding convergence property. To this end, we modify the integrand $h$ by compositing the projection operator $P_R$, and the integrand changes from $ h\circ \Phi^{-1}$ to the modified integrand $ h\circ P_R \circ \Phi^{-1}$. The following lemma shows that the modified integrand is a smooth function on $ [0,1]^d$.
\begin{lemma}\label{lemma:smooth}
    If $ h\in \mathcal{S}^d(\R^d)$, then the modified integrand $ h\circ P_R \circ \Phi^{-1}\in \mathcal{S}^d([0,1]^d)$.
\end{lemma}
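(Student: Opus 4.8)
The plan is to reduce the whole statement to the behaviour of a single scalar composition $g := P_R \circ \Phi^{-1}$ acting on one coordinate, and then to exploit the coordinatewise (tensor) structure of the composition to treat all mixed partials at once. Writing $F := h\circ P_R\circ \Phi^{-1}$, I would observe that since the inner maps act componentwise, $F(\y) = h\big(g(y_1),\dots,g(y_d)\big)$, so each $y_j$ enters only through $z_j := g(y_j)$.

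First I would record an explicit formula for the mixed partial. Because $\partial^{\uu}$ differentiates each variable $y_j$ with $j\in\uu$ exactly once, and $z_k$ is independent of $y_j$ for $k\neq j$, a short induction on $\lvert\uu\rvert$ (applying the chain rule one new variable at a time, with no cross terms) gives
\begin{equation}\notag
  \partial^{\uu}F(\y) = (\partial^{\uu}h)\big(g(y_1),\dots,g(y_d)\big)\prod_{j\in\uu} g'(y_j).
\end{equation}
Justifying this needs only $g\in C^1$ and continuity of the mixed partials $\partial^{\uu}h$ (both available below); crucially, no higher derivatives of $g$ are ever required, since each coordinate is differentiated at most once.

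Second, I would prove that $g=P_R\circ\Phi^{-1}\in C^1([0,1])$ with $g$ and $g'$ continuous up to the endpoints $y\in\{0,1\}$. On the interior $(0,1)$ this is immediate: $\Phi^{-1}$ is $C^\infty$ with $(\Phi^{-1})'(y)=1/\varphi(\Phi^{-1}(y))$, and $P_R\in C^1(\R)$ by Lemma~\ref{lemma:projection}\ref{condition:proj2}, so $g'(y)=P_R'(\Phi^{-1}(y))/\varphi(\Phi^{-1}(y))$. The endpoints are where the real work lies and constitute the \emph{main obstacle}: as $y\to 1^-$ we have $\Phi^{-1}(y)\to+\infty$, so $(\Phi^{-1})'$ blows up. The resolution is that $P_R'$ has compact support: by Lemma~\ref{lemma:projection}\ref{condition:proj3}, $P_R'(x)=0$ for $\lvert x\rvert>R$. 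Hence for all $y\ge\Phi(R)$ (equivalently $\Phi^{-1}(y)\ge R$) one gets $g'(y)=0$, and moreover $g(y)=P_R(\Phi^{-1}(y))\equiv R-\tfrac12$ is constant there (taking $\eps=1$), matching $g(1)=P_R(\infty)=R-\tfrac12$ by Lemma~\ref{lemma:projection}\ref{condition:proj5}. The symmetric statement holds near $y=0$. Thus $g$ is constant in a neighborhood of each endpoint, so $g\in C^1([0,1])$ with $g'\equiv 0$ near $\{0,1\}$: the potential blow-up of $(\Phi^{-1})'$ is exactly annihilated by the vanishing of $P_R'$.

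Finally, I would assemble the pieces. The factor $(\partial^{\uu}h)$ is continuous on $\R^d$ since $h\in\mathcal{S}^d(\R^d)$, and $\boldsymbol{z}=(g(y_1),\dots,g(y_d))$ is a continuous map $[0,1]^d\to[-R,R]^d$ by the second step, so $(\partial^{\uu}h)(\boldsymbol{z})$ is continuous on $[0,1]^d$; the product $\prod_{j\in\uu}g'(y_j)$ is continuous for the same reason. Their product $\partial^{\uu}F$ is therefore continuous on $[0,1]^d$ for every $\uu\subseteq 1{:}d$, which is precisely the defining property of $\mathcal{S}^d([0,1]^d)$ in Definition~\ref{def:smooth}. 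The only delicate point is the boundary analysis of the second step; everywhere else the argument is a routine chain-rule and composition-of-continuous-functions computation.
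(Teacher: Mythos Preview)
Your proposal is correct and follows essentially the same approach as the paper: both compute the mixed partial via the componentwise chain rule to obtain $(\partial^{\uu}h)\circ P_R\circ\Phi^{-1}(\y)\cdot\prod_{j\in\uu} P_R'(\Phi^{-1}(y_j))\,(\Phi^{-1})'(y_j)$, and both resolve the boundary singularity by invoking condition~\ref{condition:proj3} of Lemma~\ref{lemma:projection} so that $P_R'$ kills the blow-up of $(\Phi^{-1})'$ near $y_j\in\{0,1\}$. Your treatment is simply more explicit, spelling out that $g=P_R\circ\Phi^{-1}$ is in fact constant on neighborhoods of the endpoints; the paper compresses this into the single remark that ``the derivative vanishes when $y_j$ approaches $0$ or $1$.''
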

\begin{proof}
    Note that our projection operator $ P_R $ is defined on $ [-\infty,\infty]^d$, so the modified integrand $ h\circ P_R \circ \Phi^{-1}$ is well-defined on $ [0,1]^d$. For any $ \uu \subseteq 1{:}d$,
    \begin{align}
        \partial^{\boldsymbol{u}} \left(h\circ P_R \circ \Phi^{-1}(\y)\right) &= \left( \partial^{\boldsymbol{u}} h\right)\circ P_R \circ \Phi^{-1}(\y) \cdot \prod_{j\in \uu} \frac{d P_R(\Phi^{-1}(y_{j}))}{d \Phi^{-1}(y_{j})}\frac{d \Phi^{-1}(y_{j})}{d y_{j}}.\notag
    \end{align}
    The right hand side of the above equality is well-defined on $ [0,1]^d$ because the projection operator $ P_R$ satisfies condition~\ref{condition:proj3} of Lemma~\ref{lemma:projection} and the derivative vanishes when $ y_j$ approaches $ 0$ or $ 1$. Therefore, it is continuous on $ [0,1]^d$.
\end{proof}

To deal with the singularities, Owen~\cite{owen2006a} used the low variation extension due to Sobol',
\begin{equation}\notag
            \widehat f(\y) = f(\boldsymbol{c} ) + \sum_{\uu \ne \varnothing} \int_{[\boldsymbol{c}_{\uu},\y_{\uu}]} \mathbf{1}_{\left\{\boldsymbol{z}_{\uu}{:}\boldsymbol{c}_{-\uu}\in K\right\}} \partial^{\uu} f(\boldsymbol{z}_{\uu}{:}\boldsymbol{c}_{-\uu}) d\boldsymbol{z}_{\uu},
        \end{equation}
where $K\subseteq [0,1]^d$ is Sobol's extensible with anchor $\boldsymbol{c}$. 
This extension is not differentiable at some points,  implying $\widehat f\notin \mathcal{S}^d([0,1]^d)$. Our modified integrand $ h\circ P_R \circ \Phi^{-1}$ is smooth, allowing to calculate the variation in the sense of Hardy and Krause simply by Definition~\ref{def:HK}. Furthermore, we can apply the results in Owen~\cite{owen2008}  to achieve higher convergence rate for such a smooth function.

Our P-QMC method uses the modified quadrature method 
\begin{equation}
    \widehat I_n^R(h) := \frac{1}{n} \sum_{j = 1}^n h\circ P_R \circ \Phi^{-1} (\boldsymbol{y}_j)\notag ,
\end{equation}
where $ \left\{ \y_1,\dots,\y_n\right\}$ is a low-discrepancy point set. We claim that the modified method overcomes the problem of infinite variation in the sense of Hardy and Krause, and moreover, its convergence rate is better than those in Owen~\cite{owen2006a} and He et al.~\cite{he2023}.

The total error of the proposed P-QMC method can be decomposed into the following two parts
\begin{equation}\label{eq:errordecom}
    \bigg| \widehat I_n^R(h) - \E\left[ h(W)\right]\bigg| \le \underbrace{\bigg| \widehat I_n^R(h) - \E\left[ h\circ P_R(W)\right]\bigg|}_\text{QMC error} + \underbrace{ \E \bigg|h\circ P_R(W)- h(W)\bigg|}_\text{projection error}.
\end{equation}
We are going to bound the error $  \E\left| h\circ P_R(W) -  h(W)\right|$ due to using the projection operator, which is called the \textbf{projection error}. Then, for the \textbf{QMC error} $ \left| \widehat I_n^R(h) - \E\left[ h\circ P_R(W)\right]\right|$, we calculate the variation of the modified integrand in the sense of Hardy and Krause, and use the Koksma-Hlawka inequality to obtain an upper bound. Both of these errors depend on the projection radius $ R$. Finally, we choose an appropriate $ R$ to balance the two sources of errors and obtain the convergence rate for the total error.

In contrast, if we use the original estimator $ \widehat I_n(h)$ defined in~\eqref{eq:qmcapproximate}, we will have an additional term called \textbf{sample error}, as shown below
\begin{equation}
    \bigg| \widehat I_n(h) - \E\left[ h(W)\right]\bigg| \le \underbrace{\bigg|  \widehat I_n(h) - \widehat I_n^R(h)\bigg|}_\text{sample error} + \left| \widehat I_n^R(h) - \E\left[ h(W)\right]\right|.
\end{equation}
The magnitude of sample error depends on how far the low-discrepancy points we use deviate from the boundaries. In this regard, Owen~\cite{owen2006a} specifically studied the case of Halton sequences and gave several regions that avoid the origin and other corners so that the sample error vanishes. Calculating sample error is generally a cumbersome task for the P-QMC method. However, if we use the RQMC method and consider the expected error, the sample error can be bounded by the projection error, thereby avoiding the estimation of this term. The next lemma will be used several times in the following proofs.
\begin{lemma}\label{lemma:intestimate}
    If $ d\in \N_{+}, M>0$ and $ T \ge 1/\sqrt{2M}$, then
    \begin{equation}\label{eq:basisint}
        \int_T^{\infty} x^{d}\eexp{-Mx^2} dx \le \frac{(d+2)!!}{4M}T^{d-1}\eexp{-MT^2}.
    \end{equation}
\end{lemma}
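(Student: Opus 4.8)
The plan is to set $I_d := \int_T^\infty x^d \eexp{-Mx^2}\,dx$ and prove the bound by a single induction driven by a downward recursion in the exponent. The whole argument rests on two elementary facts: the identity $2Mx\,\eexp{-Mx^2} = -(\eexp{-Mx^2})'$, which makes $I_d$ amenable to integration by parts, and the hypothesis $T \ge 1/\sqrt{2M}$, equivalently $T^{-2} \le 2M$, which is the \emph{only} place the lower bound on $T$ is used and which lets me trade one power of $T^{-2}$ for a factor $2M$.

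First I would record the anchor estimate at $d=0$. Since $x \ge T$ on the domain of integration, we have $1 \le x/T$, hence
\begin{equation}\notag
  I_0 = \int_T^\infty \eexp{-Mx^2}\,dx \le \frac{1}{T}\int_T^\infty x\,\eexp{-Mx^2}\,dx = \frac{1}{2MT}\eexp{-MT^2}.
\end{equation}
This is exactly the asserted bound read off at $d=0$, because $(0+2)!! = 2$ yields $\frac{2}{4M}T^{-1}\eexp{-MT^2}$; similarly the direct computation $I_1 = \frac{1}{2M}\eexp{-MT^2} \le \frac{3}{4M}\eexp{-MT^2}$ settles $d=1$. These serve as the two base cases, one for each parity.

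Next I would derive the recursion. Writing $x^d = x^{d-1}\cdot x$ and integrating by parts,
\begin{equation}\notag
  I_d = \frac{1}{2M}\int_T^\infty x^{d-1}\bigl(-(\eexp{-Mx^2})'\bigr)\,dx = \frac{1}{2M}T^{d-1}\eexp{-MT^2} + \frac{d-1}{2M}\,I_{d-2},
\end{equation}
where the boundary term at $+\infty$ vanishes because $x^{d-1}\eexp{-Mx^2}\to 0$. For $d \ge 2$ I would then substitute the inductive hypothesis $I_{d-2} \le \frac{d!!}{4M}T^{d-3}\eexp{-MT^2}$ into this recursion and apply $T^{d-3} = T^{-2}T^{d-1} \le 2M\,T^{d-1}$ to pull the lower-order term back up to the target power of $T$. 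This produces the coefficient $\frac{1}{4M}\bigl(2 + (d-1)d!!\bigr)$, and the induction closes via the double-factorial identity $(d+2)!! = (d+2)\,d!!$ together with the elementary inequality $2 + (d-1)d!! \le (d+2)d!! = (d+2)!!$, which reduces to $2 \le 3\,d!!$ and holds for all $d \ge 0$.

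The steps are all elementary, so there is no genuine analytic obstacle; the care is entirely in the combinatorial bookkeeping. The main thing to get right is parity: the recursion links $d$ to $d-2$, so even and odd exponents form separate chains bottoming out at $I_0$ and $I_1$, and both base cases must be verified. The secondary point is to confirm that the recursion's constant is truly \emph{dominated} by $(d+2)!!$ and not merely comparable to it, and that the inequality $T^{d-3}\le 2M\,T^{d-1}$ — the sole use of $T\ge 1/\sqrt{2M}$ — is invoked exactly once per induction step.
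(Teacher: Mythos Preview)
Your proof is correct and follows essentially the same route as the paper: both arguments use integration by parts to drop the exponent by two and invoke $T \ge 1/\sqrt{2M}$ to compare the resulting powers of $T$. The only difference is presentational---the paper first substitutes $u=\sqrt{2M}x$, then unrolls the recursion completely into a finite sum which it bounds term by term, whereas you keep the recursion implicit and close it by a clean induction; your version is arguably tidier and makes the role of the double-factorial identity more transparent.
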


\begin{proof}
    With $ u = \sqrt{2M}x$, the left hand side of~\eqref{eq:basisint} is equal to 
    \begin{align}
        &\left( 2M\right)^{-\frac{d+1}{2}}\int_{\sqrt{2M}T}^{\infty} u^{d}e^{-u^2/2}du \notag\\
        %=& \left( 2M\right)^{-\frac{d+1}{2}}\int_{\sqrt{2M}T}^{\infty} -u^{d-1} de^{-u^2/2} \notag\\
         =& \left( 2M\right)^{-\frac{d+1}{2}}\left( \left( \sqrt{2M}T\right)^{d-1}e^{-MT^2}+(d-1)\int_{\sqrt{2M}T}^{\infty}u^{d-2}e^{-u^2/2}du\right)\notag\\
         \le &\left( 2M\right)^{-\frac{d+1}{2}}\left( ( \sqrt{2M}T)^{d-1}+(d-1)(\sqrt{2M}T)^{d-3}+\cdots+(d-1)!!\right)\eexp{-MT^2}\notag\\
         \le &\left( 2M\right)^{-\frac{d+1}{2}}\frac{(d+2)}{2}(d-1)!!(\sqrt{2M}T)^{d-1}\eexp{-MT^2}\notag\\
         \le &\frac{(d+2)!!}{4M} T^{d-1} \eexp{-MT^2}\notag,
    \end{align}
    where in the first inequality we use 
    \begin{equation}\notag
        \int_{\sqrt{2M}T}^{\infty} e^{-u^2/2} du \le \int_{\sqrt{2M}T}^{\infty} u e^{-u^2/2} du = \eexp{-MT^2}
    \end{equation}
    for the case $ d $ is even, and in the last inequality, we use $ (d-1)!! \le d!!$.
\end{proof}

 When $ h$ falls into different growth classes, there are corresponding worst case estimates for the projection error. Note that the mean error can be bounded by the root mean squared error.
\begin{lemma}\label{lemma:projectioninterror}
   Assume  $ M>0$ and  $B>0$. For polynomial growth class $ G_p(M,B,k)$ with $ k>0$, if $ R>2$, then
    \begin{equation}
        \sup_{h\in G_p(M,B,k)} \E\left[\left(h\left(W\right)-h\circ P_R(W)\right)^2\right] \le C_1(R-1)^{[2k]+d-1}\eexp{-\frac{1}{2}(R-1)^2}\label{estimate:poly},
    \end{equation}
    where $ C_1 = (\frac{\pi}{2})^{\frac{d}{2}-1}\left(M^2+B^2\right)([2k]+d+2)!!$ and $ [2k] $ is the integer part of $ 2k$. And for exponential growth class $ G_e(M,B,k)$ with $ 0<k<2$, if $ R>1+\sqrt{2}$, then
    \begin{equation}
        \sup_{h\in G_e(M,B,k)} \E\left[\left(h\left(W\right)-h\circ P_R(W)\right)^2\right]  \le C_2(R-1)^{d-1}\eexp{-\frac{1}{4}(R-1)^2}\label{estimate:exp},
    \end{equation}
    where $ C_2 = (\frac{\pi}{2})^{\frac{d}{2}-1}B^2\eexp{(4kM)^{\frac{k}{2-k}}}(d+2)!!$.
\end{lemma}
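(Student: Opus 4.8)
The plan is to exploit that the smoothed projection $P_R$ is the identity on the inner box $H=[-R+1,R-1]^d$, so the integrand of the mean-squared projection error vanishes there and all of its mass sits in the tail $\R^d\setminus H$, where the Gaussian weight is exponentially small; since every bound below uses only the defining inequality of the growth class, it is automatically uniform over $h$ in the class and hence bounds the supremum. First I would record two elementary facts. By condition~\ref{condition:proj4} of Lemma~\ref{lemma:projection} one has $|P_R(x_j)|\le |x_j|$ for each coordinate, hence $|P_R(\x)|\le|\x|$; and taking $\uu=\varnothing$ in Definition~\ref{def:growth class} controls $h$ itself, so $|h(\x)|\le M|\x|^k+B$ (resp.\ $Be^{M|\x|^k}$). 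Combined with the triangle inequality these give the radial pointwise bounds
\[
  \bigl(h(\x)-h\circ P_R(\x)\bigr)^2\le 4\bigl(M|\x|^k+B\bigr)^2
  \quad\text{and}\quad
  \bigl(h(\x)-h\circ P_R(\x)\bigr)^2\le 4B^2\eexp{2M|\x|^k},
\]
and, crucially, the left-hand side is $0$ whenever $\x\in H$, because then $P_R(\x)=\x$ by Proposition~\ref{prop:projection}.

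I would then turn the expectation into a one-dimensional tail integral. The integrand is nonnegative and supported on $\R^d\setminus H$, and $\R^d\setminus H\subseteq\{\,|\x|>R-1\,\}$ (if $\x\notin H$ then some $|x_j|>R-1$, so $|\x|\ge|x_j|>R-1$). Enlarging the domain to this ball complement and passing to polar coordinates, the angular integration factors out because the pointwise bounds depend on $\x$ only through $|\x|$; it contributes the constant $\omega_{d-1}/(2\pi)^{d/2}=2^{1-d/2}/\Gamma(d/2)$, which is at most $(\pi/2)^{d/2-1}$ for every $d\ge1$. This reduces both estimates to radial integrals $\int_{R-1}^{\infty}(\text{growth})^2\,r^{d-1}\eexp{-r^2/2}\,dr$, precisely the objects controlled by Lemma~\ref{lemma:intestimate}.

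For the polynomial class the radial integrand carries the non-integer power $r^{2k}$; using $r>R-1>1$ I would absorb it by $r^{2k}+1\le 2r^{[2k]+1}$, so the integral becomes a multiple of $\int_{R-1}^{\infty}r^{[2k]+d}\eexp{-r^2/2}\,dr$, to which Lemma~\ref{lemma:intestimate} applies with its parameter equal to $1/2$ (its hypothesis $R-1\ge1$ is ensured by $R>2$); this produces the factor $([2k]+d+2)!!\,(R-1)^{[2k]+d-1}$ and the decay $\eexp{-(R-1)^2/2}$. For the exponential class the decisive step is to dominate $\eexp{2Mr^k}\eexp{-r^2/2}\le \eexp{C}\eexp{-r^2/4}$, where $C=\sup_{r\ge0}\bigl(2Mr^k-\frac14 r^2\bigr)$; this supremum is finite exactly because $k<2$ forces $\frac14r^2$ to dominate $2Mr^k$ as $r\to\infty$, and it is attained at $r^\ast=(4Mk)^{1/(2-k)}$, which is the source of the constant $\eexp{(4kM)^{k/(2-k)}}$ in $C_2$. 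Applying Lemma~\ref{lemma:intestimate} with its parameter equal to $1/4$ (whose hypothesis $R-1\ge\sqrt2$ is the reason for requiring $R>1+\sqrt2$) then yields $(d+2)!!\,(R-1)^{d-1}\eexp{-(R-1)^2/4}$, after the harmless step $r^{d-1}\le r^{d}$ for $r>1$.

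The main obstacle is the exponential case, and specifically the estimate $\sup_{r}(2Mr^k-\frac14r^2)<\infty$ together with the identification of its maximizer: this is both where the order restriction $0<k<2$ is genuinely used (at $k=2$ the two quadratic-scale terms compete and the bound degrades, which is exactly why the fast-growth class is excluded here and treated separately via importance sampling) and where the explicit constant of $C_2$ comes from. The polynomial case is comparatively routine once the non-integer exponent is tamed, and the remaining work---tracking the double-factorial constants emitted by Lemma~\ref{lemma:intestimate} and verifying $\omega_{d-1}/(2\pi)^{d/2}\le(\pi/2)^{d/2-1}$---is elementary bookkeeping.
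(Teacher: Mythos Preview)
Your proposal is correct and follows essentially the same route as the paper: restrict to the tail $\{|\x|>R-1\}$ using that $P_R$ is the identity on $H$, bound the squared difference pointwise via the growth condition and $|P_R(\x)|\le|\x|$, pass to polar coordinates, round the non-integer power up to apply Lemma~\ref{lemma:intestimate}, and in the exponential case trade $2Mr^k$ against $\tfrac14 r^2$. The only cosmetic difference is that where you maximize $2Mr^k-\tfrac14 r^2$ directly, the paper packages the same estimate as Young's inequality $ab\le a^p/p+b^q/q$ with $p=2/k$; these give the same maximizer $r^\ast=(4kM)^{1/(2-k)}$ and the same constant structure.
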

\begin{proof}
    We first prove~\eqref{estimate:poly}. For any $h\in G_p(M,B,k)$,
    \begin{align}
         \E\left[(h\left(W\right)-h\circ P_R(W))^2\right] 
        &\le (2\pi)^{-\frac{d}{2}}\int_{|\x|\ge R-1} \left(\left|h(\x)\right|+\left|h\circ P_R(\x)\right|\right)^2 e^{-|\x|^2/2}
       d\x\notag\\
        & \le (2\pi)^{-\frac{d}{2}}\int_{|\x|\ge R-1} \left(2M|\x|^k+2B\right)^2 e^{-|\x|^2/2} d \x \label{eq:lemma2-1}\\
        & \le (2\pi)^{-\frac{d}{2}}\int_{|\x|\ge R-1} 8\left(M^2|\x|^{2k}+B^2\right)e^{-|\x|^2/2}d \x,\notag
    \end{align}
    where we use $|h(\x)|\le M|\x|^{k}+B $ and $ |h\circ P_R(\x)| \le M|P_R(\x)|^k+B\le M|\x|^{k}+B$ by Lemma~\ref{lemma:projection}. Using the polar coordinates transformation, the right hand side of ~\eqref{eq:lemma2-1} can be written as
    \begin{align}
        \frac{8}{(2\pi)^{\frac{d}{2}}}&\int_0^{2\pi}\cdots\int_0^\pi\prod_{j =1}^{d-2}|\sin\psi_j|^{d-1-j}\int_{ R-1}^{\infty} |\x|^{d-1}\left(M^2|\x|^{2k}+B^2\right)e^{-\frac{|\x|^2}{2}}d|\x|d\psi_{1{:}(d-1)}\notag\\
        &\le 2\left(\frac{\pi}{2}\right)^{\frac{d}{2}-1}\int_{ R-1}^{\infty}\left( M^2|\x|^{2k+d-1}+B^2|\x|^{d-1}\right)e^{-|\x|^2/2}d|\x|\notag\\
        &\le 2\left(\frac{\pi}{2}\right)^{\frac{d}{2}-1}\int_{ R-1}^{\infty}\left(M^2+B^2\right)|\x|^{[2k]+d}e^{-|\x|^2/2}d|\x|\notag\\
        &\le \left(\frac{\pi}{2}\right)^{\frac{d}{2}-1}\left(M^2+B^2\right)([2k]+d+2)!!(R-1)^{[2k]+d-1}\eexp{-\frac{1}{2}(R-1)^2}\notag ,
    \end{align}
    where $ [k] $ is the the integer part of $ k$, at the first inequality we use the fact that $|\sin\psi|^j  \le 1$ and at the second inequality we use  $ |\x|\ge 1$ and $ 2k-1\le [2k] $, and at the last inequality, we use Lemma~\ref{lemma:intestimate}.

    For the proof of~\eqref{estimate:exp}, we start at~\eqref{eq:lemma2-1}. Replacing the term $ 2M|\x|^k+2B$ with $ 2B\eexp{M|\x|^k}$ and using the same method, we obtain
    \begin{align}
        \E\left[(h\left(W\right)-h\circ P_R(W))^2\right]  &\le \left(\frac{\pi}{2}\right)^{\frac{d}{2}-1}\int_{ R-1}^{\infty} B^2|\x|^{d-1}\eexp{2M|\x|^k}e^{-|\x|^2/2}d|\x| \notag\\
        &\le \left(\frac{\pi}{2}\right)^{\frac{d}{2}-1}B^2\int_{R-1}^{\infty}|\x|^d\eexp{2M|\x|^k}e^{-|\x|^2/2}d|\x|\label{eq:lemma2-3} ,
    \end{align} 
    where we use the fact that $|\x| \ge 1$. Note that by Young's inequality, for any $\eps > 0 $
    \begin{equation}\notag
       |\x|^k \le \frac{\left(\eps|\x|^k\right)^{\frac{2}{k}}}{\frac{2}{k}} + \frac{\left(\frac{1}{\eps}\right)^{\frac{2}{2-k}}}{\frac{2}{2-k}} = \frac{k}{2}\eps^{\frac{2}{k}}|\x|^{2} + \frac{2-k}{2\eps^{\frac{2}{2-k}}}  .
    \end{equation}
    Choose $\eps = (4kM)^{-\frac{k}{2}}$, so that 
    \begin{equation}\notag
        2M|\x|^k \le \frac{1}{4}|\x|^2 + (4kM)^{\frac{k}{2-k}} .
    \end{equation}
    Therefore, the right hand side of~\eqref{eq:lemma2-3} is bounded by 
    \begin{align}
        \left(\frac{\pi}{2}\right)^{\frac{d}{2}-1}&B^2\eexp{(4kM)^{\frac{k}{2-k}}}\int_{R-1}^{\infty} |\x|^d\eexp{-\frac{|\x|^2}{4}}d|\x| \notag\\
        &\le \left(\frac{\pi}{2}\right)^{\frac{d}{2}-1}B^2\eexp{(4kM)^{\frac{k}{2-k}}}(d+2)!!(R-1)^{d-1}\eexp{-\frac{1}{4}(R-1)^2}\notag,
    \end{align} 
    where in the above inequality, we use Lemma~\ref{lemma:intestimate}.
\end{proof}

%Since the projection operator $ P_R$ maintains the function unchanged over the range $ [-R+1,R-1]^d$, light-tailed distributions are very favorable for the projection error. As can be inferred from the results of Lemma~\ref{lemma:projectioninterror}, for the standard normal distribution, the convergence rate at which the projection error converges to zero is approximately $ \exp\left\{ -O(R^2)\right\}$, which is extremely fast.

The key to bound the QMC error lies in calculating the variation of the modified integrand $h\circ P_R \circ \Phi^{-1} $ in the sense of Hardy and Krause. For $ h$ in different growth classes, corresponding upper bound estimates for the variation are given in the following lemma.

\begin{lemma}\label{lemma:HK}
    For every fixed $ M>0, B>0, k>0$ and projection radius $ R>0$,
    \begin{equation}\label{HK:poly}
        \sup_{h \in G_p(M,B,k)} V_{\mathrm{HK}}(h\circ P_R \circ \Phi^{-1}) \le 2^{2d}\left(Md^{\frac{k}{2}}R^{k+d}+BR^d\right),
    \end{equation}
    \begin{equation}\label{HK:exp}
        \sup_{h \in G_e(M,B,k)} V_{\mathrm{HK}}(h\circ P_R \circ \Phi^{-1}) \le 2^{2d}BR^d\eexp{M(\sqrt{d}R)^k}.
    \end{equation}
\end{lemma}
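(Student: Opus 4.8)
The plan is to apply Definition~\ref{def:HK} directly to the smooth integrand $f = h\circ P_R \circ \Phi^{-1}$ (smooth on $[0,1]^d$ by Lemma~\ref{lemma:smooth}) and to reduce each term of the resulting sum to an elementary one-dimensional integral via the substitution $y_j = \Phi(x_j)$. First I would fix a nonempty $\uu \subseteq 1{:}d$. By the chain-rule expression established in the proof of Lemma~\ref{lemma:smooth}, evaluated at the frozen argument $\y_{\uu}{:}\mathbf{1}_{-\uu}$,
\begin{equation}\notag
    \partial^{\uu}\big(h\circ P_R \circ \Phi^{-1}\big)(\y_{\uu}{:}\mathbf{1}_{-\uu}) = (\partial^{\uu} h)(\boldsymbol{\zeta})\prod_{j\in\uu}\frac{P_R'(x_j)}{\varphi(x_j)},
\end{equation}
where $x_j = \Phi^{-1}(y_j)$ for $j\in\uu$, and $\boldsymbol{\zeta}$ is the point whose $j$-th coordinate equals $P_R(x_j)$ for $j\in\uu$ and equals $P_R(+\infty)$ for $j\notin\uu$ (the limit existing by condition~\ref{condition:proj5}). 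The $-\uu$ coordinates of the argument are constant, so the integral over them is trivial.

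Next, the substitution $y_j=\Phi(x_j)$, $d y_j=\varphi(x_j)\,d x_j$, cancels the $1/\varphi(x_j)$ factors exactly, turning the singular boundary integral into
\begin{equation}\notag
    \int_{[0,1]^d}\big|\partial^{\uu} f(\y_{\uu}{:}\mathbf{1}_{-\uu})\big|\,d\y = \int_{\R^{|\uu|}}\big|(\partial^{\uu} h)(\boldsymbol{\zeta})\big|\prod_{j\in\uu}|P_R'(x_j)|\,d x_j.
\end{equation}
Two features of the projection operator then do all the work. First, by condition~\ref{condition:proj4} every coordinate of $\boldsymbol{\zeta}$ has absolute value at most $R$ (including the frozen ones, since $|P_R(\pm\infty)|\le R$), so $|\boldsymbol{\zeta}|\le\sqrt{d}\,R$; hence the growth bounds of Definition~\ref{def:growth class} let me replace $|(\partial^{\uu} h)(\boldsymbol{\zeta})|$ by the constant $Md^{k/2}R^k+B$ in the polynomial case and by $Be^{M(\sqrt{d}R)^k}$ in the exponential case, and pull it outside the integral. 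Second, by condition~\ref{condition:proj3}, $\int_{\R}|P_R'(x_j)|\,d x_j\le\int_{-R}^{R}1\,d x_j=2R$, so the remaining product contributes at most $(2R)^{|\uu|}\le(2R)^d$, the last step using $R>1$, which is forced by the requirement $\eps=1<R$ in Lemma~\ref{lemma:projection}.

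Finally, summing over the at most $2^d$ nonempty subsets $\uu$ gives $V_{\mathrm{HK}}(f)\le 2^d(2R)^d(Md^{k/2}R^k+B)=2^{2d}(Md^{k/2}R^{k+d}+BR^d)$ in the polynomial case, and the analogous $2^{2d}BR^d e^{M(\sqrt{d}R)^k}$ in the exponential case, which are exactly~\eqref{HK:poly} and~\eqref{HK:exp}. I expect the only delicate point to be the cancellation step: the factors $1/\varphi(x_j)$ genuinely blow up as $y_j\to 0,1$, and it is precisely the compact support of $P_R'$ in $[-R,R]$ (condition~\ref{condition:proj3}) that keeps the transformed integrand integrable and the whole variation finite; everything else is bounding a constant times a product of one-dimensional integrals.
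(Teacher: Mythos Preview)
Your proposal is correct and follows essentially the same route as the paper: apply Definition~\ref{def:HK}, expand $\partial^{\uu}(h\circ P_R\circ\Phi^{-1})$ by the chain rule, use condition~\ref{condition:proj4} to bound the argument of $\partial^{\uu}h$ by $\sqrt{d}\,R$ in norm, use condition~\ref{condition:proj3} together with the substitution $y_j=\Phi(x_j)$ to reduce each active coordinate to $\int_{-R}^R 1\,dx_j=2R$, and finally sum over at most $2^d$ subsets. Your remark that $R>1$ is needed for $(2R)^{|\uu|}\le(2R)^d$ is a fair observation; the paper's convention $\eps=1$ in Lemma~\ref{lemma:projection} indeed forces this.
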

\begin{proof}
     For the proof of~\eqref{HK:poly}, note that by Lemma~\ref{lemma:smooth}, the modified integrand $h\circ P_R \circ \Phi^{-1} \in \mathcal{S}^d([0,1]^d)$. Therefore, we can use the definition of the variation in the sense of Hardy and Krause for smooth functions, i.e.,
    \begin{align}
        V_{\mathrm{HK}}(h\circ P_R \circ \Phi^{-1}) = \sum_{\varnothing \ne \uu \subseteq 1{:}d} \int_{\left[0,1\right]^d} \left|\partial^{\boldsymbol{u}}h\circ P_R \circ \Phi^{-1}\left( \boldsymbol{y_{u}}{:}\mathbf{1}_{-\boldsymbol{u}}\right)\right| d\boldsymbol{y}\notag.
    \end{align}
    It suffices to analyze every term in the right hand side of the above equality. Recall the relation~\eqref{eq:distribute} for distribution function $\Phi$. 
    \begin{align}
         &\bigg|\partial^{\boldsymbol{u}}h\circ P_R \circ \Phi^{-1}\left( \boldsymbol{y_{u}}{:}\mathbf{1}_{-\boldsymbol{u}}\right)\bigg| \notag\\
         =& \bigg| \partial^{\boldsymbol{u}}h\left(P_R\left(\boldsymbol{x}_{\boldsymbol{u}}\right){:}P_R \circ \Phi^{-1}\left(\mathbf{1}_{-\boldsymbol{u}}\right)\right)\bigg|\cdot\left|\prod_{j \in \uu}\frac{d P_R(\Phi^{-1}(y_{j}))}{d \Phi^{-1}(y_{j})}\frac{d \Phi^{-1}(y_{j})}{d y_{j}}\right|  \notag\\
         &\le \left(Md^{\frac{k}{2}}|R|^k+B\right)\prod_{j \in \uu}\mathbf{1}_{\{|x_{j}|\le R\}}\sqrt{2\pi}\eexp{\frac{|x_{j}|^2}{2}}\label{eq:HKestimate1},
    \end{align}
    where we use $h\in G_p(M,B,k)$ and the conditions~\ref{condition:proj3} and~\ref{condition:proj4} in Lemma~\ref{lemma:projection} for the smooth projection operator $P_R$. Integrating both sides of \eqref{eq:HKestimate1} and changing the variable from $\boldsymbol{y}$ to $\boldsymbol{x}$, we have
    \begin{align}
        \int_{\left[0,1\right]^d} &\left|\partial^{\boldsymbol{u}}h\circ P_R \circ \Phi^{-1}\left( \boldsymbol{y_{u}};\mathbf{1}_{-\boldsymbol{u}}\right)\right| d\boldsymbol{y} \notag\\
        &\ \ \ \le \left(Md^{\frac{k}{2}}R^k+B\right) \prod_{j \in \uu}\int_{-R}^Rdx_{j}\prod_{j\in\overline{\uu}}\int_{-\infty}^{\infty}\frac{1}{\sqrt{2\pi}}\eexp{-\frac{|x_j|^2}{2}}d x_j\label{HK:estimate2}\\ 
        &\ \ \ \le 2^d\left(Md^{\frac{k}{2}}R^{k+d}+BR^d\right)\notag,
    \end{align}
    where $ \overline{\uu}$ is the complement of $ \uu$. It follows from the above inequality that
    \begin{equation}
        V_{\mathrm{HK}}(h\circ P_R \circ \Phi^{-1}) \le 2^{2d}\left(Md^{\frac{k}{2}}R^{k+d}+BR^d\right)\notag .
    \end{equation}
    Therefore, ~\eqref{HK:poly} holds. For the proof of ~\eqref{HK:exp}, it suffices to replace the $Md^{\frac{k}{2}}R^k+B$ in~\eqref{HK:estimate2} with $B\eexp{M(\sqrt{d}R)^k}$.
\end{proof}

The projection operator plays a crucial role in computing the variation of functions in the sense of Hardy and Krause. Through compositing the projection operator, the non-zero region of the mixed partial derivative of the modified integrand is restricted by $R$, so the variation in the sense of Hardy and Krause can be bounded. By combining the two lemmas above, we obtain the following error bounds.
\begin{theorem}\label{them:pm-qmc}
    Let $\{\y_1,\dots,\y_n\}$ be a low-discrepancy point set.  The P-QMC method to approximate the integral $ \E\left[ h(W)\right]$ is
    \begin{equation}\label{eq:p_esitmator}
        \widehat I_n^R(h) := \frac{1}{n}\sum_{j =1}^n h\circ P_{R} \circ \Phi^{-1}(\boldsymbol{y}_j).
    \end{equation}
    (\uppercase\expandafter{\romannumeral1}) For polynomial growth class $ G_p(M,B,k)$, by choosing $R = \sqrt{4\log n}+1$, we obtain
        \begin{equation}\label{eq:thempoly}
            \sup_{h\in G_p(M,B,k)}\left|\widehat I_n^R(h) - \E\left[h(W)\right]\right| = O\left(n^{-1}(\log n)^{\frac{3d}{2}+\frac{k}{2}-1}\right) .
        \end{equation}
    (\uppercase\expandafter{\romannumeral2}) For exponential growth class $ G_e(M,B,k)$ with order $ 0<k<2$, by choosing $R = \sqrt{8\log n}+1$, we obtain
        \begin{align}
            \sup_{h\in G_e(M,B,k)}\left|\widehat I_n^R(h) - \E\left[h(W)\right]\right| 
            = O\left(n^{-1}(\log n)^{\frac{3d}{2}-1}\eexp{M(8d\log n)^{\frac{k}{2}}}\right)\label{eq:themexp} .
        \end{align}

\end{theorem}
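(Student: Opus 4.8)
The plan is to start from the error decomposition \eqref{eq:errordecom}, which splits the total error into the \textbf{QMC error} $|\widehat I_n^R(h)-\E[h\circ P_R(W)]|$ and the \textbf{projection error} $\E|h\circ P_R(W)-h(W)|$, and to bound each term using the two lemmas just established. The governing idea is a balance: the projection error decays like $\exp\{-c(R-1)^2\}$, super-fast in the radius $R$, whereas the Hardy--Krause variation of the modified integrand grows only polynomially in $R$ (polynomial class) or like $\exp\{M(\sqrt d\,R)^k\}$ with $k<2$ (exponential class). Choosing $R$ to grow like $\sqrt{\log n}$ will make the projection error of order $O(n^{-1}\,\mathrm{polylog})$ while keeping the variation sub-polynomial in $n$, so that multiplying by the discrepancy $O(n^{-1}(\log n)^{d-1})$ yields the claimed rates.

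For the QMC error I would apply the Koksma--Hlawka inequality to the smooth modified integrand $h\circ P_R\circ\Phi^{-1}$ (smooth by Lemma~\ref{lemma:smooth}),
\[
\left|\widehat I_n^R(h)-\E[h\circ P_R(W)]\right|\le V_{\mathrm{HK}}(h\circ P_R\circ\Phi^{-1})\,D_n^*,
\]
and use $D_n^*=O(n^{-1}(\log n)^{d-1})$ for digital nets. In case (I), substituting \eqref{HK:poly} with $R=\sqrt{4\log n}+1=O(\sqrt{\log n})$ gives $V_{\mathrm{HK}}=O((\log n)^{(k+d)/2})$, so the QMC error is $O(n^{-1}(\log n)^{(k+d)/2+d-1})=O(n^{-1}(\log n)^{3d/2+k/2-1})$. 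In case (II), substituting \eqref{HK:exp} gives $V_{\mathrm{HK}}=O((\log n)^{d/2}\exp\{M(\sqrt d\,R)^k\})$; since $R^k\sim(8\log n)^{k/2}$ for $R=\sqrt{8\log n}+1$, this produces the factor $\exp\{M(8d\log n)^{k/2}\}$, and multiplying by the discrepancy yields $O(n^{-1}(\log n)^{3d/2-1}\exp\{M(8d\log n)^{k/2}\})$.

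For the projection error I would first bound the mean error by the root mean squared error, $\E|X|\le(\E[X^2])^{1/2}$, and then invoke Lemma~\ref{lemma:projectioninterror}. In case (I), with $R-1=\sqrt{4\log n}$ we have $\exp\{-\frac{1}{2}(R-1)^2\}=n^{-2}$, so \eqref{estimate:poly} gives an $L^2$ bound of order $n^{-2}(\log n)^{([2k]+d-1)/2}$ and hence a projection error of order $n^{-1}(\log n)^{([2k]+d-1)/4}$. In case (II), with $R-1=\sqrt{8\log n}$ we have $\exp\{-\frac{1}{4}(R-1)^2\}=n^{-2}$, so \eqref{estimate:exp} yields a projection error of order $n^{-1}(\log n)^{(d-1)/4}$. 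Each choice of the radius constant ($4$ resp.\ $8$) is dictated precisely by the need to turn the Gaussian tail factor into $n^{-2}$, so that its square root contributes the target $n^{-1}$.

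Finally I would combine the two parts: since both are $O(n^{-1}\cdot(\text{power or subpower of }\log n))$, the total is governed by the larger factor, which in both cases is the QMC error --- in case (I) because $3d/2+k/2-1\ge([2k]+d-1)/4$ for all $d\ge1$ (a routine comparison of exponents), and in case (II) because $\exp\{M(8d\log n)^{k/2}\}$ grows faster than any power of $\log n$. The main obstacle, and the crux of the argument, is exactly this bookkeeping of the competing $\log n$ powers together with the calibration of the radius: one must verify that the super-fast Gaussian decay of the projection error, after taking the square root, does not outweigh the $n^{-1}$ contributed by the discrepancy, while the logarithmically slow growth of $R$ keeps the variation from spoiling the $n^{-1}$ rate.
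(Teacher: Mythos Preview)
Your proposal is correct and follows essentially the same route as the paper's proof: decompose via \eqref{eq:errordecom}, bound the QMC error by Koksma--Hlawka together with Lemma~\ref{lemma:HK}, bound the projection error by the square root of the MSE from Lemma~\ref{lemma:projectioninterror}, and then choose $R=\sqrt{4\log n}+1$ (resp.\ $\sqrt{8\log n}+1$) so that the Gaussian tail factor becomes $n^{-2}$ and the dominant contribution is the QMC term. Your bookkeeping of the $\log n$ exponents and the comparison showing the QMC error dominates match the paper's argument.
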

\begin{proof}
    For the proof of \eqref{eq:thempoly}, note that
    \begin{align}
        &\bigg|\widehat I_n^R(h) - \E\left[h(W)\right]\bigg| 
        \le \bigg|\widehat I_n^R(h) -\E\left[h\circ P_R(W)\right]\bigg|+\E\bigg|h\circ P_R(W)- h(W)\bigg|\notag\\
        \le & V_{\mathrm{HK}}(h\circ P_R \circ \Phi^{-1})D^{*}_n\left(\{\y_1,\dots,\y_n\}\right) + \sqrt{C_1}(R-1)^{\frac{[2k]+d-1}{2}}\eexp{-\frac{1}{4}(R-1)^2}\label{eq:twoerror}\\
        \le & 2^{2d}C\left(Md^{\frac{k}{2}}R^{k+d}+BR^d\right)\frac{(\log n)^{d-1}}{n} +\sqrt{C_1}(R-1)^{\frac{[2k]+d-1}{2}}\eexp{-\frac{1}{4}(R-1)^2} , \label{eq:ineq}
    \end{align}
    where $ C$ and $ C_1$ are constants. The first term of~\eqref{eq:twoerror} is obtained by Koksma-Hlawka inequality and the second term is due to~\eqref{estimate:poly} in Lemma~\ref{lemma:projectioninterror}. To make magnitude of the both term of~\eqref{eq:ineq} approximately the same,  we choose $R = \sqrt{4\log n}+1$, and then the second term is $ O(n^{-1}(\log n)^{([2k]+d-1)/4})$ and the first term is $ O(n^{-1}(\log n)^{3d/2+k/2-1})$. Therefore, by taking supreme at the both sides of~\eqref{eq:ineq}, we obtain~\eqref{eq:thempoly}. The proof for~\eqref{eq:themexp} is straight forward.
\end{proof}

\begin{Remark}
    In both cases of Theorem~\ref{them:pm-qmc}, the convergence rate is $ O(n^{-1+\eps})$, and all we need to notice is that for any $\eps > 0 $ and $0<k<2$, with $ t = \log n$,
    \begin{equation}
        \lim_{n\rightarrow \infty} \frac{\eexp{M\left(\sqrt{8d\log n}\right)^k}}{n^{\eps}}  =  \lim_{t \rightarrow \infty} \eexp{M(8d)^{\frac{k}{2}}t^{\frac{k}{2}}-\eps t} = 0\notag .
    \end{equation}
    However, the results in Theorem~\ref{them:pm-qmc} offer a more detailed analysis compared to the results presented in Owen~\cite{owen2006a}. These findings indicate that as the function grows faster, the performance of the QMC method becomes worse. %From the proof of the theorem, the total error of the P-QMC method consists of two parts. The first part is the \textbf{QMC error}, determined by the variation of the modified integrand and the discrepancy of the QMC points. The faster the function grows, the larger the variation bound is. The second part is the \textbf{projection error}, related to the density function. The lighter the tail of the density function, the faster this error converges to $0$ with increasing projection radius $ R$.
\end{Remark}

\begin{corollary}\label{cor:pqmc}
    Let $ \left\{ \y_1,\dots,\y_n\right\}$ be an RQMC point set used in the estimator $\widehat I_n(h)$ given by \eqref{eq:qmcapproximate} such that each $ \y_j \sim U[0,1]^d $ and 
    \begin{equation}\notag
        \E\left[ D^{*}_n\left(\{\y_1,\dots,\y_n\}\right)\right] \le C \frac{(\log n)^{d-1}}{n},
    \end{equation}
    where $ C $ is a constant independent of $n$.\\
    (\uppercase\expandafter{\romannumeral1}) For the polynomial growth class $G_p\left( M,B,k\right)$ with $ k>0$,
    \begin{equation}\notag
        \sup_{h\in G_p(M,B,k)}\E\left[\left(\widehat I_n(h) - \E[h(W)]\right)^2\right] = O\left(n^{-2}(\log n)^{3d+k-2}\right) .
    \end{equation}\\
    (\uppercase\expandafter{\romannumeral2}) For the exponential growth class $G_e\left( M,B,k\right)$ with order $ 0<k<2$, 
    \begin{equation}\notag
        \sup_{h\in G_e(M,B,k)}\E\left[\left(\widehat I_n(h) - \E[h(W)]\right)^2\right] 
            = O\left(n^{-2}(\log n)^{3d-2}\eexp{2M(8d\log n)^{\frac{k}{2}}}\right).
    \end{equation}
    
\end{corollary}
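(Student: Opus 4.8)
The plan is to reduce the mean squared error of the unmodified estimator $\widehat I_n(h)$ to quantities already controlled by Lemmas~\ref{lemma:projectioninterror} and~\ref{lemma:HK}, by inserting the P-QMC estimator $\widehat I_n^R(h)$ and its mean. Writing $\mu = \E[h(W)]$ and $\mu_R = \E[h\circ P_R(W)]$, I would split
\[
\widehat I_n(h) - \mu = \left(\widehat I_n(h) - \widehat I_n^R(h)\right) + \left(\widehat I_n^R(h) - \mu_R\right) + (\mu_R - \mu),
\]
and apply the triangle inequality for the $L^2$ norm (Minkowski), so that the square root of the target MSE is bounded by the sum of the three $L^2$ norms. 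Squaring and using $(a+b+c)^2 \le 3(a^2+b^2+c^2)$ then reduces the problem to bounding each of the three pieces separately. The first is the \emph{sample error}, the third is a deterministic \emph{projection bias}, and the middle is the genuine RQMC quadrature error.

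For the sample error, note that each $\y_j \sim U[0,1]^d$ forces $\Phi^{-1}(\y_j) \sim N(0,I_d)$, so with $g = h - h\circ P_R$ we have $\widehat I_n(h) - \widehat I_n^R(h) = \frac{1}{n}\sum_j g(\Phi^{-1}(\y_j))$. By Cauchy--Schwarz (convexity of $t\mapsto t^2$), $\big(\frac{1}{n}\sum_j g(\Phi^{-1}(\y_j))\big)^2 \le \frac{1}{n}\sum_j g(\Phi^{-1}(\y_j))^2$; taking expectations and using that every summand has the same $N(0,I_d)$ marginal gives $\E[(\widehat I_n(h) - \widehat I_n^R(h))^2] \le \E[(h(W) - h\circ P_R(W))^2]$, which is exactly the projection error bounded in Lemma~\ref{lemma:projectioninterror}. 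The bias term is even simpler: $|\mu_R - \mu| = |\E\, g(W)| \le \E|g(W)| \le \sqrt{\E[g(W)^2]}$ by Jensen, so it too is controlled by the square root of the same projection error.

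The middle term is the RQMC integration error for the smooth, BVHK integrand $h\circ P_R\circ\Phi^{-1}$. Applying the Koksma--Hlawka inequality for each realization of the random points gives $|\widehat I_n^R(h) - \mu_R| \le V_{\mathrm{HK}}(h\circ P_R\circ\Phi^{-1})\,D_n^*$ pointwise, hence $\E[(\widehat I_n^R(h) - \mu_R)^2] \le V_{\mathrm{HK}}(h\circ P_R\circ\Phi^{-1})^2\,\E[(D_n^*)^2]$, with the variation factor supplied by Lemma~\ref{lemma:HK}. The essential point, and the main obstacle, is that the first-moment bound $\E[D_n^*] \le C(\log n)^{d-1}/n$ by itself is too weak for a second-moment estimate (crudely using $(D_n^*)^2\le D_n^*$ would only yield an $n^{-1}$ rate). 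I would instead invoke that the scrambled points retain the net structure almost surely (Proposition~\ref{prop:scr1}), so that $D_n^* = O((\log n)^{d-1}/n)$ holds with probability one, whence $\E[(D_n^*)^2] = O((\log n)^{2(d-1)}/n^2)$.

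Finally, I would substitute the prescribed radii $R = \sqrt{4\log n}+1$ (polynomial case) and $R = \sqrt{8\log n}+1$ (exponential case), exactly as in the proof of Theorem~\ref{them:pm-qmc}. With these choices the projection errors controlling the sample-error and bias pieces decay like $n^{-2}$ up to a small power of $\log n$, while the squared variation from Lemma~\ref{lemma:HK} times $\E[(D_n^*)^2]$ produces $O(n^{-2}(\log n)^{3d+k-2})$ in the polynomial case and $O(n^{-2}(\log n)^{3d-2}\exp\{2M(8d\log n)^{k/2}\})$ in the exponential case. Since the middle (RQMC) term carries the largest power of $\log n$, it dominates and fixes the stated rates. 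Beyond the discrepancy second-moment issue, the remaining care is purely bookkeeping of the logarithmic exponents to confirm that the middle term indeed dominates the two projection-error terms.
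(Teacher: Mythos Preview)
Your approach is essentially identical to the paper's: the same three-term decomposition $\widehat I_n(h)-\mu = (\widehat I_n(h)-\widehat I_n^R(h)) + (\widehat I_n^R(h)-\mu_R) + (\mu_R-\mu)$, the same Cauchy--Schwarz/uniform-marginal argument to bound the sample-error term by $\E[(h(W)-h\circ P_R(W))^2]$, and then the same reference back to the estimates in Lemmas~\ref{lemma:projectioninterror} and~\ref{lemma:HK} with the radii from Theorem~\ref{them:pm-qmc}.

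The one point on which you are \emph{more} careful than the paper is the discrepancy second moment. The paper's proof simply writes ``the desired results can be obtained by the same way as in Theorem~\ref{them:pm-qmc}'' and does not address the passage from $\E[D_n^*]$ to $\E[(D_n^*)^2]$. Your observation that the stated first-moment hypothesis alone is, strictly speaking, too weak for an $n^{-2}$ bound on $\E[(\widehat I_n^R(h)-\mu_R)^2]$ is correct; your resolution via Proposition~\ref{prop:scr1} (the net property holds almost surely, so $D_n^*=O(n^{-1}(\log n)^{d-1})$ pointwise) closes the gap for scrambled nets and, more generally, for any RQMC scheme with an almost-sure discrepancy bound. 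That is implicitly what the paper relies on as well, even though the corollary is phrased only in terms of $\E[D_n^*]$.
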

\begin{proof}
    Recall the estimator $ \widehat I_n^R(h)$ defined by~\eqref{eq:p_esitmator}. It suffices to note that
    \begin{align}
        \E\left[\left(\widehat I_n(h) - \E[h(W)]\right)^2\right] &\le 3\E\left[(\widehat I_n(h) - \widehat I_n^R(h))^2\right] + 3\E\left[\left( \widehat I_n^R(h) - \E\left[ h\circ P_R(W)\right]\right)^2\right] \notag\\
        &+ 3\E\left[\left( h\circ P_R(W) - h(W)\right)^2\right], \notag
    \end{align}
    and 
    \begin{align}
        \E\left[\left(\widehat I_n(h) - \widehat I_n^R(h)\right)^2\right]&= \E\left[\left( \frac{1}{n}\sum_{j =1}^n h\circ \Phi^{-1}(\y_j) - h\circ P_R \circ \Phi^{-1}(\y_j)\right)^2\right] \notag\\
        & \le \frac{1}{n^2} n\sum_{j =1}^n \E\left[\left(h(W)-h\circ P_R(W) \right)^2\right]\notag\\& = \E\left[\left(h(W)-h\circ P_R(W) \right)^2\right]\notag.
    \end{align}
    The desired results can be obtained by the same way as in  Theorem~\ref{them:pm-qmc}.
\end{proof}
\begin{Remark}
    When using the RQMC point set as samples, we can drop the projection operator and simply use $ \widehat  I_n(h)$ as the estimator. The RMSE rate is still the same. Due to Propositions~\ref{prop:scr1} and~\ref{prop:scr2}, a scrambled $ (t,m,d)$-net satisfies the condition in Corollary~\ref{cor:pqmc}. Owen~\cite{owen2006a} only provided the convergence rate in the sense of mean error.  He et al.~\cite{he2023} showed that this rate holds also for RMSE by using scrambled digital nets. But this is not true if using other randomization methods, such as digitally shifted nets. In contrast, we use a different framework to provide the RMSE rate for general RQMC point sets satisfying $\E\left[ D^{*}_n\left(\{\y_1,\dots,\y_n\}\right)\right] =O(n^{-1}(\log n)^{d-1})$, including both digitally shifted nets and scrambled nets.
    
\end{Remark}

\section{Importance sampling based methods}\label{sec:is}

Importance sampling methods reduce variance and speed up convergence in Monte Carlo (MC) methods by choosing a suitable importance density. However, in QMC methods, there is no intuitive way to theoretically justify the role of importance sampling as in MC methods. 

We will show theoretically how importance sampling improves QMC. We use a distribution with heavier tails than normal to achieve two things.  First, as the projection radius $ R$ goes to infinity, importance sampling preserves the convergence rate of the projection error to zero. Second, the heavy-tailed density slows down the QMC error divergence to infinity significantly. Moreover, by applying the importance sampling and replacing the low-discrepancy points with scrambled net, we obtain the convergence rate $O(n^{-3/2+\eps})$ which is better than Owen~\cite{owen2006a} and He et al.~\cite{he2023}.

%% introduction about the RQMC 

We give a brief introduction to the importance sampling method. Suppose that $ \varphi$ and $ g$ are the density functions of $d$-dimensional random variables $ W$ and $ \tau$, respectively, then for any $ h$ is integrable under the measure induced by $W$, we have
\begin{align}
    \E\left[ h\left( W\right)\right] &= \int_{\R^d} h(\x) \varphi(\x)d\x \notag\\
    &= \int_{\R^d} \varphi(\x) \frac{h(\x)}{g(\x)} g(\x) d\x= \E\left[ h_{\mathrm{IS}} \left( \tau\right)\right]\notag,
\end{align}
where 
\begin{equation}\label{eq:isdef}
    h_{\mathrm{IS}} := \varphi\frac{h}{g}
\end{equation}
is the weighted function obtained after using importance sampling. 
Moreover, suppose that $\tau$ is a $d$-dimensional random variable with each component being independent of each other, i.e., the density function of $\tau = (\tau_1,\dots,\tau_d)$ has  the form
\begin{equation}
    g(\x) = \prod_{j = 1}^{d} g_j(x_j),
\end{equation}
where $ g_j$ is the marginal density function of $ \tau_j$. Let $F_j$ be the distribution function of $\tau_j$. For $ \y = (y_1,\dots,y_d)$, denote $ F^{-1}(\y)  = (F_1^{-1}(y_1),\dots,F_d^{-1}(y_d))$.

Our method is to apply the projection operator to $h_{\mathrm{IS}}$ and using the IS-based P-QMC method to obtain the quadrature rule
\begin{equation}\label{eq:is-pqmc}
    \widehat I_n^R(h_{\mathrm{IS}}) := \frac{1}{n}\sum_{j = 1}^{n} h_{\mathrm{IS}}\circ P_{R} \circ F^{-1} (\boldsymbol{y}_j) ,
\end{equation}
where $\{\boldsymbol{y}_j\}_{j =1}^{n}$ is a low-discrepancy point set. Moreover, if $ \left\{ \y_j\right\}$ is a scrambled $ (\lambda,t,m,d)$-net, then we drop the projection operator and use IS-based RQMC method to obtain the estimator
\begin{equation}\label{eq:is-rqmc}
    \widehat I_n(h_{\mathrm{IS}}) : = \frac{1}{n} \sum_{j =1}^{n} h_{\mathrm{IS}}\circ F^{-1}\left( \y_j\right).
\end{equation}

 We claim that our IS-based P-QMC and IS-based RQMC method can give the convergence rate for integrands in fast growth classes (see Definition~\ref{def:fast}), which are not ``QMC-friendly", and achieve better convergence rates of $ O\left( n^{-1+\eps}\right)$ and $ O\left( n^{-3/2+\eps}\right)$, respectively. Note that $G_e(M,B,2)$ is a larger set, so our convergence rate still holds for other growth conditions. In the following parts, we consider that $ h/g$ has the fast growth, i.e., $h/g \in G_e(M,B,2) \ \text{where}\  M <1/2$. 

\subsection{Importance sampling based P-QMC methods}
In this part, we derive the projection error caused by the projection method for the integrand after importance sampling, and then we derive the upper bound of variation in the sense of Hardy and Krause. By selecting an appropriate projection radius $ R$, we obtain the convergence rate of importance sampling based P-QMC method. The following lemma provides an estimate for the derivatives of the weighted function $ h_{\mathrm{IS}}$. 
\begin{lemma}\label{lemma:derivativeis}
    If $ h/g \in G_e\left( M,B,2\right),\ M < 1/2$ and $ \uu \subseteq 1{:}d$, then 
    \begin{equation}\label{eq:derivativeis}
         \left|\partial^{\boldsymbol{u}} h_{\mathrm{IS}}(\x)\right| \le  2^{|\boldsymbol{u}|}B|\x|^{|\boldsymbol{u}|}\eexp{-(\frac{1}{2}-M)|\x|^{2}} ,
    \end{equation}
    
\end{lemma}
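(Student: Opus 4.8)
The plan is to write $h_{\mathrm{IS}}$ as a product and differentiate it with the multivariate Leibniz rule. By the definition \eqref{eq:isdef}, $h_{\mathrm{IS}} = \varphi\cdot(h/g)$, where $\varphi(\x) = (2\pi)^{-d/2}\eexp{-|\x|^2/2}$ is the $d$-dimensional standard normal density. The two structural facts I would exploit are that $\varphi$ factorizes across coordinates and satisfies $\partial_{x_j}\varphi = -x_j\varphi$, while the quotient $\psi := h/g$ obeys the growth bound $\sup_{\uu\subseteq 1{:}d}|\partial^{\uu}\psi(\x)| \le B\eexp{M|\x|^2}$ coming from the hypothesis $\psi \in G_e(M,B,2)$ (Definition~\ref{def:growth class}).

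First I would apply the product rule for the mixed derivative $\partial^{\uu} = \prod_{j\in\uu}\partial_{x_j}$. Since $\uu$ consists of distinct indices, each $\partial_{x_j}$ lands on exactly one of the two factors, giving the subset expansion
\[
    \partial^{\uu}(\varphi\psi) = \sum_{\boldsymbol{v}\subseteq\uu}(\partial^{\boldsymbol{v}}\varphi)\,(\partial^{\uu\setminus\boldsymbol{v}}\psi).
\]
Next, iterating $\partial_{x_j}\varphi = -x_j\varphi$, I would establish the closed form $\partial^{\boldsymbol{v}}\varphi(\x) = (-1)^{|\boldsymbol{v}|}\big(\prod_{j\in\boldsymbol{v}}x_j\big)\varphi(\x)$, whence $|\partial^{\boldsymbol{v}}\varphi(\x)| \le |\x|^{|\boldsymbol{v}|}\eexp{-|\x|^2/2}$, using $\prod_{j\in\boldsymbol{v}}|x_j|\le |\x|^{|\boldsymbol{v}|}$ and $(2\pi)^{-d/2}\le 1$. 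Inserting the growth bound $|\partial^{\uu\setminus\boldsymbol{v}}\psi|\le B\eexp{M|\x|^2}$ and merging the two exponentials into $\eexp{-(\frac{1}{2}-M)|\x|^2}$ then yields
\[
    |\partial^{\uu}h_{\mathrm{IS}}(\x)| \le B\eexp{-(\tfrac{1}{2}-M)|\x|^2}\sum_{\boldsymbol{v}\subseteq\uu}|\x|^{|\boldsymbol{v}|}.
\]
Finally I would control the subset sum: there are $2^{|\uu|}$ subsets $\boldsymbol{v}\subseteq\uu$, and for $|\x|\ge 1$ each term satisfies $|\x|^{|\boldsymbol{v}|}\le|\x|^{|\uu|}$, so the sum is bounded by $2^{|\uu|}|\x|^{|\uu|}$, which is exactly \eqref{eq:derivativeis}.

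The conceptual content is light; the steps most deserving of care are the Leibniz expansion (it is a sum over \emph{subsets} rather than a full multinomial, precisely because $\partial^{\uu}$ differentiates each variable only once) and the tidy Gaussian identity for $\partial^{\boldsymbol{v}}\varphi$. The one genuinely delicate point is the final combinatorial estimate $\sum_{\boldsymbol{v}\subseteq\uu}|\x|^{|\boldsymbol{v}|}\le 2^{|\uu|}|\x|^{|\uu|}$, which requires $|\x|\ge 1$; this is harmless, since the bound is only invoked on the tail region $|\x|\ge R-1$ (with $R$ large) in the subsequent projection-error and Hardy--Krause variation computations. I would also stress that the hypothesis $M<1/2$ is exactly what makes the exponent $\frac{1}{2}-M$ positive, so that the right-hand side decays like a Gaussian and the later tail integrals converge.
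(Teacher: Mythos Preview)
Your proposal is correct and mirrors the paper's proof almost exactly: both use the subset Leibniz expansion $\partial^{\uu}(\varphi\psi)=\sum_{\boldsymbol{v}\subseteq\uu}(\partial^{\boldsymbol{v}}\varphi)(\partial^{\uu\setminus\boldsymbol{v}}\psi)$, the Gaussian identity $\partial^{\boldsymbol{v}}\varphi=(-1)^{|\boldsymbol{v}|}\big(\prod_{j\in\boldsymbol{v}}x_j\big)\varphi$, the growth bound on $\psi=h/g$, and the count $\sum_{\boldsymbol{v}\subseteq\uu}1=2^{|\uu|}$. You are in fact slightly more careful than the paper in flagging that the step $|\x|^{|\boldsymbol{v}|}\le|\x|^{|\uu|}$ needs $|\x|\ge 1$, a caveat the paper glosses over but which, as you note, is immaterial for the downstream tail estimates.
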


\begin{proof}
    For any $ \uu \subseteq 1{:}d$, note that
    \begin{align}
        \partial^{\boldsymbol{u}}h_{\mathrm{IS}} &= \partial^{\boldsymbol{u}}\left(\varphi\frac{h}{g}\right) = \sum_{\boldsymbol{u_1}+\boldsymbol{u_2} = \boldsymbol{u}} \partial^{\boldsymbol{u_1}}\varphi\partial^{\boldsymbol{u_2}}\frac{h}{g} \notag\\
        & = e^{-\frac{|\x|^2}{2}}\sum_{\boldsymbol{u_1}+\boldsymbol{u_2} = \boldsymbol{u}} \partial^{\boldsymbol{u_2}}\frac{h}{g}\prod_{j\in \boldsymbol{u_1}}(-x_j), \notag
    \end{align}
    where $ \uu_1+\uu_2 = \uu $ means that $ \uu_1 \cup \uu_2 = \uu $ and $ \uu_1 \cap \uu_2 = \varnothing$. Taking the modulus on both sides of the above formula, we obtain
    \begin{align}
        \left|\partial^{\boldsymbol{u}} h_{\mathrm{IS}}(\x)\right| &\le e^{-\frac{|\x|^2}{2}}\sum_{\boldsymbol{u_1}+\boldsymbol{u_2} = \boldsymbol{u}} Be^{M|\x|^2}\prod_{j\in \boldsymbol{u_1}}\left| x_j\right|\notag\\
        &\le  e^{-\frac{|\x|^2}{2}}\left(\sum_{\boldsymbol{u_1}+\boldsymbol{u_2} = \boldsymbol{u}} 1 \right)Be^{M|\x|^2}|\x|^{|\boldsymbol{u}|}\label{eq:lemma4-1}\\
        &\le  e^{-\frac{|\x|^2}{2}}2^{|\boldsymbol{u}|}B\eexp{M|\x|^2}|\x|^{|\boldsymbol{u}|}\label{eq:lemma4-2}.
    \end{align}
    In~\eqref{eq:lemma4-1} we use the fact that $ |x_j| \le |\x|$, and in~\eqref{eq:lemma4-2} we use 
    $\sum_{\boldsymbol{u_1}+\boldsymbol{u_2} = \boldsymbol{u}}1 = 2^{|\uu|}.$
\end{proof}

\begin{Remark}\label{rek:deriveis}
    Note that the right hand side of~\eqref{eq:derivativeis} is monotonically decreasing with respect to $ |\x|$ when $ |\x| \ge \sqrt{\frac{|\boldsymbol{u}|}{1-2M}}$ and converge to 0 when $ |\x| \rightarrow \infty$. Therefore, there exists a constant $ C(M,B,d)$ depending only on $M,B,d$, which dominates $ \sup_{\boldsymbol{u}\subseteq 1{:}d} \left|\partial^{\boldsymbol{u}} h_{\mathrm{IS}}(\x)\right|$.
\end{Remark}

The following lemma estimates the mean squared error which dominates the projection error.
\begin{lemma}\label{lemma:isintes}
    Assume  $M <1/2$ and $R \ge 1+ 1/\sqrt{1-2M}$. For every $ h$ and $ g$ satisfying $ h/g \in  G_e\left( M,B,2\right)$ and $ \E \left|\tau\right|^2 \le A$, we have 
\begin{equation}\label{eq:isexpint}
\begin{aligned}
      \E\left[ (h_{\mathrm{IS}}\circ P_R\left( \tau\right)- h_{\mathrm{IS}}\left( \tau\right)  )^2 \right]
    \le 16AB^2d(R-1)^2 \eexp{-(1-2M)(R-1)^2}.
\end{aligned}
\end{equation}
\end{lemma}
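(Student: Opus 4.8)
The plan is to recast the mean squared projection error as an integral against the proposal density $g$, exploit that the integrand vanishes on the inner cube $H=[-R+1,R-1]^d$, and then control the remaining tail region by a mean value estimate fed by the derivative bound of Lemma~\ref{lemma:derivativeis}. Concretely, I would first write
\begin{equation}\notag
    \E\left[(h_{\mathrm{IS}}\circ P_R(\tau)-h_{\mathrm{IS}}(\tau))^2\right]=\int_{\R^d}\left(h_{\mathrm{IS}}(P_R(\x))-h_{\mathrm{IS}}(\x)\right)^2 g(\x)\,d\x,
\end{equation}
and observe, via the first statement of Proposition~\ref{prop:projection}, that $P_R(\x)=\x$ whenever $\x\in H$, so the integrand is supported on $\R^d\setminus H$.

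On that tail set I would estimate the pointwise difference by the fundamental theorem of calculus along the segment joining $\x$ and $P_R(\x)$: since $h_{\mathrm{IS}}=\varphi\,h/g$ is smooth, $|h_{\mathrm{IS}}(\x)-h_{\mathrm{IS}}(P_R(\x))|\le |\x-P_R(\x)|\sup_{\boldsymbol{\xi}}|\nabla h_{\mathrm{IS}}(\boldsymbol{\xi})|$, the supremum ranging over points $\boldsymbol{\xi}$ on the segment. The length factor is controlled by $|\x-P_R(\x)|\le 2|\x|$, which follows from $|P_R(x_j)|\le|x_j|$ in Lemma~\ref{lemma:projection}. For the gradient, applying Lemma~\ref{lemma:derivativeis} with $\uu$ ranging over singletons gives $|\partial_j h_{\mathrm{IS}}(\boldsymbol{\xi})|\le 2B|\boldsymbol{\xi}|\eexp{-(\frac12-M)|\boldsymbol{\xi}|^2}$, hence $|\nabla h_{\mathrm{IS}}(\boldsymbol{\xi})|\le 2\sqrt{d}\,B|\boldsymbol{\xi}|\eexp{-(\frac12-M)|\boldsymbol{\xi}|^2}$.

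The key geometric input is the second statement of Proposition~\ref{prop:projection}: every $\boldsymbol{\xi}$ on the segment satisfies $|\boldsymbol{\xi}|\ge R-1$. Because the scalar map $t\mapsto t\eexp{-(\frac12-M)t^2}$ is decreasing for $t\ge 1/\sqrt{1-2M}$, and the hypothesis $R\ge 1+1/\sqrt{1-2M}$ guarantees $R-1\ge 1/\sqrt{1-2M}$, I can replace $|\boldsymbol{\xi}|$ by $R-1$ uniformly along the segment and obtain $\sup_{\boldsymbol{\xi}}|\nabla h_{\mathrm{IS}}(\boldsymbol{\xi})|\le 2\sqrt{d}\,B(R-1)\eexp{-(\frac12-M)(R-1)^2}$. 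Combining with $|\x-P_R(\x)|\le 2|\x|$, squaring, yields the pointwise bound $(h_{\mathrm{IS}}(P_R(\x))-h_{\mathrm{IS}}(\x))^2\le 16dB^2(R-1)^2|\x|^2\eexp{-(1-2M)(R-1)^2}$ on $\R^d\setminus H$ (and trivially $0$ on $H$).

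Finally I would integrate this bound against $g$ over $\R^d$ and use $\int_{\R^d}|\x|^2 g(\x)\,d\x=\E|\tau|^2\le A$, which reproduces exactly the claimed constant $16AB^2d(R-1)^2\eexp{-(1-2M)(R-1)^2}$. I expect the main obstacle to be the mean value step: the gradient bound must hold \emph{uniformly} along the whole segment, which hinges on the lower bound $|\boldsymbol{\xi}|\ge R-1$ from Proposition~\ref{prop:projection} combined with the monotonicity threshold $1/\sqrt{1-2M}$. This is precisely where the hypothesis $R\ge 1+1/\sqrt{1-2M}$ enters and is the crux of the argument; everything else is routine.
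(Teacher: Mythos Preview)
Your proposal is correct and follows essentially the same route as the paper's proof: restrict to $\R^d\setminus H$, apply a mean value / Cauchy--Schwarz estimate to bound $|h_{\mathrm{IS}}(\x)-h_{\mathrm{IS}}(P_R(\x))|$ by $|\nabla h_{\mathrm{IS}}(\boldsymbol{\xi})|\cdot|\x-P_R(\x)|$, feed in Lemma~\ref{lemma:derivativeis} together with Proposition~\ref{prop:projection} and the monotonicity threshold $1/\sqrt{1-2M}$ for the gradient factor, use $|\x-P_R(\x)|\le 2|\x|$ for the displacement, and finish with $\E|\tau|^2\le A$. The only cosmetic difference is that the paper bounds $|\x-P_R(\x)|^2\le 2(|\x|^2+|P_R(\x)|^2)\le 4|\x|^2$ rather than your slightly cleaner triangle-inequality step.
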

\begin{proof}
    Denote $ H = [-R+1,R-1]^d$. Note that by the definition of projection operator $ P_R$ in~\eqref{eq:smoothprojection}, $ P_R(\x) = \x$ for any $ \x \in H$. Therefore,
    \begin{align}\label{estimateoftotal}
         \E\left[ (h_{\mathrm{IS}}\circ P_R\left( \tau\right)- h_{\mathrm{IS}}\left( \tau\right)  )^2\right] = \int_{\R^d\setminus H} \bigg|h_{\mathrm{IS}}(\x) - h_{\mathrm{IS}}(P_R(\x))\bigg|^2 g(\x) d\x,
    \end{align}
    where $g$ is the density function of $\tau$. By the Lagrange mean value theorem, we obtain
    \begin{align}
        \bigg|h_{\mathrm{IS}}(\x) - h_{\mathrm{IS}}(P_R(\x))\bigg|^2 &= \bigg|\nabla h_{\mathrm{IS}}(\boldsymbol{\xi_x}) \cdot \left( \x - P_R(\x)\right)\bigg|^2\notag\\
        &\le \bigg| \nabla h_{\mathrm{IS}}(\boldsymbol{\xi_x})\bigg|^2 \bigg|\x - P_R(\x)\bigg|^2.
    \end{align}
    where $ \boldsymbol{\xi_x}$ lies on the line segment connecting points $ \x$ and $ P_R(\x)$, and $ \nabla$  represents the gradient operator. Note that 
    \begin{equation}\label{estimateofx}
        \bigg|\x - P_R(\x)\bigg|^2 \le 2\left( |\x|^2 + |P_R(\x)|^2\right) \le 4|\x|^2.
    \end{equation}
    And if $ |\boldsymbol{u}| = 1$, then by Lemma~\ref{lemma:derivativeis} and Remark~\ref{rek:deriveis}, we obtain
    \begin{align}
        \bigg| \partial^{\boldsymbol{u}}h_{\mathrm{IS}}(\boldsymbol{\xi_x})\bigg| &\le 2B |\boldsymbol{\xi_x}| \eexp{-(\frac{1}{2}-M)|\boldsymbol{\xi_x}|^2} \notag\\
        &\le 2B(R-1)\eexp{-(\frac{1}{2}-M)(R-1)^2}\label{decreasing}.
    \end{align}
    In~\eqref{decreasing}, note that by Proposition~\ref{prop:projection}, $ |\boldsymbol{\xi_x}| \ge R-1 $ for any $ \x \in \R^d \setminus H$ and by Remark~\ref{rek:deriveis}, the function is decreasing with respect to $ |\x|$ when $ |\x|\ge 1/\sqrt{1-2M}$. Therefore, 
    \begin{equation}\label{estimateofhis}
        \bigg|\nabla h_{\mathrm{IS}}(\boldsymbol{\xi_x})\bigg| \le 2B\sqrt{d}(R-1) \eexp{-(\frac{1}{2}-M)(R-1)^2}.
    \end{equation}
    As a result,~\eqref{eq:isexpint} follows from~\eqref{estimateoftotal},~\eqref{estimateofx} and~\eqref{estimateofhis}.
\end{proof}

\begin{Remark}\label{rek:isprojectionerror}
    %Note that the convergence rate of projection error after importance sampling is the same as before. The reason for this is that after importance sampling, the density function of normal distribution still appears in the integral, which makes the speed at which the error converges to 0 can be influenced by $e^{-|\x|^2/2}$.   
    By Lemma~\ref{lemma:isintes}, we can drive the estimate of projection error. Note that
    \begin{align}
          \E\bigg| h_{\mathrm{IS}}\circ P_R(\tau) -  h_{\mathrm{IS}}(\tau)\bigg|  &\le \left(\E\left[\left( h_{\mathrm{IS}}\circ P_R\left( \tau\right)- h_{\mathrm{IS}}\left( \tau\right) \right)^2\right]
        \right)^{\frac{1}{2}} \notag\\
        & \le 4\sqrt{Ad}B(R-1)\eexp{-\left( \frac{1}{2}-M\right)\left( R-1\right)^2}. \notag
    \end{align}
\end{Remark}

Next, we aim to demonstrate that importance sampling can decelerate the rate at which the variation, in the sense of Hardy and Krause, diverges towards infinity. Combining the results of Lemma~\ref{lemma:derivativeis} and Remark~\ref{rek:deriveis}, we obtain the following result.
\begin{lemma}\label{lemma:ishk}
    After using importance sampling, the variation of the modified integrand $ h_{\mathrm{IS}}\circ P_R \circ F^{-1}$ in the sense of Hardy and Krause has a uniform upper bound for every $ h$ and $ g$ satisfying $ h/g \in G_e(M,B,2)$
    \begin{equation}\label{eq:ishk}
        \sup_{h/g \in G_e(M,B,2)} V_{\mathrm{HK}}(h_{\mathrm{IS}}\circ P_R \circ F^{-1}) \le C(M,B,d)2^{2d}R^{d} .
    \end{equation}
\end{lemma}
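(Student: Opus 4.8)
The plan is to mirror the argument of Lemma~\ref{lemma:HK}, replacing the integrand $h$ by the weighted function $h_{\mathrm{IS}}$ and the Gaussian inverse CDF $\Phi^{-1}$ by the general inverse CDF $F^{-1}$. First I would argue, exactly as in Lemma~\ref{lemma:smooth}, that $h_{\mathrm{IS}}\circ P_R\circ F^{-1}\in\mathcal{S}^d([0,1]^d)$: since $h/g\in\mathcal{S}^d(\R^d)$ and $\varphi$ is smooth, the product $h_{\mathrm{IS}}=\varphi\,h/g$ is smooth; composing with the $C^1$ operator $P_R$ and the smooth inverse marginals $F_j^{-1}$ preserves smoothness, and condition~\ref{condition:proj3} forces every mixed derivative to vanish as any $y_j\to 0$ or $1$. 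This justifies evaluating $V_{\mathrm{HK}}$ through Definition~\ref{def:HK} as the sum over $\varnothing\ne\uu\subseteq 1{:}d$ of the integrals of $\left|\partial^{\uu}(h_{\mathrm{IS}}\circ P_R\circ F^{-1})\right|$.

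Second, for each $\uu$ I would apply the chain rule to obtain
\[
\partial^{\uu}(h_{\mathrm{IS}}\circ P_R\circ F^{-1})(\y)=(\partial^{\uu}h_{\mathrm{IS}})\circ P_R\circ F^{-1}(\y)\prod_{j\in\uu}\frac{dP_R(F_j^{-1}(y_j))}{dF_j^{-1}(y_j)}\frac{dF_j^{-1}(y_j)}{dy_j}.
\]
The crucial simplification comes from the change of variables $x_j=F_j^{-1}(y_j)$: by the inverse function theorem $dF_j^{-1}(y_j)/dy_j=1/g_j(x_j)$, while $dy_j=g_j(x_j)\,dx_j$, so the proposal density cancels exactly. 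The factor $dP_R/d(\cdot)$ is then controlled by condition~\ref{condition:proj3}, namely $|dP_R/dx|\le\mathbf{1}_{\{|x|\le R\}}$, which confines the remaining integration to $[-R,R]^{|\uu|}$. Each term thus reduces to $\int_{[-R,R]^{|\uu|}}\left|(\partial^{\uu}h_{\mathrm{IS}})(\cdots)\right|d\x_{\uu}$, with the off-$\uu$ coordinates frozen at $P_R(F_j^{-1}(1))$, a finite value by condition~\ref{condition:proj5}.

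Third, I would invoke Remark~\ref{rek:deriveis}, which supplies the uniform bound $\sup_{\uu}\left|\partial^{\uu}h_{\mathrm{IS}}(\x)\right|\le C(M,B,d)$ holding for all $\x$ and all admissible pairs with $h/g\in G_e(M,B,2)$; this is precisely what makes the final estimate uniform over the whole class. Then each term is at most $C(M,B,d)(2R)^{|\uu|}\le C(M,B,d)2^{d}R^{d}$ (using $R\ge 1$), and summing over the at most $2^{d}$ nonempty subsets $\uu$ yields the claimed $C(M,B,d)2^{2d}R^{d}$.

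The main obstacle---and the conceptual payoff---is the perfect cancellation of the arbitrary proposal density $g$ against the Jacobian of $F^{-1}$: this is what converts a bound that a priori depends on the unknown $g$ into one depending only on $M,B,d$ and the projection radius, and it hinges on reparametrizing into $\x$-space \emph{before} estimating. The only care required is mild regularity of the importance density, namely that each $F_j$ be strictly increasing with smooth positive marginal $g_j$, so that $F_j^{-1}$ is smooth and the change of variables is legitimate; everything else follows from the derivative estimate of Lemma~\ref{lemma:derivativeis} and its uniform consequence in Remark~\ref{rek:deriveis}.
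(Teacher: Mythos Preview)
Your proposal is correct and follows essentially the same route as the paper: mirror Lemma~\ref{lemma:HK}, use the chain rule and the change of variables $x_j=F_j^{-1}(y_j)$ so that the Jacobian $1/g_j(x_j)$ cancels against $dy_j=g_j(x_j)\,dx_j$, invoke Remark~\ref{rek:deriveis} to bound $|\partial^{\uu}h_{\mathrm{IS}}|\le C(M,B,d)$ uniformly, and then integrate over $[-R,R]^{|\uu|}$ and sum over the $2^d$ subsets. The paper's proof is terser but relies on exactly the same ingredients and yields the same bound.
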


\begin{proof}
    The proof is nearly the same as in Lemma~\ref{lemma:HK}, and all we need is to note that by Lemma~\ref{lemma:derivativeis} and Remark~\ref{rek:deriveis}, every mixed derivative of $ h_{\mathrm{IS}}$ is dominated by some constant only depends on  $M,B$ and $ k$. Therefore, for any $ \uu \subseteq 1{:}d$, we can rewrite~\eqref{HK:estimate2} as 
    \begin{align}
        \int_{\left[0,1\right]^d} &\left|\partial^{\uu} h_{\mathrm{IS}}\circ P_R \circ F^{-1}\left(\boldsymbol{y}_{\boldsymbol{u}};\mathbf{1}_{-\boldsymbol{u}}\right)\right| d\boldsymbol{y} \notag\notag\\
        &\ \ \ \le C\left(M,B,d\right) \prod_{j\in \uu}\int_{-R}^Rdx_{j}\prod_{j\in \overline{\uu}}\int_{-\infty}^{\infty}g_j(x_j)d x_j\notag\\ 
        &\ \ \ \le C\left(M,B,d\right)2^dR^d\notag,
    \end{align}
    and~\eqref{eq:ishk} follows from that there are at most $ 2^d$ such terms in the definition of $ V_{\mathrm{HK}}$ (see Definition~\ref{def:HK}).
\end{proof}

Combining Remark~\ref{rek:isprojectionerror} and Lemma~\ref{lemma:ishk} and using the same method in the proof of Theorem~\ref{them:pm-qmc}, we obtain the convergence rate for importance sampling based P-QMC method. 
\begin{theorem}\label{them:isPM-QMC}
    Assume $ M < 1/2$ and $ \E \left|\tau\right|^2 \le A$. Let $ \left\{\y_1,\dots ,\y_n\right\}  $ be a low-discrepancy point set. 
    By choosing $R = \sqrt{\frac{2}{1-2M}\log n} + 1$, we obtain 
    \begin{equation}\notag
        \sup_{\substack{h/g\in G_e(M,B,2) \\ \E\left|\tau\right|^2\le A}}\bigg| \widehat I_n^R(h_{\mathrm{IS}})- \E\left[h(W)\right]\bigg| = O\left(n^{-1}(\log n)^{\frac{3d}{2}-1}\right).
    \end{equation}
    
\end{theorem}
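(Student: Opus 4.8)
The plan is to mirror the proof of Theorem~\ref{them:pm-qmc} almost verbatim, since every quantitative ingredient is already available. The starting point is the importance sampling identity $\E[h(W)] = \E[h_{\mathrm{IS}}(\tau)]$ established in~\eqref{eq:isdef}, which lets me rewrite the total error as $|\widehat I_n^R(h_{\mathrm{IS}}) - \E[h_{\mathrm{IS}}(\tau)]|$ and then split it, exactly as in the decomposition~\eqref{eq:errordecom}, into a QMC error $|\widehat I_n^R(h_{\mathrm{IS}}) - \E[h_{\mathrm{IS}}\circ P_R(\tau)]|$ and a projection error $\E|h_{\mathrm{IS}}\circ P_R(\tau) - h_{\mathrm{IS}}(\tau)|$. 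Since the P-QMC estimator already contains the projection operator, there is no additional sample-error term to control.

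For the QMC error I would apply the Koksma--Hlawka inequality to the modified integrand $h_{\mathrm{IS}}\circ P_R\circ F^{-1}$, which is smooth on $[0,1]^d$ by the same argument as in Lemma~\ref{lemma:smooth}. This bounds it by $V_{\mathrm{HK}}(h_{\mathrm{IS}}\circ P_R\circ F^{-1})\, D^*_n(\{\y_1,\dots,\y_n\})$. Lemma~\ref{lemma:ishk} supplies the uniform bound $V_{\mathrm{HK}}(h_{\mathrm{IS}}\circ P_R\circ F^{-1}) \le C(M,B,d)\,2^{2d}R^d$, and the low-discrepancy property gives $D^*_n = O(n^{-1}(\log n)^{d-1})$, so the QMC error is $O(R^d n^{-1}(\log n)^{d-1})$. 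For the projection error I would invoke Remark~\ref{rek:isprojectionerror} directly, which furnishes the bound $4\sqrt{Ad}\,B(R-1)\eexp{-(\frac{1}{2}-M)(R-1)^2}$, uniform over all admissible $h,g$ precisely because of the hypothesis $\E|\tau|^2\le A$.

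The decisive step is the choice of $R$. Setting $R = \sqrt{\frac{2}{1-2M}\log n}+1$ makes $(R-1)^2 = \frac{2}{1-2M}\log n$, so the exponent in the projection error becomes exactly $-(\frac{1}{2}-M)\cdot\frac{2}{1-2M}\log n = -\log n$; the exponential factor collapses to $n^{-1}$ and the projection error reduces to $O((\log n)^{1/2}n^{-1})$. Under the same substitution $R^d = O((\log n)^{d/2})$, so the QMC error becomes $O(n^{-1}(\log n)^{d/2 + d - 1}) = O(n^{-1}(\log n)^{3d/2-1})$. Since this dominates the projection error, summing the two bounds and taking the supremum over the admissible class yields the claimed rate $O(n^{-1}(\log n)^{3d/2-1})$.

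I do not anticipate a genuine obstacle, as the preceding lemmas do all the analytic work; the only point requiring care is the balancing algebra, namely verifying $(\frac{1}{2}-M)\frac{2}{1-2M}=1$ so that the projection error decays like $n^{-1}$, and checking that the logarithmic powers coming from $R^d$ and from $D^*_n$ combine to $3d/2-1$. Conceptually, the assumption $M<1/2$ is exactly what keeps this exponent positive, guaranteeing that the chosen $R$ is real and that the heavier-tailed proposal forces the projection error to vanish, which is the whole point of introducing importance sampling here.
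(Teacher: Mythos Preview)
Your proposal is correct and follows essentially the same approach as the paper, which simply states that the result follows by combining Remark~\ref{rek:isprojectionerror} and Lemma~\ref{lemma:ishk} via the method of Theorem~\ref{them:pm-qmc}. Your explicit verification of the balancing algebra, namely that $(\tfrac12-M)\tfrac{2}{1-2M}=1$ and that the logarithmic powers combine to $3d/2-1$, is exactly the computation the paper leaves implicit.
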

    
\begin{Remark}
    Note that this theorem tells us that by combining importance sampling, for polynomial growth classes, the degree of $ \log n$ on the denominator of the convergence rate has been reduced from $3d/2 + k/2 -1$ to $ 3d/2-1$. For exponential growth classes, there is no exponential term on the denominator of the convergence rate. Moverover, for fast growth class, without IS, the convergence rate is $O\left( n^{-1+2M+\eps}\right)$, and by applying IS, we achieve the convergence rate $ O(n^{-1}(\log n)^{3d/2-1})$. This shows that after combining importance sampling, theoretically we obtain a faster convergence rate.
\end{Remark}

\subsection{Importance sampling based RQMC methods}
The results of the previous sections are based on the framework of the Koksma-Hlawka inequality, so they hold for any low-discrepancy point set. When replacing the QMC points with randomized QMC (RQMC) points, we obtain the importance sampling based RQMC method. This method can achieve a faster convergence rate of $O(n^{-3/2+\epsilon})$.

Owen~\cite{owen1997b,owen2008} gave a variance estimation of scrambled net for smooth functions on $ [0,1]^{d}$, and the variance is related to the infinity norm of the integrand. One thing to note is that the original integrand $ h\circ \Phi^{-1}$ does not satisfy the condition, because it has singularities. However, when we use the projection method, that is, the original integrand is composed with the projection operator, the modified integrand $ h\circ P_R\circ \Phi^{-1}$ or $ h_{\mathrm{IS}}\circ P_R \circ F^{-1}$ is smooth and we can estimate its infinity norm. The following result is from Owen~\cite{owen2008}, which consider the scrambled $ \left( \lambda,t,m,d\right)$-net (see Definition~\ref{def:tmd-net}).
\begin{lemma}\label{lemma:owen}
Let $ f(\x)$ be a smooth function. Suppose
that $ \{\boldsymbol{a}_1,\dots,\boldsymbol{a}_n\}$ is a $(\lambda, t,m, d)$-net in base $ b\ge2$ with $ n = \lambda b^m$. If $ \{\boldsymbol{y}_j\}$ is the
 scrambled version of $ \{\boldsymbol{a}_j\}$, then with $ 1\le \lambda <b$ and $ m > t+d$,
\begin{equation}
    \bvar{\frac{1}{n}\sum_{j =1}^d f(\y_j)}  \le C\frac{(\log n)^{d-1}}{n^3} \max_{\boldsymbol{u}\subseteq 1{:}d}\| \partial^{\boldsymbol{u}}f\|_{\infty}^{2},
\end{equation}
where $ C$ is a constant that depends only on $ b,\lambda,t,d $.
\end{lemma}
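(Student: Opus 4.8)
The plan is to follow the multiresolution (functional ANOVA) analysis of scrambled nets pioneered in Owen~\cite{owen1997b}, specialized to smooth integrands. First I would expand $f$ in a tensor-product Haar-type wavelet basis on $[0,1]^d$, indexing the basis functions by a resolution multi-index $\boldsymbol{k} = (k_1,\dots,k_d)\in\N^d$ and by position within each level. Writing $\sigma^2_{\boldsymbol{k}}$ for the sum of squared wavelet coefficients at resolution $\boldsymbol{k}$, the scrambled-net variance identity expresses the variance of the estimator as
\[
    \bvar{\frac{1}{n}\sum_{j=1}^n f(\y_j)} = \frac{1}{n}\sum_{\boldsymbol{k}} \Gamma_{\boldsymbol{k}}\,\sigma^2_{\boldsymbol{k}},
\]
where the nonnegative gain coefficients $\Gamma_{\boldsymbol{k}}$ depend only on the digital structure of the net (through $b,\lambda,t,d$) and not on $f$. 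The whole estimate then reduces to controlling the two factors $\sigma^2_{\boldsymbol{k}}$ and $\Gamma_{\boldsymbol{k}}$ separately.

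For the coefficients I would exploit smoothness. Letting $\uu = \{j : k_j\ge 1\}$ be the set of active coordinates at resolution $\boldsymbol{k}$, a one-dimensional mean-value estimate shows that a Haar coefficient on a base-$b$ interval of length $b^{-k_j}$ is $O(b^{-3k_j/2})$, with the constant controlled by $\|\partial^{\uu}f\|_\infty$, since the wavelet has vanishing mean and $L^1$ norm $O(b^{-k_j/2})$. Multiplying over the active coordinates and summing over the $O(\prod_{j\in\uu} b^{k_j})$ positions yields
\[
    \sigma^2_{\boldsymbol{k}} \le C \Big(\max_{\uu\subseteq 1{:}d}\|\partial^{\uu}f\|_\infty^2\Big)\prod_{j} b^{-2k_j}.
\]
For the gain coefficients I would invoke the $(\lambda,t,m,d)$-net property: equidistribution in every elementary interval of volume $b^{t-m}$ forces $\Gamma_{\boldsymbol{k}} = 0$ whenever $\sum_j k_j \le m-t$, while a counting argument bounds $\Gamma_{\boldsymbol{k}}$ by a constant depending only on $b,\lambda,t,d$ at the remaining resolutions.

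Substituting both bounds, the variance is at most a constant times $n^{-1}\max_{\uu}\|\partial^{\uu}f\|_\infty^2$ times the tail sum $\sum_{\sum_j k_j > m-t}\prod_j b^{-2k_j}$. Grouping by $s = \sum_j k_j$, there are $O(s^{d-1})$ compositions and each contributes $b^{-2s}$, so the tail sum is $O(m^{d-1} b^{-2(m-t)})$; using $n=\lambda b^m$, i.e. $b^{-2m}=O(n^{-2})$ and $m = O(\log n)$, with the hypothesis $m>t+d$ ensuring we are in the asymptotic regime, this is $O((\log n)^{d-1} n^{-2})$. Combined with the outer factor $1/n$ this gives the claimed $O(n^{-3}(\log n)^{d-1})$.

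The main obstacle is the bookkeeping rather than any single hard inequality. I expect the delicate parts to be (i) establishing the $b^{-3k_j/2}$ decay of the wavelet coefficients from smoothness while tracking exactly which mixed derivative $\partial^{\uu}f$ governs each resolution, so that the final bound involves only $\max_{\uu}\|\partial^{\uu}f\|_\infty$; and (ii) proving both the vanishing below resolution $m-t$ and the uniform boundedness of the gain coefficients $\Gamma_{\boldsymbol{k}}$ for the scrambled $(\lambda,t,m,d)$-net, which is the genuinely net-theoretic ingredient and the heart of the argument. Since this lemma is precisely the result of Owen~\cite{owen2008}, an acceptable alternative is to cite it directly; the sketch above indicates how its proof proceeds.
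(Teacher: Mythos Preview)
Your sketch is a faithful outline of the argument in Owen~\cite{owen1997b,owen2008}, and you correctly identify the acceptable alternative of citing the result directly; that is exactly what the paper does---it states the lemma as taken from Owen~\cite{owen2008} with no further proof. So your proposal actually provides more detail than the paper; either your sketch or a direct citation is fine here.
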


Recall that $ h_{\mathrm{IS}}\circ P_R \circ F^{-1} \in \mathcal{S}^d([0,1]^{d})$. Therefore, it satisfies the condition of the Lemma~\ref{lemma:owen}. Next, we give an upper bound of the infinity norm of the modified integrand $ h_{\mathrm{IS}}\circ P_R \circ F^{-1}$.

\begin{lemma}\label{lemma:isinftynorm}
    Suppose that $ M < 1/2$, then for $\uu \subseteq 1{:}d$ and $ h$ and $ g$ satisfying $ h/g \in G_e(M,B,2)$, there exists a constant $C(M,B,d)$ such that
    \begin{equation}
        %\sup_{h/f_{\tau}\in G_e(M,B,2)}
        \| \partial^{\boldsymbol{u}}(h_{\mathrm{IS}}\circ P_R \circ F^{-1})\|_{\infty} \le C(M,B,d)\prod_{j \in \boldsymbol{u}} \sup_{|x_j|\le R} \frac{1}{g_j(x_{j})}\label{eq:ismax} .
    \end{equation}
\end{lemma}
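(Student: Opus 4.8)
The plan is to reduce the claim to a componentwise chain-rule computation that mirrors the one already carried out in the proof of Lemma~\ref{lemma:HK}, but with $h$ replaced by $h_{\mathrm{IS}}$ and $\Phi^{-1}$ replaced by $F^{-1}$. Since both $P_R$ and $F^{-1}$ act coordinatewise, for any fixed $\y\in[0,1]^d$ and writing $\x=F^{-1}(\y)$ (so $x_j=F_j^{-1}(y_j)$), the mixed partial derivative factorizes as
\begin{equation}\notag
    \partial^{\uu}\!\left(h_{\mathrm{IS}}\circ P_R\circ F^{-1}(\y)\right)
    =\left(\partial^{\uu}h_{\mathrm{IS}}\right)\!\circ P_R\circ F^{-1}(\y)\cdot\prod_{j\in\uu}\frac{dP_R(x_j)}{dx_j}\,\frac{dF_j^{-1}(y_j)}{dy_j}.
\end{equation}
This is legitimate because $h_{\mathrm{IS}}\circ P_R\circ F^{-1}\in\mathcal{S}^d([0,1]^d)$, which follows exactly as in Lemma~\ref{lemma:smooth}. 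I would then bound the three distinct factor types separately.

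First, the leading factor $\left(\partial^{\uu}h_{\mathrm{IS}}\right)\circ P_R\circ F^{-1}(\y)$ is evaluated at the point $P_R(\x)$, which may be anywhere in $[-R,R]^d$; I would simply invoke Lemma~\ref{lemma:derivativeis} together with Remark~\ref{rek:deriveis}, which already guarantee that $\sup_{\uu\subseteq 1{:}d}\abs{\partial^{\uu}h_{\mathrm{IS}}(\x)}$ is dominated, uniformly over all $\x\in\R^d$, by a single constant $C(M,B,d)$ depending only on $M,B,d$ (this uses $M<1/2$ so that the Gaussian factor $\eexp{-(1/2-M)\abs{\x}^2}$ forces decay at infinity). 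Hence this factor contributes at most $C(M,B,d)$. Second, for the projection derivatives I would use condition~\ref{condition:proj3} of Lemma~\ref{lemma:projection}, namely $\abs{dP_R(x_j)/dx_j}\le\mathbf{1}_{\{\abs{x_j}\le R\}}$. Third, the inverse-CDF derivative is $dF_j^{-1}(y_j)/dy_j=1/g_j(F_j^{-1}(y_j))=1/g_j(x_j)$.

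Combining these three bounds gives
\begin{equation}\notag
    \abs{\partial^{\uu}\!\left(h_{\mathrm{IS}}\circ P_R\circ F^{-1}(\y)\right)}
    \le C(M,B,d)\prod_{j\in\uu}\mathbf{1}_{\{\abs{x_j}\le R\}}\,\frac{1}{g_j(x_j)},
\end{equation}
and the crucial observation is that the indicator $\mathbf{1}_{\{\abs{x_j}\le R\}}$ confines each surviving factor $1/g_j(x_j)$ to the range $\abs{x_j}\le R$, so it is dominated by $\sup_{\abs{x_j}\le R}1/g_j(x_j)$. Taking the supremum over $\y\in[0,1]^d$ then yields~\eqref{eq:ismax}. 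I expect the only genuinely nontrivial point to be the uniform global bound on the mixed derivatives of $h_{\mathrm{IS}}$; but since Remark~\ref{rek:deriveis} has already packaged this into the constant $C(M,B,d)$, the remaining steps are routine, and the role of the projection operator is precisely to supply the cutoff that converts the (possibly unbounded) tail behaviour of $1/g_j$ into the finite quantity $\sup_{\abs{x_j}\le R}1/g_j(x_j)$.
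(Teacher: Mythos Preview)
Your proposal is correct and follows essentially the same approach as the paper's proof: the paper also applies the coordinatewise chain rule, bounds $\abs{P_R'(x_j)}$ by $\mathbf{1}_{\{\abs{x_j}\le R\}}$ via Lemma~\ref{lemma:projection}\ref{condition:proj3}, writes $dF_j^{-1}(y_j)/dy_j=1/g_j(x_j)$, and then invokes Lemma~\ref{lemma:derivativeis} and Remark~\ref{rek:deriveis} to absorb $(\partial^{\uu}h_{\mathrm{IS}})\circ P_R\circ F^{-1}(\y)$ into the constant $C(M,B,d)$. If anything, your write-up is slightly more explicit about why the indicator restricts the supremum of $1/g_j$ to $\abs{x_j}\le R$.
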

\begin{proof}
    Note that 
    \begin{align}
        \partial^{\boldsymbol{u}}\left( h_{\mathrm{IS}}\circ P_R \circ F^{-1}(\y)\right) &= \left( \partial^{\boldsymbol{u}}h_{\mathrm{IS}}\right)\circ P_R \circ F^{-1}(\y) \cdot \prod_{j \in  \boldsymbol{u}}\left( P_R^{\prime}(x_j)\frac{1}{g_j(x_j)}\right) \notag\\
        &\le \left( \partial^{\boldsymbol{u}}h_{\mathrm{IS}}\right)\circ P_R \circ F^{-1}(\y)\cdot \prod_{j \in \boldsymbol{u}}\left( \mathbf{1}_{\left\{ |x_j| \le R\right\}}\frac{1}{g_j(x_j)}\right)\label{eq:lemmasup},
    \end{align}
    where $ \y = (y_1,\dots,y_d)$ and $ x_j = F_{j}^{-1}(y_j)$. By Lemma~\ref{lemma:derivativeis} and Remark~\ref{rek:deriveis},~\eqref{eq:ismax} follows from~\eqref{eq:lemmasup}.
\end{proof}
% the introduction of scramble net of owen

Furthermore, we impose some specific restrictions on the importance density. Assumme $ \tau$ is square integrable. We consider separately the cases where the importance density has polynomial and exponential growth.   Define the following two sets: 
    \begin{align}
        V_p(k) : = \bigg\{ (h,\tau) : \frac{h}{g} \in G_e(M,B,2),  \ \max_{1\le j \le d} \frac{1}{g_j (x)} \le  M_1|x|^{k}+B_1\ 
        \text{and}\ \E \left|\tau\right|^2\le A\bigg\}\notag,
    \end{align}
    \begin{align}
        V_e(k): = \bigg\{ (h,\tau) : \frac{h}{g} \in G_e(M,B,2),  \ \max_{1\le j \le d} \frac{1}{g_j (x)} \le B_2e^{M_2|x|^{k}}\ 
        \text{and}\ \E\left|\tau\right|^2\le A\bigg\}\notag.
    \end{align}
As before, we can balance the errors by choosing an appropriate projection radius $ R$ and drive the convergence rate for the IS-based RMQC method.    
\begin{theorem}\label{them:isPM-RQMC}
    Let $ \{\y_1,\dots,\y_n\}$ be a scrambled $(\lambda,t,m,d)$-net in base $ b$ with $ n = \lambda b^m$, and suppose $ M < 1/2$. We have
    \begin{equation}\label{eq:ISRQMCresult}
        \sup_{(h,\tau) \in V_p(k)}\E\left[ (\widehat I_n(h_{\mathrm{IS}}) - \E\left[h(W)\right])^2\right] = O\left(n^{-3}(\log n)^{(k+1)d}\right),
    \end{equation}  
    and 
    \begin{equation}\notag
        \sup_{(h,\tau) \in V_e(k)}\E\left[( \widehat I_n(h_{\mathrm{IS}}) - \E\left[h(W)\right])^2\right] = O\left(n^{-3}(\log n)^{d}\eexp{2dM_2\left( \frac{3\log n}{1-2M}\right)^{\frac{k}{2}}}\right).
    \end{equation} 
\end{theorem}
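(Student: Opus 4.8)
The plan is to reuse the bias--variance decomposition from the proof of Corollary~\ref{cor:pqmc}, using the smoothed projection $P_R$ as an intermediate device even though the estimator $\widehat I_n(h_{\mathrm{IS}})$ itself contains no projection. Write $\mu := \E[h(W)] = \E[h_{\mathrm{IS}}(\tau)]$, where the second equality is the importance sampling identity behind~\eqref{eq:isdef}, and $\mu_R := \E[h_{\mathrm{IS}}\circ P_R(\tau)]$. Inserting $\widehat I_n^R(h_{\mathrm{IS}})$ from~\eqref{eq:is-pqmc} together with $\mu_R$ and using $(a+b+c)^2\le 3(a^2+b^2+c^2)$, I split
\begin{align}
    \E\left[(\widehat I_n(h_{\mathrm{IS}}) - \mu)^2\right] \le{}& 3\,\E\left[\big(\widehat I_n(h_{\mathrm{IS}}) - \widehat I_n^R(h_{\mathrm{IS}})\big)^2\right] + 3\,\E\left[\big(\widehat I_n^R(h_{\mathrm{IS}}) - \mu_R\big)^2\right] \notag\\
    &{}+ 3\,(\mu_R - \mu)^2 \notag
\end{align}
into a sample error, an RQMC variance, and a projection bias, each of which will depend on the projection radius $R$.

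First I would dispatch the two projection-type terms. For the sample error, Cauchy--Schwarz gives $(\tfrac1n\sum_j D_j)^2\le \tfrac1n\sum_j D_j^2$ with $D_j = (h_{\mathrm{IS}} - h_{\mathrm{IS}}\circ P_R)\circ F^{-1}(\y_j)$; since each $\y_j\sim U[0,1]^d$ by Proposition~\ref{prop:scr2}, taking expectations collapses this to $\E[(h_{\mathrm{IS}}(\tau) - h_{\mathrm{IS}}\circ P_R(\tau))^2]$, exactly as in Corollary~\ref{cor:pqmc}. The projection bias is handled by Jensen, $(\mu_R-\mu)^2\le \E[(h_{\mathrm{IS}}\circ P_R(\tau) - h_{\mathrm{IS}}(\tau))^2]$. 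Lemma~\ref{lemma:isintes} then bounds both by $O\big((R-1)^2\eexp{-(1-2M)(R-1)^2}\big)$, valid once $R\ge 1+1/\sqrt{1-2M}$, which is exactly where $M<1/2$ is used.

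For the middle term I use that scrambling keeps $\widehat I_n^R(h_{\mathrm{IS}})$ unbiased for $\mu_R$ (Proposition~\ref{prop:scr2}), so it equals $\bvar{\widehat I_n^R(h_{\mathrm{IS}})}$. Since $h_{\mathrm{IS}}\circ P_R\circ F^{-1}\in\mathcal{S}^d([0,1]^d)$ by the same argument as Lemma~\ref{lemma:smooth}, Lemma~\ref{lemma:owen} bounds this variance by $C\,n^{-3}(\log n)^{d-1}\max_{\uu}\|\partial^{\uu}(h_{\mathrm{IS}}\circ P_R\circ F^{-1})\|_\infty^2$, and Lemma~\ref{lemma:isinftynorm} rewrites the infinity norm as a product of $\sup_{|x_j|\le R}1/g_j(x_j)$ factors. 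The two cases now split: on $V_p(k)$ the bound $\sup_{|x|\le R}1/g_j(x)\le M_1R^k+B_1$ makes the variance $O(n^{-3}(\log n)^{d-1}R^{2kd})$, while on $V_e(k)$ the bound $B_2\eexp{M_2R^k}$ makes it $O(n^{-3}(\log n)^{d-1}\eexp{2dM_2R^k})$.

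The final step is to balance by choosing $R = \sqrt{\tfrac{3\log n}{1-2M}}+1$, so that $\eexp{-(1-2M)(R-1)^2}=n^{-3}$ pushes the sample error and projection bias down to $O(n^{-3}\log n)$, while $R=O(\sqrt{\log n})$ keeps $R^{2kd}=O((\log n)^{kd})$ in the polynomial case and $\eexp{2dM_2R^k}\sim \eexp{2dM_2(3\log n/(1-2M))^{k/2}}$ in the exponential case. Collecting the three bounds gives $O(n^{-3}(\log n)^{(k+1)d-1})$ on $V_p(k)$ and $O(n^{-3}(\log n)^{d-1}\eexp{2dM_2(3\log n/(1-2M))^{k/2}})$ on $V_e(k)$, and enlarging the exponent of the logarithm yields the two stated rates. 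I expect the balancing to be the crux: the scrambled-net prefactor $n^{-3}$ from Lemma~\ref{lemma:owen} only survives because $R$ may be taken as slowly as $\sqrt{\log n}$, which is precisely what keeps the $R$-dependent growth of the infinity norm sub-polynomial in $n$ (polylog for $V_p$, and $\eexp{c(\log n)^{k/2}}=o(n^{\eps})$ for $0<k<2$ in $V_e$). The delicate point is verifying that this diverging exponential factor still grows strictly slower than any positive power of $n$, so it cannot spoil the resulting $n^{-3/2}$ RMSE rate.
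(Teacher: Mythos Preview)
Your proposal is correct and follows essentially the same route as the paper: the same three-term decomposition via $\widehat I_n^R(h_{\mathrm{IS}})$ and $\mu_R$, the same reduction of the sample error and projection bias to $\E[(h_{\mathrm{IS}}\circ P_R(\tau)-h_{\mathrm{IS}}(\tau))^2]$ via Cauchy--Schwarz/Jensen and Proposition~\ref{prop:scr2}, the same variance bound through Lemma~\ref{lemma:owen} and Lemma~\ref{lemma:isinftynorm}, and the identical choice $R=\sqrt{3\log n/(1-2M)}+1$. The paper only writes out the $V_p(k)$ case explicitly; your sketch also spells out the $V_e(k)$ balancing, which is a faithful extension of the same argument.
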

\begin{proof}
    It suffices to prove~\eqref{eq:ISRQMCresult}. Let $ (h,\tau) \in V_p(k)$ and let $ \{\y_1,\dots,\y_n\}$ be a scrambled $ \left( 
    \lambda, t,m,d\right)$-net. We have
    \begin{align}\label{eq:decomposition}
        \E\bigg[ & \left(\widehat I_n(h_{\mathrm{IS}}) - \E\left[h(W)\right]\right)^2\bigg]\le 3\E\bigg[\left(\widehat I_n(h_{\mathrm{IS}}) - \widehat I_n^R(h_{\mathrm{IS}})\right)^2\bigg]\\
        &+ 3\E\bigg[\left(\widehat I_n^R(h_{\mathrm{IS}})-\E\left[h_{\mathrm{IS}}\circ P_R(\tau) \right] \right)^2\bigg] +3\left(\E\left[h_{\mathrm{IS}}\circ P_R(\tau)\right] - \E\left[h(W)\right] \right)^2\notag.
    \end{align}
    For the first term of the right hand side of~\eqref{eq:decomposition}, 
    \begin{align}
         \E\bigg[\left( \widehat I_n(h_{\mathrm{IS}})- \widehat I_n^R(h_{\mathrm{IS}})\right)^2\bigg] &= \E\left[\left( \frac{1}{n} \sum_{j =1}^n\left( h_{\mathrm{IS}}\circ P_R\circ F^{-1}(\y_j) - h_{\mathrm{IS}}\circ F^{-1}(\y_j)\right)\right)^2\right] \notag\\
         &\le \frac{1}{n^2} n \sum_{j =1}^n \E\left[\left( h_{\mathrm{IS}}\circ P_R\circ F^{-1}(\y_j) - h_{\mathrm{IS}}\circ F^{-1}(\y_j) \right)^2\right] \label{eq:term1-1}\\
         & = \E \left[\left( h_{\mathrm{IS}}\circ P_R(\tau) - h_{\mathrm{IS}}(\tau)\right)^2\right], \label{eq:term1-2}
    \end{align}
    where in~\eqref{eq:term1-1}, we use the Cauchy–Schwarz inequality, and~\eqref{eq:term1-2} follows from that every $ \y_j$ has the uniform distribution on $ [0,1]^d$ (see Proposition~\ref{prop:scr2}).

    Note that the second term of the right hand side of~\eqref{eq:decomposition} is $ 3\var{\widehat I_n^R(h_{\mathrm{IS}})}$ and the third term is bounded by $ 3\E \left[( h_{\mathrm{IS}}\circ P_R(\tau) - h_{\mathrm{IS}}(\tau))^2\right]$. Therefore, we obtain
    \begin{align}
        \E\bigg[\left( \widehat I_n(h_{\mathrm{IS}}) - \E\left[h(W)\right]\right)^2\bigg] \le 3\var{\widehat I_n^R(h_{\mathrm{IS}})} + 6\E \left[\big( h_{\mathrm{IS}}\circ P_R(\tau) - h_{\mathrm{IS}}(\tau)\big)^2\right].\label{eq:term2}
    \end{align}
    Using the results in Lemma~\ref{lemma:owen} and Lemma~\ref{lemma:isinftynorm}, when $ n $ is large enough so that $ m> t+d$, the first term of the right hand side of~\eqref{eq:term2} has the following upper bounds.
    \begin{align}
        3\var{\widehat I_n^R(h_{\mathrm{IS}})} &\le C \max_{\boldsymbol{u}\subseteq 1{:}d}\| \partial^{\boldsymbol{u}}(h_{\mathrm{IS}}\circ P_R \circ F^{-1})\|_{\infty}^{2}\frac{(\log n)^{d-1}}{n^3}\notag\\
        &\le C\prod_{j = 1}^{d} \sup_{|x_j|\le R} \frac{1}{g_j^2(x_{j})} \frac{(\log n)^{d-1}}{n^3}\label{eq:the1-3}\\
        &\le C\left( M_1 R^{k}+B_1\right)^{2d}\frac{(\log n)^{d-1}}{n^3}\label{eq:the1-4},
    \end{align}
    where $ C$ is a constant only depends on $  M,B,M_1,t,b,d$. The inequality~\eqref{eq:the1-3} follows from Lemma~\ref{lemma:isinftynorm}, and~\eqref{eq:the1-4} follows from $ \max_{1\le j \le d} \frac{1}{g_j} \le M_1|x|^k +B_1$.
    
    By Lemma~\ref{lemma:isintes},
    \begin{align}
        6\E \left[\left( h_{\mathrm{IS}}\circ P_R(\tau) - h_{\mathrm{IS}}(\tau)\right)^2\right] \le 96AB^2d(R-1)^2\eexp{-(1-2M)(R-1)^2}.\label{eq:term3}
    \end{align}
    To balance two terms~\eqref{eq:the1-4} and~\eqref{eq:term3}, we choose 
    \begin{equation}\notag
         R = \sqrt{\frac{3}{1-2M}\log n} + 1.
    \end{equation}
    Therefore,~\eqref{eq:the1-4} achieves $ O\left( n^{-3}(\log n)^{(k+1)d-1}\right)$, and~\eqref{eq:term3} achieves $ O\left( n^{-3}\log n\right)$. This proves the desired result.
\end{proof}

\begin{Remark}
    It is important to note that without importance sampling, the RQMC method cannot achieve a higher convergence rate of $ O(n^{-3/2+\eps})$, due to the infinity norm of the derivatives of the integrand cannot be controlled by a constant and the right hand side of~\eqref{eq:ismax} is $ O(\eexp{dR^2})$, thus we can not find a projection radius $ R$ to make the projection error and QMC error both converge at a rate of $ O(n^{-3/2+\eps}).$
\end{Remark}

The conditions for Theorems~\ref{them:isPM-QMC} and~\ref{them:isPM-RQMC} are about $ h$ and $ g$. If we fix an appropriate importance density, the conditions of the theorems can be directly restricted to $ h$. The following section will provide a detailed discussion on this perspective. Note that, the next section uses the $ t$-distribution as an example of an IS proposal. However, our framework is highly versatile, and many distributions with heavier tails than the normal distribution may meet the conditions of Theorem~\ref{them:isPM-RQMC} and be used as an IS proposal.

\subsection{The choice of the IS density}\label{subsec:ISchoice}
We use a heavy-tailed distribution as the IS proposal to accelerate the convergence rate. We introduce a class of importance densities that satisfy the conditions of the above theorems. Take the $ t$-distribution as the proposal of importance sampling. The density function of $\tau$ is 
\begin{equation}\label{eq:ISt}
    g(\x) = \prod_{j = 1}^d g_j(x_j) =  \prod_{j =1}^d \frac{\Gamma(\frac{\nu_j+1}{2})}{\sqrt{\nu_j\pi}\Gamma(\frac{\nu_j}{2})}\left( 1+\frac{x_j^2}{\nu_j}\right)^{-(\nu_j+1)/2}.
\end{equation}
Each component of $ \tau$ is a $ t$-distribution with parameter $ \nu_j$. Let $ \nu = \max_j \nu_j$. We can easily verify that there exist $ M(\nu)>0$ and $ B(\nu)>0$, such that
\begin{equation}\notag
    \max_{1\le j\le d} \frac{1}{g_j(x)} = \max_{1\le j \le d} \frac{\sqrt{\nu_j\pi}\Gamma(\frac{\nu_j}{2})}{\Gamma(\frac{\nu_j+1}{2})}\left( 1+\frac{x^2}{\nu_j}\right)^{(\nu_j+1)/2} \le M(\nu)|x|^{\nu+1} + B(\nu) ,
\end{equation}
and 
\begin{equation}\notag
    \frac{1}{g} \in  G_p(M(\nu),B(\nu),\nu+1).
\end{equation}
Therefore, for any function $ h\in G_e(M,B,2)$ with $ M < 1/2$, using the (3) of Theorem~\ref{them:operation} in Appendix, we have that for any $ \eps > 0 $, there exist $ B(\eps)>0$, such that
\begin{equation}\notag
    h/g \in  G_e(M+\eps,B(\eps),2).
\end{equation}
Note that $ \E \left|\tau\right|^2< \infty$ when $ \nu_j \ge 3$ for all $ 1\le j \le d$. By choosing the $ \eps $ small enough so that $M+\eps<1/2$ and $ \nu_j \ge 3$, we obtain 
$$ (h,\tau) \in V_p(\nu+1).$$ 
Therefore, it satisfies the conditions in Theorems~\ref{them:isPM-QMC} and~\ref{them:isPM-RQMC}, and we can drive the following theorem.
\begin{theorem}\label{them:fixIS}
    Assume $ g$ satisfies~\eqref{eq:ISt} with $ \nu_j \ge 3$ for all $ 1 \le j \le d$ and $ M < 1/2$. Let $ \nu = \max_j \nu_j$. \\
    (\uppercase\expandafter{\romannumeral1}) If $ \{\y_1,\dots,\y_n\}$ is a low-discrepancy point set, then by choosing $ R = \sqrt{\frac{2\log n}{1-2M}}+1$, we have
    \begin{equation}\notag
        \sup_{h\in G_e(M,B,2)} \bigg|\widehat I_n^R(h_{\mathrm{IS}}) - \E\left[ h(W)\right]\bigg| = O(n^{-1}(\log n)^{\frac{3d}{2}-1}).
    \end{equation}
    (\uppercase\expandafter{\romannumeral2}) If $ \{\y_1,\dots,\y_n\}$ is a scrambled $(\lambda,t,m,d)$-net in base $ b$ with $ n = \lambda b^m$, then
    \begin{equation}\notag
        \sup_{h\in G_e(M,B,2)} \E\left[\left( \widehat I_n(h_{\mathrm{IS}}) - \E\left[ h(W)\right]\right)^2\right] = O(n^{-3}(\log n)^{(\nu+2)d}).
    \end{equation}
\end{theorem}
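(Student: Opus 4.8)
The plan is to recognize that this theorem is a specialization of the two preceding ones: once I verify that the pair $(h,\tau)$, with $\tau$ carrying the $t$-density~\eqref{eq:ISt}, lies in the class $V_p(\nu+1)$, both conclusions follow by directly invoking Theorem~\ref{them:isPM-QMC} for part (I) and Theorem~\ref{them:isPM-RQMC} for part (II). So essentially all the work is to establish the membership $(h,\tau)\in V_p(\nu+1)$ under the stated hypotheses, and the proof need only collect the pieces already assembled in the discussion preceding the statement.

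First I would check the three defining requirements of $V_p(\nu+1)$. The polynomial bound on the reciprocal density comes from the explicit form~\eqref{eq:ISt}, which gives $\max_{j} 1/g_j(x) \le M(\nu)|x|^{\nu+1}+B(\nu)$, i.e.\ $1/g \in G_p(M(\nu),B(\nu),\nu+1)$, supplying the required $\max_j 1/g_j(x)\le M_1|x|^{\nu+1}+B_1$. For the exponential condition on the ratio, since $h\in G_e(M,B,2)$ with $M<1/2$ and $1/g$ has polynomial growth, item (3) of Theorem~\ref{them:operation} in the Appendix yields $h/g\in G_e(M+\eps,B(\eps),2)$ for every $\eps>0$; I would fix $\eps$ small enough that $M+\eps<1/2$. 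Finally, each $\tau_j$ is a $t$-variate with $\nu_j\ge 3$ degrees of freedom and hence has finite variance, so $\E|\tau|^2\le A$ for some finite $A$. These three facts together give $(h,\tau)\in V_p(\nu+1)$ with ratio growth rate $M+\eps<1/2$.

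With the membership in hand, part (II) is immediate: applying Theorem~\ref{them:isPM-RQMC} with $k=\nu+1$ gives the rate $O(n^{-3}(\log n)^{(k+1)d})=O(n^{-3}(\log n)^{(\nu+2)d})$. For part (I) I would invoke Theorem~\ref{them:isPM-QMC}, whose conclusion $O(n^{-1}(\log n)^{3d/2-1})$ is independent of the ratio growth rate. The only care needed is that the prescribed radius $R=\sqrt{2\log n/(1-2M)}+1$ is stated in terms of $M$ rather than the effective rate $M+\eps$; since the admissibility threshold of Lemma~\ref{lemma:isintes} is $R\ge 1+1/\sqrt{1-2(M+\eps)}$ and $M+\eps<1/2$, this $R$ is admissible for all large $n$, and substituting it into the error decomposition of Theorem~\ref{them:isPM-QMC} still balances the projection and QMC errors at the claimed rate.

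The step I expect to require the most care is precisely this bookkeeping of the growth-rate shift $M\mapsto M+\eps$: the importance ratio $h/g$ is only guaranteed to lie in $G_e(M+\eps,\cdot,2)$, yet the radius and the admissibility condition are phrased with $M$. I would argue that because $M<1/2$ is strict one can always pick $\eps$ with $M+\eps<1/2$, after which the exponential factors $\eexp{-(1-2(M+\eps))(R-1)^2}$ remain controlled against the prescribed $R$ and the $\eps$-dependence is absorbed into the hidden constants, leaving the asymptotic orders $O(n^{-1}(\log n)^{3d/2-1})$ and $O(n^{-3}(\log n)^{(\nu+2)d})$ untouched. No genuinely new estimate is needed beyond those already proved.
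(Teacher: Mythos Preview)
Your proposal is correct and mirrors the paper's own argument essentially verbatim: the paper does not give a separate proof of this theorem but derives it from the preceding paragraph, which checks exactly the three membership conditions you list (polynomial growth of $1/g_j$ with exponent $\nu+1$, the inclusion $h/g\in G_e(M+\eps,B(\eps),2)$ via Theorem~\ref{them:operation}(3), and $\E|\tau|^2<\infty$ for $\nu_j\ge 3$) and then invokes Theorems~\ref{them:isPM-QMC} and~\ref{them:isPM-RQMC}. Your attention to the $M\mapsto M+\eps$ bookkeeping in the prescribed radius is in fact more careful than the paper, which passes over this point silently.
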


\begin{Remark}
    If we choose such an importance density with polynomial growth, then IS-based P-QMC or IS-based RQMC method can handle the case where the function $ h \in G_e(M,B,2), M < 1/2$, which is not ``QMC-friendly", and achieve the convergence rate of $ O(n^{-1+\eps})$ and $O(n^{-3/2+\eps}) $, respectively. However, by the results of Owen~\cite{owen2006a}, the convergence rate of QMC is  $ O(n^{-1+2M+\eps})$ without IS. This shows that, IS does accelerate the convergence rate in QMC.
\end{Remark}

\section{Numerical results}\label{sec:numerical}

In numerical experiments, we use the Sobol' sequence, whose first $ 2^m$ points constitute a $ \left( t,m,d\right)$-net in base $ 2$, and use the scrambled Sobol' sequence in RQMC methods.

We focus on comparing the convergence results of RQMC and IS-based RQMC on the fast growth class. We use the test function 
\begin{equation}\notag
    h(\x) = C\eexp{M\left| \x\right|^2},
\end{equation}
where $ 0< M < 1/2$ has an impact on the boundary growth and we take $C = \left( 1-2M\right)^{d/2}$ to ensure $ \E[h(W)] = 1$ for all $d\ge 1$. It is easy to verify that this function does not satisfy the ``QMC-friendly'' condition, and so using RQMC directly without IS will result in a convergence rate of $ O\left( n^{-1+2M+\eps}\right)$. We take the $ t$-distribution with $ \nu = 3$ as the proposal of importance sampling (see details in Section~\ref{subsec:ISchoice}). In comparison, the convergence rate of IS-based RQMC is imporved to $ O\left( n^{-3/2+\eps}\right)$. RMSEs in the following numerical results are computed based on 100 independent repetitions.

\begin{figure}[htbp]
  \centering 
\includegraphics[height=8cm,keepaspectratio]{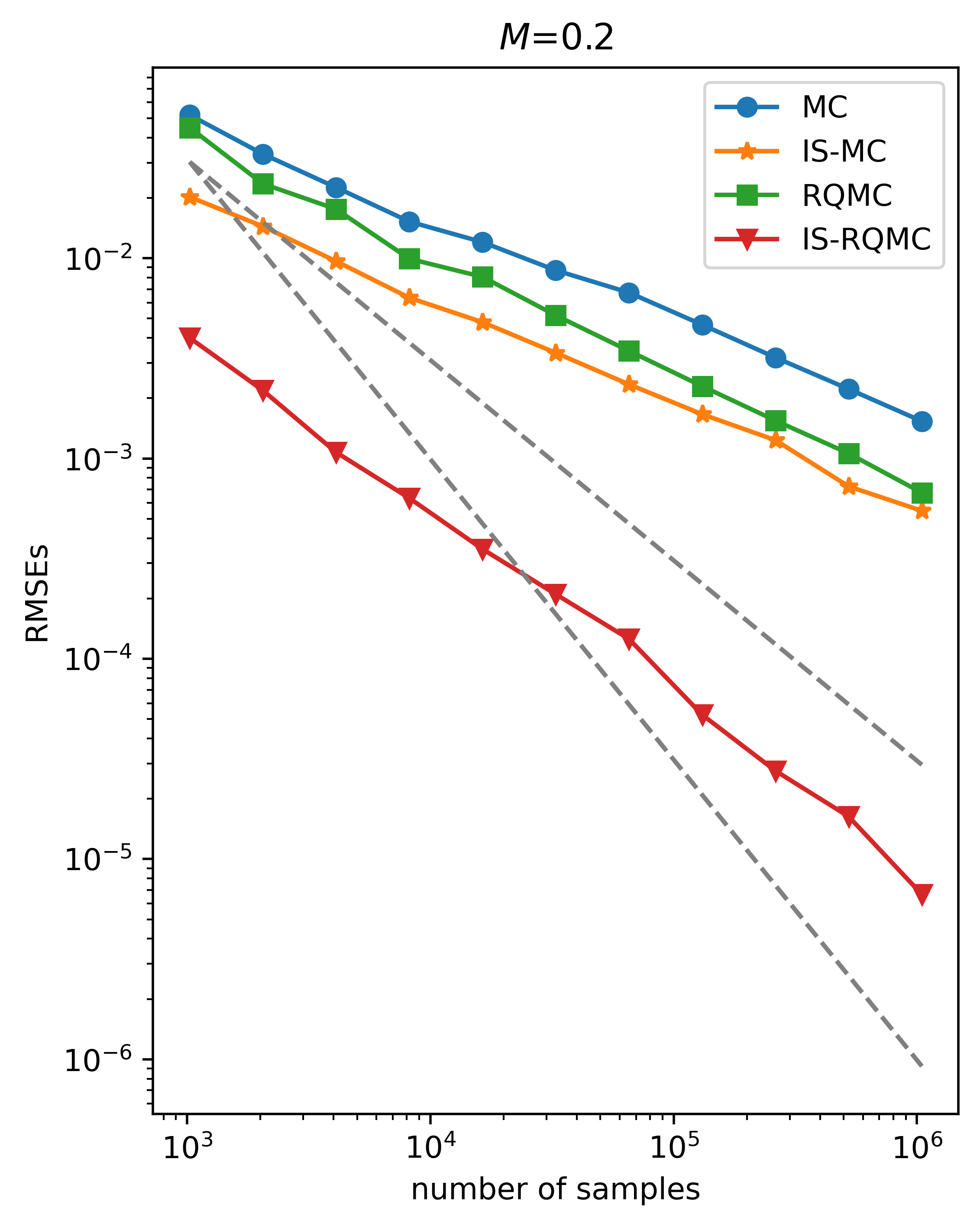}
 \hfill  
\includegraphics[height=8cm,keepaspectratio]{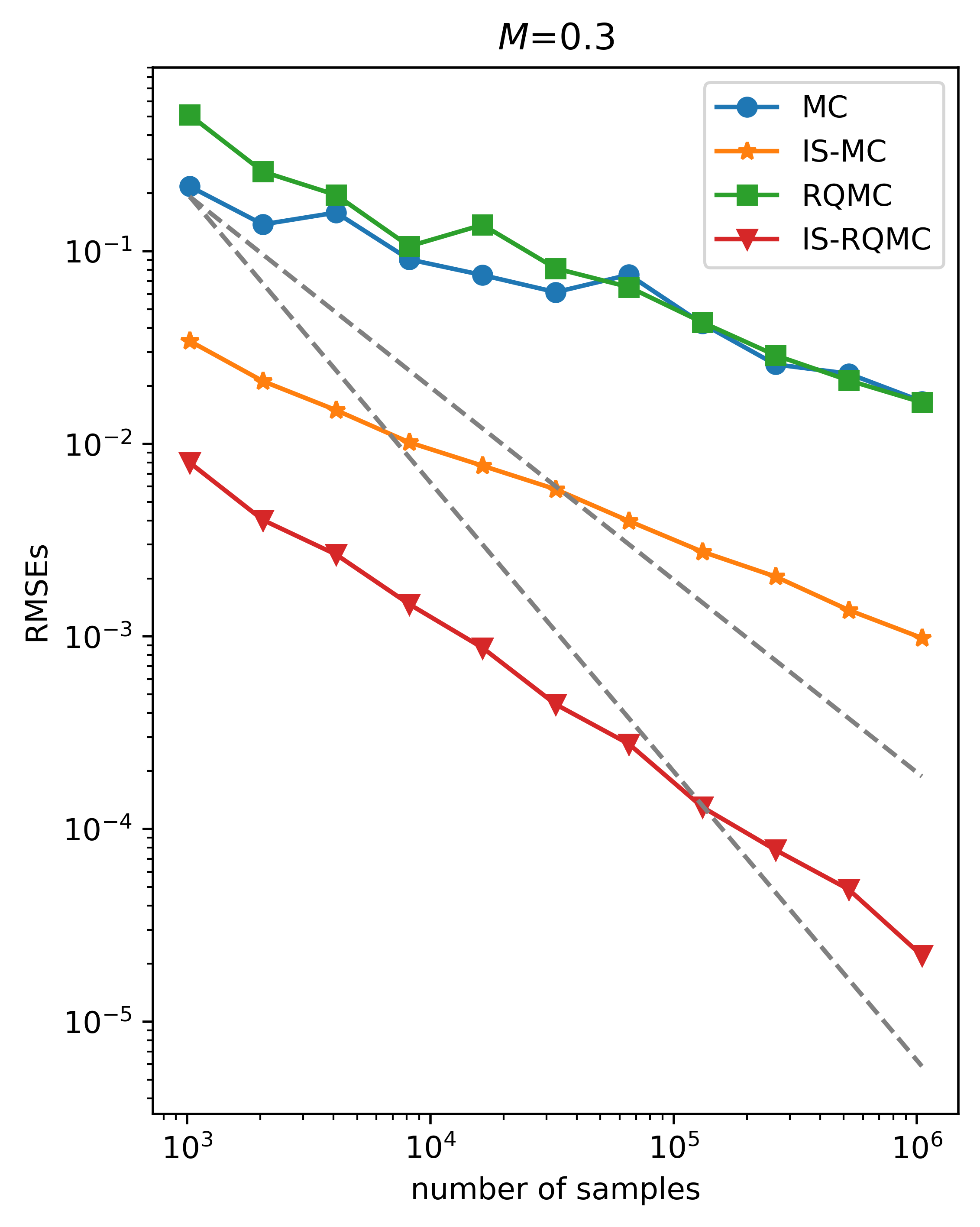}
  \caption{RMSEs for the test function $ h$ with $ d = 5$ and $ \nu = 3$. The RMSEs are computed based on $ 100$ repetitions. The slopes of the gray dashed lines are $ -1$ and $ -3/2$.}
  \label{fig:fd=5}
\end{figure}

For $ d = 5$, we compare the RMSEs of MC, IS-based MC, RQMC and IS-based RQMC as the sample size increases in Figure~\ref{fig:fd=5}. The figure shows that IS-based RQMC has much smaller RMSE and better convergence rate than MC, IS-based MC and RQMC. 

Note that when $ M = 0.2$, the test function has fast growth and $ M < 1/2$. Without importance sampling, RQMC converges at a rate $ O\left( n^{-0.6+\eps}\right)$  and the convergence rate of MC is $ O(n^{-1/2})$, while if we use importance sampling, the convergence rate will reach $ O\left( n^{-3/2+\eps}\right)$. This numerical result agrees with our theoretical results.

When $ M= 0.3$, the variance of $ h(W)$ is infinite, so MC does not converge and RQMC has a bad performance. However, IS-based RQMC still has a good convergence rate when the sample is large enough. Note that after IS, the variance of $ h_{\mathrm{IS}}$ is finite, so the IS-based MC also converges.

\begin{figure}[htbp]
  \centering 
\includegraphics[height=8cm,keepaspectratio]{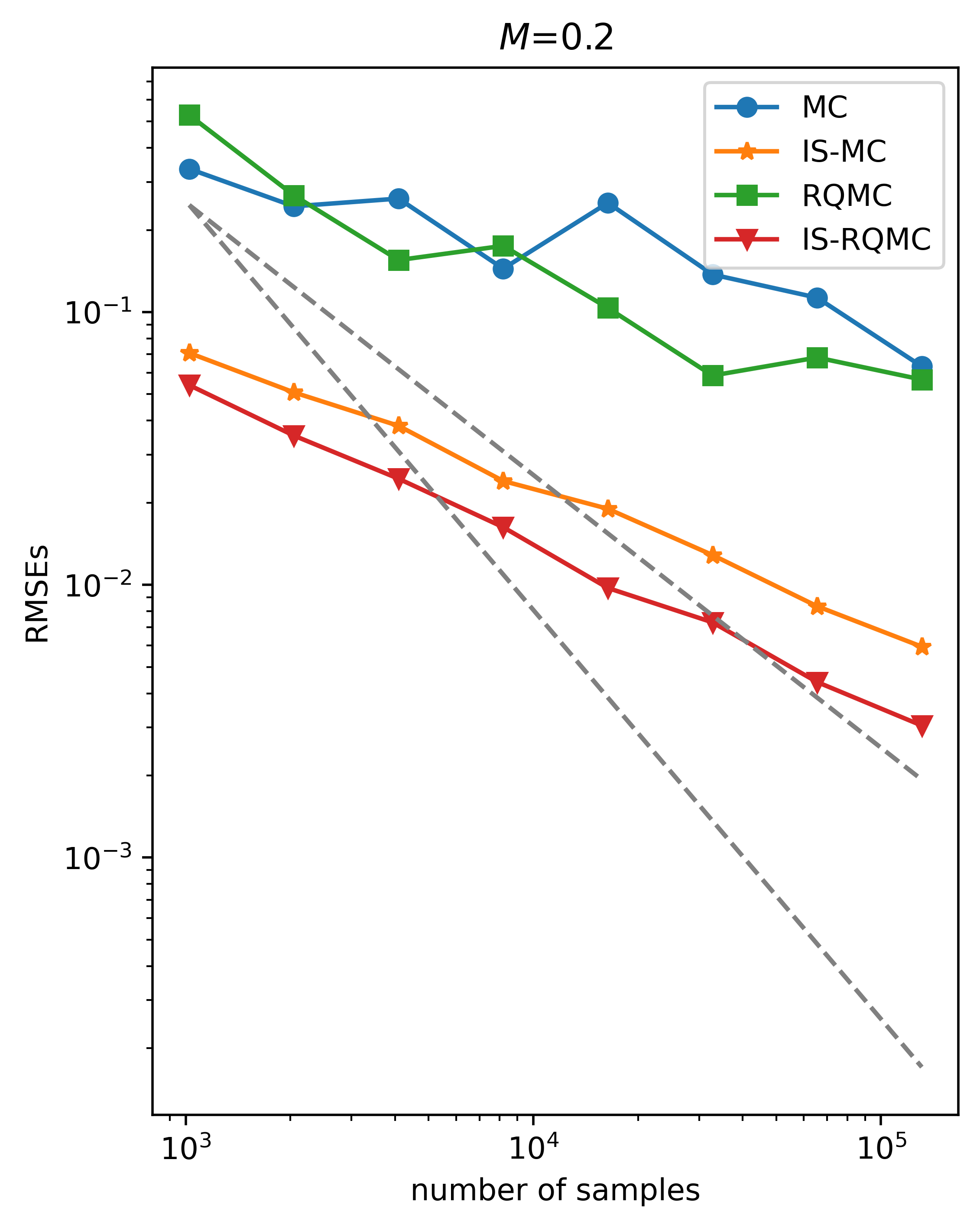}
 \hfill  
\includegraphics[height=8cm,keepaspectratio]{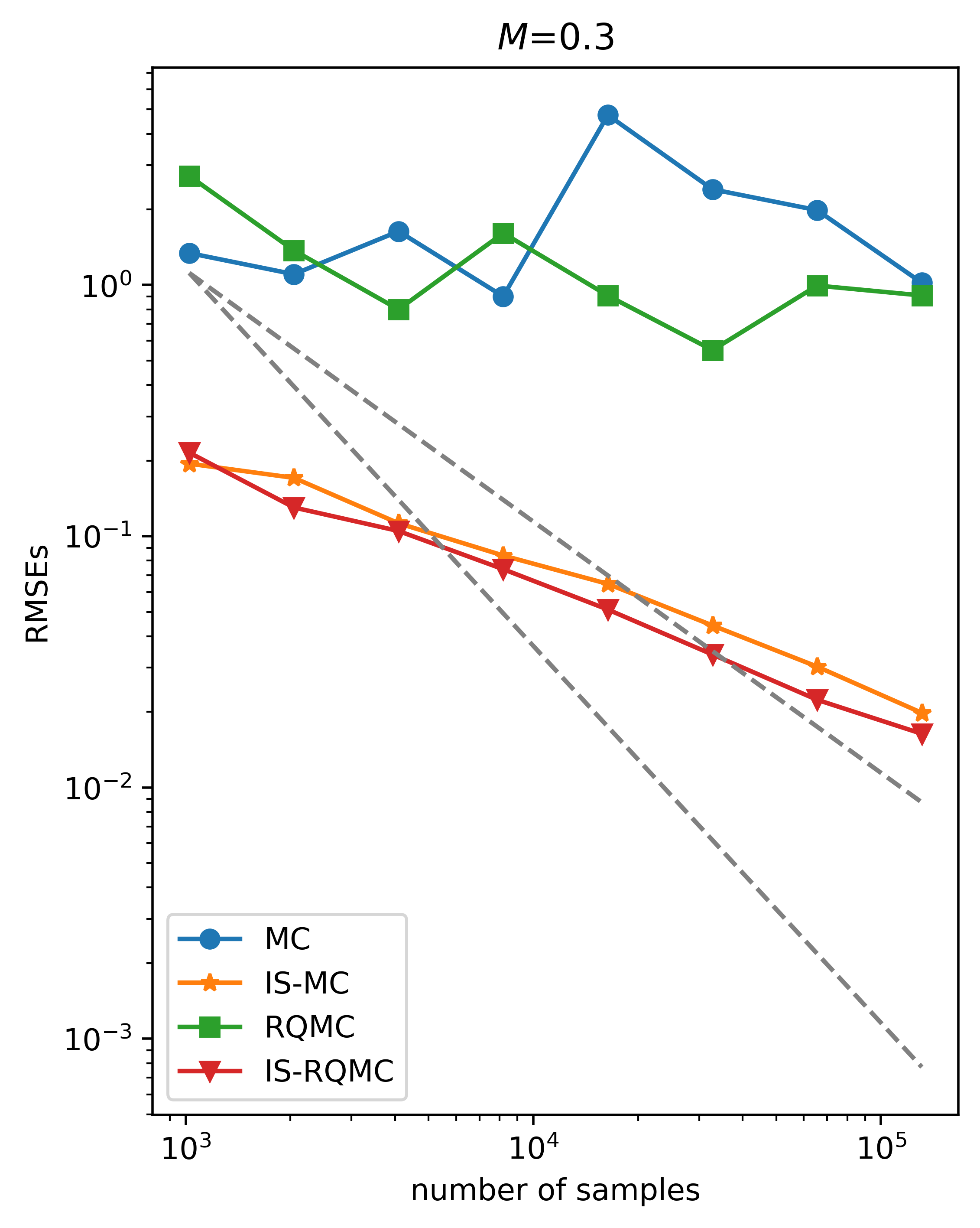}
  \caption{RMSEs for the test function $ h$ with $ d = 30$ and $ \nu = 3$. The RMSEs are computed based on $ 100$ repetitions. The slopes of the gray dashed lines are $ -1$ and $ -3/2$.}
  \label{fig:fd=30}
\end{figure}
When we increase $ d$ to 30, the  convergence rate of IS-based RQMC in then sense of RMSE is $ O(n^{-3/2}(\log n)^{75})$. Therefore, with a limited number of samples, the slope of the IS-based RQMC curve may not reach $ -3/2$. However, the numerical results in Figure~\ref{fig:fd=30} indicate that IS-based RQMC is still the most effective method.

Note that we only provided the $ t$-distribution as an example. In fact, any distribution that has heavier tails than the normal distribution, such as some distributions in the exponential family, may satisfy our conditions to be used as a proposal for IS and achieve a higher convergence rate of $ O(n^{-3/2+\eps})$. We find that reducing variance is not the only criterion for IS in QMC. The picture on the left side of Figure~\ref{fig:fd=5} shows that the effect of IS-based RQMC is significantly improved when the variance is not reduced too much. How to choose an IS proposal that yields better results for a given integrand, that will be a question for our future research.

\section{Conclusion}\label{sec:conclusion}
Using the projection based quasi-Monte Carlo method on various growth classes, we attained more refined results than Owen~\cite{owen2006a}'s. Our framework dictates that the convergence rate is contingent upon the  growth condition of the integrand. For integrand with fast growth, both QMC and MC methods manifest inferior performance. Nevertheless, by using importance sampling with a heavy-tailed proposal, we achieved better convergence rates. This assertion is corroborated by our numerical experiments. In this paper, we do not focus on reducing the Monte Carlo variance through importance sampling. It is desirable to develop a good importance sampling to reduce Monte Carlo variance while retaining the faster convergence rate of QMC. This is left for future work.
%For d=10, Figures~\ref{fig:figure3} and \ref{fig:figure4} show the MSE of MC, PM-RQMC and importance sampling-based PM-RQMC as the sample size increases. QMC performs worse for high-dimensional problems without dimension reduction techniques, as the convergence rate depends on the dimension. This is evident from the figures, where PM-RQMC converges similarly or worse than MC. However, importance sampling-based PM-RQMC has much better MSE convergence than the other two methods. Its MSE in Figure~\ref{fig:figure4} is of the order of 103, but the relative RMSE is small because the theoretical solution is 3125. The figure also shows an ideal convergence trend, which will improve with larger sample sizes.

\section*{Appendix}
In many problems, the integrand that we consider, such as the loss function in deep learning, has a very complex form (it is obtained by some basic operations on different functions). We refer to Huré et al.~\cite{hure2020} and Beck et al.~\cite{beck2021} for details. For such a complex integrand, it is not easy to verify directly which growth class it belongs to, but the growth conditions of the functions that compose the integrand are easy to verify. Therefore, we need to discuss what kind of growth class we get after applying some operations on the basic growth classes. We use the following notation.

\begin{definition}
    If $S_1$ and $S_2$ are two function classes, then for $\otimes = +,\times,\circ$, we define
    \begin{equation}\notag
        S_1 \otimes S_2 := \left\{h_1\otimes h_2:\ h_1\in S_1,\ h_2 \in S_2\right\}.
    \end{equation}
    and we define the corresponding scalar operations for the case that $ S_1$ is a constant.
\end{definition}

The growth classes that we defined have properties similar to those of a linear space, except that the corresponding parameters may change. From Theorem~\ref{them:pm-qmc}, we know that for polynomial case, the convergence rate is determined by $ k$, and for exponential case, the convergence rate is determined by $ M$ and $ k$. Therefore, in the following calculations, we will focus on the changes of these coefficients, and the specific forms of other coefficients are irrelevant for our analysis.
\begin{theorem}\label{them:operation}
    Assume $ M,M_1,M_2,B,B_1,B_2,k,k_1,k_2$ are all positive. 
    \begin{itemize}
        \item[(1)] Scalar multiplication. 
        
        Assume $ c \ne 0$ is a constant. For the polynomial case,
        \begin{equation}
            c \times G_p(M,B,k) =  G_p(|c|M,|c|B,k).\notag
        \end{equation}
        For the exponential case, 
        \begin{equation}
            c \times G_e(M,B,k) =  G_e(M,|c|B,k).\notag
        \end{equation}
        
        \item[(2)] Addition.
        
        The first case is the addition of two polynomial growth classes. There exist $ M_3>0 $ and $ B_3>0$, such that
        \begin{equation}
            G_p(M_1,B_1,k_1) + G_p(M_2,B_2,k_2) \subseteq  G_p(M_3,B_3, \max\{k_1,k_2\} )\notag.
        \end{equation}
        The second case is the addition of two exponential growth classes. If $ k_1 > k_2$, then there exists $ B_3>0$, such that
        \begin{equation}
             G_e(M_1,B_1,k_1) +  G_e(M_2,B_2,k_2) \subseteq  G_e(M_1,B_3, k_1 )\notag.
        \end{equation}
        If $ k_1 = k_2=k$, then 
        \begin{equation}
            G_e(M_1,B_1,k) + G_e(M_2,B_2,k) \subseteq G_e(\max\{M_1, M_2\},B_1+B_2, k )\notag.
        \end{equation}
        The third case is the addition of an exponential growth class and a polynomial growth class. There exists $ B_3>0$, such that
        \begin{equation}
            G_e(M_1,B_1,k_1) + G_p(M_2,B_2,k_2) \subseteq G_e(M_1 ,B_3, k_1 )\notag.
        \end{equation}

        \item[(3)]\label{item:mul} Multiplication.

        The first case is the multiplication of two polynomial growth classes. There exist $ M_3>0 $ and $ B_3>0$, such that
        \begin{equation}
            G_p(M_1,B_1,k_1) \times G_p(M_2,B_2,k_2) \subseteq G_p(M_3,B_3, k_1 + k_2)\notag.
        \end{equation}
        The second case is the multiplication of two exponential growth classes. If $ k_1 > k_2$, then for any $ \eps > 0$, there exists $ B(\eps)>0$, such that
        \begin{equation}
            G_e(M_1,B_1,k_1) \times G_e(M_2,B_2,k_2) \subseteq G_e(M_1+\eps,B(\eps), k_1 )\notag.
        \end{equation}
        If $ k_1 = k_2$, then there exists $ B_3>0$, such that
        \begin{equation}
            G_e(M_1,B_1,k_1) \times G_e(M_2,B_2,k_2) \subseteq G_e(M_1 + M_2,B_3, k_1 )\notag.
        \end{equation}
        The third case is the multiplication of an exponential growth class and a polynomial growth class. For any $ \eps >0$, there exists $ B(\eps)>0$, such that
        \begin{equation}
             G_e(M_1,B_1,k_1) \times  G_p(M_2,B_2,k_2) \subseteq G_e(M_1 +\eps,B(\eps), k_1 )\notag.
        \end{equation}
        
    \end{itemize}
   % The dimensions are required to satisfied some conditions so that there is no conflict. Such as, in order to make the derivative is well-defined we require $d_1\ge d_2 $ and if $\otimes = \circ $ then $d_1 = l_2$.
\end{theorem}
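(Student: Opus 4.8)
The plan is to verify every inclusion pointwise, reducing each statement to an elementary comparison between the two dominating functions $M|\x|^k+B$ and $Be^{M|\x|^k}$. The structural feature I will exploit throughout is that every growth class is defined through $\sup_{\uu\subseteq 1{:}d}|\partial^{\uu}h|$, so a single scalar bound simultaneously controls all mixed partial derivatives; this is exactly what makes the product rule tractable in part (3).

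Scalar multiplication (1) is immediate. Since $\partial^{\uu}(ch)=c\,\partial^{\uu}h$, the bound $|\partial^{\uu}h|\le M|\x|^k+B$ scales to $|c|M|\x|^k+|c|B$ in the polynomial case, while the prefactor $B$ scales to $|c|B$ in the exponential case. Both the $\subseteq$ and $\supseteq$ directions of the equalities follow by running the same argument for $c$ and for $c^{-1}$ on $f$ and $f/c$ respectively. For addition (2), I would start from $|\partial^{\uu}(h_1+h_2)|\le|\partial^{\uu}h_1|+|\partial^{\uu}h_2|$ and bound the sum of the two dominating functions. The only genuine work is pointwise: for two polynomials, $M_2|\x|^{k_2}\le M_2|\x|^{\max\{k_1,k_2\}}+M_2$ (splitting at $|\x|=1$) absorbs the lower-degree term into the constant; for two exponentials with $k_1>k_2$, the estimate $e^{M_2|\x|^{k_2}}\le Ce^{M_1|\x|^{k_1}}$ holds because $M_2|\x|^{k_2}-M_1|\x|^{k_1}\to-\infty$; for equal orders one factors out $e^{\max\{M_1,M_2\}|\x|^k}$; and the mixed exponential-plus-polynomial case follows because a polynomial is dominated by any positive exponential.

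Multiplication (3) is the heart of the proof and the step I expect to be the main obstacle, since products of derivatives must be controlled. The key tool is the generalized Leibniz rule
\begin{equation}\notag
    \partial^{\uu}(h_1 h_2) = \sum_{\boldsymbol{v}\subseteq\uu} (\partial^{\boldsymbol{v}}h_1)(\partial^{\uu\setminus\boldsymbol{v}}h_2),
\end{equation}
which yields $|\partial^{\uu}(h_1 h_2)|\le 2^{|\uu|}\bigl(\sup_{\boldsymbol{v}}|\partial^{\boldsymbol{v}}h_1|\bigr)\bigl(\sup_{\boldsymbol{w}}|\partial^{\boldsymbol{w}}h_2|\bigr)$, because each of the at most $2^d$ summands is bounded by the product of the two growth functions precisely thanks to the sup-over-subsets definition. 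It then remains to bound this product of dominating functions. For polynomials the product $(M_1|\x|^{k_1}+B_1)(M_2|\x|^{k_2}+B_2)$ has its cross and lower-order terms absorbed into $M_3|\x|^{k_1+k_2}+B_3$ by the same splitting estimate. For exponentials the prefactors and exponents combine into $2^dB_1B_2\,e^{M_1|\x|^{k_1}+M_2|\x|^{k_2}}$; when $k_1=k_2$ the exponent is exactly $(M_1+M_2)|\x|^{k_1}$, while when $k_1>k_2$ I would invoke Young's inequality, as in the proof of Lemma~\ref{lemma:projectioninterror}, in the form $M_2|\x|^{k_2}\le\eps|\x|^{k_1}+C(\eps)$ to produce the inflated order $M_1+\eps$ and the $\eps$-dependent constant $B(\eps)$. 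The exponential-times-polynomial case is handled identically by bounding the polynomial factor by $C(\eps)e^{\eps|\x|^{k_1}}$. The one subtlety to track is that the factor $2^d$ and all absorbed constants remain finite and depend only on the fixed parameters, so that each resulting bound genuinely places the product in the claimed class.
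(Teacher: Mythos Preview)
Your proposal is correct and follows essentially the same approach as the paper: the paper likewise dismisses scalar multiplication as immediate, handles addition by bounding the sum of the dominating functions, and for multiplication writes out the Leibniz expansion $\partial^{\uu}(f\times g)=\sum_{\uu_1+\uu_2=\uu}\partial^{\uu_1}f\,\partial^{\uu_2}g$ and then invokes the pointwise inequality $|\x|^{k_2}e^{M|\x|^{k_1}}\le B(\eps)e^{(M+\eps)|\x|^{k_1}}$ to absorb the lower-order growth. Your use of Young's inequality to obtain $M_2|\x|^{k_2}\le\eps|\x|^{k_1}+C(\eps)$ is exactly the mechanism behind that inequality, and your explicit tracking of the factor $2^{|\uu|}$ just makes precise what the paper leaves implicit.
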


\begin{proof}
    The results of scalar multiplication are easy to verify. We briefly prove the other results.   For any $$f \in  G_p(M_1,B_1,k_1),\ g\in  G_p(M_2,B_2,k_2),$$ it is easy to verify that the result holds by choosing appropriate $(M_3,B_3)$ for addition $ + $. For multiplication $ \times$, it suffices to note that for fixed $\uu \subseteq 1{:}d$,
    \begin{equation}\notag
        |\partial^{\uu}(f\times g)|  = \left|\sum_{\uu_1+\uu_2 = \uu} \partial^{\uu_1}f\partial^{\uu_2}g\right| \le \sum_{\uu_1+\uu_2 = \uu} \left|\partial^{\uu_1}f \partial^{\uu_2}g\right| .
    \end{equation}
     As for exponential case, note that for any $ \eps > 0 $, there exists constant $ B$ which depends on $ \eps$, such that 
     \begin{equation}\notag
         |\x|^{k_2}e^{M|\x|^{k_1}} \le Be^{(M+\eps)|\x|^{k_1}}.
     \end{equation} 
     It is easy to verify the conclusion of the theorem through these calculations.
\end{proof}

For investigating composition operations, we need additional symbols. Let $\Z_d$ be the $n$-fold index, i.e.,
\begin{equation}\notag
    \Z_d : = \left\{\boldsymbol{\alpha} = \left(\alpha_1,\cdots,\alpha_d\right):\alpha_j \in \N , \ j = 1,\cdots,d\right\} ,
\end{equation}
The module of $\boldsymbol{\alpha} \in \Z_d$ is defined by 
\begin{equation}\notag
    |\boldsymbol{\alpha}|:=\alpha_1+\cdots+\alpha_d,
\end{equation} 
and for $ \boldsymbol{\beta} \in \Z_d$, we say $ \boldsymbol{\alpha} \prec \boldsymbol{\beta }$ if $ \alpha_j \le \beta_j$ holds for every $ 1\le j\le d$.
We take
\begin{equation}\notag
    \boldsymbol{\alpha}! = \prod_{j = 1}^d\alpha_j!.
\end{equation}
The derivative notation to be used is then 
\begin{equation}\notag
    D^{\boldsymbol{\alpha}}h(\boldsymbol{x}) :=  \frac{\partial^{|\boldsymbol{\alpha}|} }{\partial x_1^{\alpha_1} \ldots \partial x_d^{\alpha_d}}h\left(\boldsymbol{x}\right).
\end{equation}

\begin{definition} 
For $ M>0,B>0, k>0$ and $ s,l \in \N_{+}$, define
    \begin{equation}\notag
        \widehat G_p^{(l,s)}(M,B,k):= \left\{h\in C^d(\R^s\rightarrow\R^l):\sup_{|\boldsymbol{\alpha}|\le d} \left|D^{\boldsymbol{\alpha}}h(\boldsymbol{x})\right| \le M|\x|^k+B\right\} ,
    \end{equation}
    and 
    \begin{equation}\notag
        \widehat G_e^{(l,s)}(M,B,k):= \left\{h\in C^d(\R^s\rightarrow\R^l):\sup_{|\boldsymbol{\alpha}|\le d} \left|D^{\boldsymbol{\alpha}}h(\boldsymbol{x})\right| \le Be^{M|\x|^k}\right\} .
    \end{equation}
\end{definition}

The function class defined here is different from Definition~\ref{def:growth class}, since we need to take the partial derivatives of the composited function with respect to its arguments several times.

\begin{theorem}\label{them:composition}

       Assume integers $l,s,r \le d $. For the composition of two polynomial growth classes, there exist $ M_3>0 $ and $ B_3>0$, such that
        \begin{equation}
            \widehat G_p^{(l,s)}(M_1,B_1,k_1) \circ \widehat G_p^{(s,r)}(M_2,B_2,k_2) \subseteq \widehat G_p^{(l,r)}(M_3,B_3, k_1k_2)\notag.
        \end{equation}
        For an exponential growth class compositing a polynomial growth class, there exist $ M_3>0 $ and $ B_3>0$, such that
        \begin{equation}
            \widehat G_e^{(l,s)}(M_1,B_1,k_1) \circ \widehat G_p^{(s,r)}(M_2,B_2,k_2) \subseteq \widehat G_e^{(l,r)}(M_3,B_3, k_1k_2 )\notag.
        \end{equation}
        For a polynomial growth class compositing an exponential growth class, there exist $ M_3>0 $ and $ B_3>0$, such that
        \begin{equation}
            \widehat G_p^{(l,s)}(M_1,B_1,k_1) \circ \widehat G_e^{(s,r)}(M_2,B_2,k_2) \subseteq \widehat G_e^{(l,r)}(M_3,B_3, k_2 )\notag.
        \end{equation}
\end{theorem}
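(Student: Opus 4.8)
The plan is to reduce everything to the multivariate Faà di Bruno formula, which expresses any mixed derivative $D^{\boldsymbol{\alpha}}(f\circ g)$ with $|\boldsymbol{\alpha}|\le d$ as a finite sum of terms of the form $c\,(D^{\boldsymbol{\beta}}f)(g(\x))\prod_{i}D^{\boldsymbol{\gamma}_i}g_{j_i}(\x)$, where the multi-index $\boldsymbol{\beta}$ acting on $f$ satisfies $1\le|\boldsymbol{\beta}|\le|\boldsymbol{\alpha}|$, the inner factors obey $\sum_i|\boldsymbol{\gamma}_i|=|\boldsymbol{\alpha}|$ with each $|\boldsymbol{\gamma}_i|\ge1$, and the number of inner factors equals $|\boldsymbol{\beta}|$. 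The crucial structural point is that the combinatorial coefficients $c$ and the number of terms depend only on $d$ (we differentiate at most $d$ times), not on $f$ or $g$; hence it suffices to bound a single generic term and multiply by a $d$-dependent constant.

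First I would bound the outer factor. Since $\boldsymbol{\alpha}=\boldsymbol{0}$ is included in the supremum defining each growth class, the hypothesis on $g$ already controls $|g(\x)|$ itself: $|g(\x)|\le M_2|\x|^{k_2}+B_2$ in the polynomial case and $|g(\x)|\le B_2\eexp{M_2|\x|^{k_2}}$ in the exponential case. Substituting this into the growth condition for $f$ bounds $(D^{\boldsymbol{\beta}}f)(g(\x))$; for the exponential-over-polynomial case one gets $|(D^{\boldsymbol{\beta}}f)(g(\x))|\le B_1\eexp{M_1|g(\x)|^{k_1}}\le B_1\eexp{M_1(M_2|\x|^{k_2}+B_2)^{k_1}}$, whose exponent is of order $|\x|^{k_1k_2}$. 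Each inner factor $D^{\boldsymbol{\gamma}_i}g_{j_i}$ is then bounded directly by the growth condition on $g$.

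Next I would assemble the three cases. For the two cases whose target is an exponential class, the inner factors are either polynomially growing or are exponentials of order $k_2$, and a product of at most $d$ of them contributes a polynomial factor (resp.\ an exponential of order $k_2$) that is absorbed into the dominant exponential by enlarging the leading constant, using the elementary inequality $|\x|^{p}\eexp{M|\x|^{k}}\le B\eexp{(M+\eps)|\x|^{k}}$ already exploited in the proof of Theorem~\ref{them:operation}. This yields exactly the claimed orders $k_1k_2$ and $k_2$, with the exponential rate coming out as $M_1M_2^{k_1}$ (up to an arbitrarily small increment) and $M_2(k_1+d)$ respectively.

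The main obstacle is the pure polynomial case, where the target is again a polynomial class and there is no exponential to absorb the overhead. A crude application of the bounds above gives, for a term with $|\boldsymbol{\beta}|=q$ inner factors, an estimate of order $|\x|^{k_1k_2}\cdot|\x|^{qk_2}$, and maximising over $q\le d$ yields only polynomial order $(k_1+d)k_2$ rather than the advertised $k_1k_2$. Closing this gap is delicate: the sharp exponent would follow from the fact that each differentiation \emph{lowers} the effective degree — a Faà di Bruno term of total order $m$ scales like degree $k_1k_2-m$ under the genuine chain rule — but this is information not contained in the uniform bound $\sup_{|\boldsymbol{\alpha}|\le d}|D^{\boldsymbol{\alpha}}f|\le M_1|\x|^{k_1}+B_1$. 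I would therefore either impose a degree-graded hypothesis on the derivatives, or verify the exponent directly; indeed a quick check with an oscillatory inner map of order $k_2>1$ (take $g(x)=x^2$, $f(y)=y\cos y$, so that $(f\circ g)'(x)=2x\cos(x^2)-2x^3\sin(x^2)$ grows like $|\x|^{3}$) indicates that $k_1k_2$ cannot hold verbatim and should be replaced by $(k_1+d)k_2$ unless the degree-counting argument is made rigorous. This verification is where I expect the real work to lie.
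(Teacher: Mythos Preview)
Your approach via the multivariate Fa\`a di Bruno formula is exactly the one the paper takes: the paper's proof reduces to $l=s=1$, $r=d$, checks the zero-order bound $|f\circ g|$ by substitution, writes out the Fa\`a di Bruno expansion \eqref{eq:mulderivative}, and then says that each term is a sum/product of factors $(D^{\boldsymbol{\lambda}}f)\circ g$ and $D^{\boldsymbol{\ell}_j}g$ to which ``the same method as in Theorem~\ref{them:operation}'' applies. So structurally you and the paper agree.

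Where you go further is in actually tracking the exponent in the polynomial--polynomial case, and here you have in fact caught a genuine defect in the \emph{statement}, not just in your argument. The paper's proof never verifies the exponent: invoking Theorem~\ref{them:operation}(3) on the product $(D^{\boldsymbol{\lambda}}f)\circ g\cdot\prod_j D^{\boldsymbol{\ell}_j}g$ gives polynomial order $k_1k_2+|\boldsymbol{\lambda}|k_2\le (k_1+d)k_2$, precisely the bound you derived, and nothing in the hypothesis forces the derivatives of $f$ or $g$ to \emph{decrease} in degree. Your counterexample $f(y)=y\cos y\in\widehat G_p^{(1,1)}(M_1,B_1,1)$, $g(x)=x^2\in\widehat G_p^{(1,1)}(M_2,B_2,2)$, with $(f\circ g)'(x)=2x\cos(x^2)-2x^3\sin(x^2)$ of order $3>k_1k_2=2$, is correct and shows the first inclusion is false as written. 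The two exponential cases survive because, as you note, the polynomial or lower-order exponential overhead from the inner factors is absorbed into the target exponential class at the cost of enlarging $M_3$; only the polynomial target has no room to absorb the extra $dk_2$. So do not look for a missing trick to recover $k_1k_2$: the right fix is to replace $k_1k_2$ by $(k_1+d)k_2$ in the first assertion, which is what the paper's own machinery actually proves.
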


\begin{proof}
    It suffices to prove the case $ l = s = 1$ and $ r = d$. For any $$f \in  \widehat G_p^{(1,1)}(M_1,B_1,k_1),\ g\in  \widehat G_p^{(1,d)}(M_2,B_2,k_2), h\in \widehat G_e^{(1,d)}(M_2,B_2,d_2).$$ Note that
    \begin{equation}\notag
        |f\circ g| \le M_1|g(\x)|^{k_1} + B_1 \le M_1\left(M_2|\x|^{k_2}+B_2\right)^{k_1} + B_1 \le M_3|\x|^{k_1k_2} + B_3 .
    \end{equation}
    Similarly,
    \begin{align}
        |f\circ h|\le M_1|h(\x)|^{k_1} + B_1 &\le M_1\left(B_2\eexp{M_2|\x|^{k_2}}\right)^{k_1} + B_1 \notag\\
        &\le M_1B_2^{k_1}\eexp{k_1M_2|\x|^{k_2}} + B_1\notag\\
        &\le \left( M_1B_2^{k_1} + B_1\right)\eexp{k_1M_2|\x|^{k_2}}\notag .
    \end{align}

    Note that for any fixed $\boldsymbol{\alpha} \in \Z_d$, ever term of $D^{\boldsymbol{\alpha}}(f\circ g)$ can be written as the combination of 
    \begin{equation}\notag
    (D^{\boldsymbol{\lambda}}f)\circ g,\ f,\ D^{\boldsymbol{\ell_j}}g,\ g,\ \ \ |\boldsymbol{\lambda}|,\ |\boldsymbol{\ell_j}| \le |\boldsymbol{\alpha}|
    \end{equation}
    under the operators $ + $, $ \times$ and scalar multiplication. More precisely, by multivariate Faa di Bruno formula (see Theorem 2.1 in~\cite{cons:1996}), we have 
    \begin{equation}\label{eq:mulderivative}
        D^{\boldsymbol{\alpha}}\left( f\circ g\right)=\sum_{1 \leq|\boldsymbol{\lambda}| \leq |\boldsymbol{\alpha}|} (D^{\boldsymbol{\lambda}}f)\circ g \sum_{s=1}^{|\boldsymbol{\alpha}|} \sum_{p_s(\boldsymbol{\alpha}, \boldsymbol{\lambda})}(\boldsymbol{\alpha} !) \prod_{j=1}^s \frac{\left[D^{\boldsymbol{\ell}_j}g\right]^{\mathbf{q}_j}}{\left(\mathbf{q}_{j} !\right)\left[\boldsymbol{\ell}_{j} !\right]^{|\mathbf{q}_j|}},
    \end{equation}
    where 
    \begin{equation}
    \begin{aligned}
    p_s(\boldsymbol{\alpha}, \boldsymbol{\lambda})=\big\{&\left(\mathbf{q}_1, \ldots, \mathbf{q}_s ; \boldsymbol{\ell}_1, \ldots, \boldsymbol{\ell}_s\right):\left|\mathbf{q}_i\right|>0,\\
    &0 \prec \boldsymbol{\ell}_1 \prec \cdots \prec \boldsymbol{\ell}_s, \sum_{i=1}^s \mathbf{q}_i=\boldsymbol{\lambda} \text { and } \sum_{i=1}^s\left|\mathbf{q}_i\right| \boldsymbol{\ell}_i=\boldsymbol{\alpha}\big\}.\notag
    \end{aligned}
    \end{equation} 
    The functions in~\eqref{eq:mulderivative} are all in the corresponding growth classes. Using the same method in Theorem~\ref{them:operation}, one can prove the desired results.    
\end{proof}

\end{document}